\newcommand*\linenomathpatch[1]{%
   \expandafter\pretocmd\csname #1\endcsname {\linenomath}{}{}%
   \expandafter\pretocmd\csname #1*\endcsname{\linenomath}{}{}%
   \expandafter\apptocmd\csname end#1\endcsname {\endlinenomath}{}{}%
   \expandafter\apptocmd\csname end#1*\endcsname{\endlinenomath}{}{}%
 }
\newcommand*\linenomathpatchAMS[1]{%
    \expandafter\pretocmd\csname #1\endcsname {\linenomathAMS}{}{}%
    \expandafter\pretocmd\csname #1*\endcsname{\linenomathAMS}{}{}%
    \expandafter\apptocmd\csname end#1\endcsname {\endlinenomath}{}{}%
    \expandafter\apptocmd\csname end#1*\endcsname{\endlinenomath}{}{}%
}
\let\linenomathAMS\linenomathWithnumbers
\patchcmd\linenomathAMS{\advance\postdisplaypenalty\linenopenalty}{}{}{}
\let\linenomathAMS\linenomathNonumbers
\newtheorem{theorem}             {Theorem}[section]
\newtheorem{lemma}     [theorem] {Lemma}
\newtheorem{definition}[theorem] {Definition}   
\newtheorem{proposition}[theorem] {Proposition}   
\newtheorem{claim}[theorem] {Claim}
\newtheorem{remark}[theorem] {Remark}
\newtheorem{proto-theo}[theorem] {Proto-Theorem}   
\def\canarrow{\xrightarrow[{\raisebox{.5mm}[1mm][0mm]{$\scriptstyle \rm
			$}}]{\raisebox{0.0mm}[0mm]{$\scriptstyle \rm can$}}}
\def\wcanarrow{\xrightarrow[{\raisebox{.5mm}[1mm][0mm]{$\scriptstyle \rm
			$}}]{\raisebox{0.0mm}[0mm]{$\scriptstyle \rm wcan$}}}
\def\bbn{{\mathbb N}}
\def\NN{{\mathbb N}}
\def\bfg{{\mathbf G}}
\def\call{{\mathcal L}}
\let\cH\calh
\let\cP\calp
\let\cG\calg
\let\cC\calc
\let\cL\call
\let\epsilon\varepsilon
\let\eps\epsilon
\def\Pr{\mathop{\mathbb{P}}\nolimits}
\def\Ex{\mathop{\mathbb{E}}\nolimits}
\def\Var{\mathop{\text{\rm Var}}\nolimits}
\def\ex{\mathop{\text{\rm ex}}\nolimits}
\long\def\OLD#1{}
\let\subset\subseteq
\begin{document}
	\pagestyle{plain}
	\thispagestyle{empty}
	\footskip=30pt
	\shortdate
	\settimeformat{ampmtime}
	\onehalfspacing

	\title[A canonical Ramsey theorem for even cycles in random graphs]%
	{A canonical Ramsey theorem for even cycles in random graphs}
	
	\author[José~D.~Alvarado]{José~D.~Alvarado}
        \address[J. D. Alvarado]{Faculty of Mathematics and Physics, University of Ljubljana, Jadranska 19, Ljubljana, Slovenia}
        \email{jose.alvarado@fmf.uni-lj.si}
	
	\author[Yoshiharu Kohayakawa]{Yoshiharu Kohayakawa}
	\address[Y. Kohayakawa]{Instituto de Matem\'atica e Estat\'{\i}stica \\Universidade de 
		S\~ao Paulo, Rua do Mat\~ao 1010\\05508--090~S\~ao Paulo, Brazil}
	\email{yoshi@ime.usp.br}
	
	\author[Patrick Morris]{Patrick Morris}
	\address[P. Morris]{Departament de Matem\`atiques, Universitat
          Polit\`ecnica de Catalunya (UPC), Carrer de Pau Gargallo 14, 08028  Barcelona, Spain}
	\email{pmorrismaths@gmail.com}

	\author[Guilherme~O.~Mota]{Guilherme~O.~Mota}
	\address[G. O. Mota]{Instituto de Matem\'atica e Estat\'{\i}stica \\Universidade de 
		S\~ao Paulo, Rua do Mat\~ao 1010\\05508--090~S\~ao Paulo, Brazil}
	\email{mota@ime.usp.br}
		
	\thanks{J. D. Alvarado was partially supported by FAPESP
		(2020/10796-0). Y. Kohayakawa was partially supported by CNPq
		(407970/2023-1, 420838/2025-2, 315258/2023-3) and FAPESP
		(2023/03167-5). P. Morris was supported by the Deutsche Forschungsgemeinschaft (DFG, German
Research Foundation) Walter Benjamin programme - project number 504502205 and by the European Union's Horizon Europe   Marie Sk{\l}odowska-Curie grant RAND-COMB-DESIGN - project number
101106032 {\euflag}. G. O. Mota was partially supported by CNPq
		(315916/2023-0, 420838/2025-2 ) and FAPESP (2024/13859-4, 2023/03167-5). This study was financed in part by CAPES, Coordenação
		de Aperfeiçoamento de Pessoal de Nível Superior, Brazil, Finance
		Code~001.  FAPESP is the S\~ao Paulo Research Foundation.  CNPq is
		the National Council for Scientific and Technological Development of
		Brazil.}

	\date{\today, \currenttime}

	\begin{abstract}
		The celebrated canonical Ramsey theorem of Erd\H{o}s and Rado implies that for $2\leq k\in \NN$, any colouring of the edges of $K_n$ with $n$ sufficiently large gives a copy of $C_{2k}$ which has one of three canonical colour patterns: monochromatic, rainbow or lexicographic. In this paper we show that if $p=\omega(n^{-1+1/(2k-1)}\log n)$, then $\bfg(n,p)$ will asymptotically almost surely also have the property that any colouring of its edges induces canonical copies of $C_{2k}$. This determines the threshold for the canonical Ramsey property with respect to even cycles, up to a $\log$ factor. 
	\end{abstract}
	
	\maketitle
	\onehalfspacing

	\section{Introduction}
	\label{sec:Introduction}
	
	Ramsey theory, in broad terms, is the mathematical study of
        finding inevitable order in a large chaotic universe. In this
        paper, our setting will be to consider all edge colourings of
        some large host graph $G$ and the inevitable order will be
        copies of some fixed graph $H$ in $G$ that exhibit certain
        \emph{canonical} colour patterns.  
	The colour patterns of interest are the following.
	
	\begin{definition}[Canonical colour pattern] \label{def:canonical copy}
		Let $H$ be a graph, $\sigma$ an ordering of $V(H)$ and  $\chi:E(H)\rightarrow \bbn$ an edge colouring of $H$. We define  $H$ coloured by $\chi$ to be \emph{canonical} (with respect to $\sigma$) if  one of the following holds
		\begin{enumerate}[{\rm(i)}]
			\item \label{item:mono} $H$ is \emph{monochromatic}: $\chi(e)=\chi(f)$ for all $e,f\in E(H)$;
			\item \label{item:rbow} $H$ is \emph{rainbow}: $\chi(e)\neq \chi(f)$ for all $e\neq f\in E(H)$;
			\item \label{item:lex} $H$ is \emph{lexicographic} (with respect to $\sigma$): there exists an injective assignment of colours $\phi:V(H)\rightarrow \bbn$ such that for every edge $e=uv\in E(H)$ with $u<_\sigma v$, we have that $\chi(e)=\phi(u)$. 
		\end{enumerate}
	\end{definition}
	
	Examples of all the canonical patterns of $C_6$ are shown in
        Figure~\ref{fig:canonical_c6} where the colourings
        $(iii.1)-(iii.4)$ correspond to lexicographic colourings with
        respect to different vertex orders. Note that certain distinct
        vertex orderings  can give the same lexicographic colour
        pattern.

	The canonical colourings are candidates for ``unavoidable''  a collection of colourings of $H$,   one of which necessarily occurs when colouring certain host graphs $G$. Note that all three types of canonical colouring are necessary in such an unavoidable collection. Indeed, by colouring a host graph $G$ according to a canonical pattern (that is, one of the patterns \ref{item:mono}, \ref{item:rbow} or \ref{item:lex} in Definition \ref{def:canonical copy}), one can see that any copy of $H$ in $G$ will be coloured according to the same colour pattern, and so this pattern cannot be omitted from our desired unavoidable collection. As we will see, the canonical colourings of $H$ are also sufficient to form an unavoidable collection of colour patterns when colouring certain host graphs $G$.

	\begin{definition}[The canonical Ramsey
          property] \label{def:canonical ramsey property} For graphs
          $G$ and $H$, we say $G$ has the \emph{$H$-canonical Ramsey
            property}, and write $G\canarrow H$, if for every edge
          colouring $\chi:E(G)\rightarrow \bbn$ and every
          ordering~$\sigma$ of~$V(H)$, there is a copy of~$H$ in~$G$
          which is canonical with respect to~$\sigma$ when coloured
          by~$\chi$.
	\end{definition}

	\begin{figure}[h]
		\centering
		\captionsetup{width=.92\linewidth}
		\includegraphics[scale=0.9]{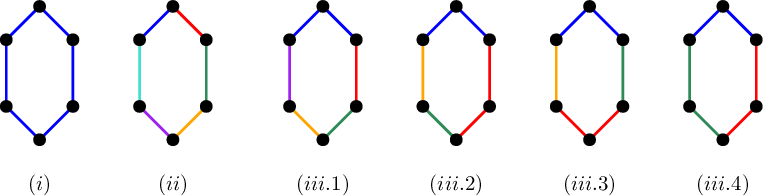}
		\caption{   \label{fig:canonical_c6} The canonical colourings of $C_6$.} 
	\end{figure}
	
	We remark that our definition for the canonical Ramsey property requires that we find either a monochromatic copy of $H$, a rainbow copy of $H$ or \emph{all possible} lexicographic copies of $H$. For example, if $G\canarrow C_6$, then  if we do not find copies of $C_6$ with colour patterns $(i)$ or $(ii)$ in a colouring of $G$ then we will find copies of $C_6$ with \emph{all} the colour patterns $(iii.1)-(iii.4)$  in Figure \ref{fig:canonical_c6}. This is the strongest possible notion of a canonical Ramsey property for colour patterns that one could ask for, as discussed in more detail after the statement of Theorem \ref{thm:canon RG even-cycles}.
	
	The following is a direct consequence of the celebrated  \emph{canonical Ramsey theorem}, proved by Erd\H{o}s and Rado~\cite{ER50} in 1950.
	
	\begin{theorem}[\!\!\cite{ER50}] \label{thm:ERcanRam}
		For a graph $H$, one has that $K_n\canarrow H$ for all sufficiently large $n\in \NN$. 
	\end{theorem}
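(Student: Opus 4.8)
The plan is to derive Theorem~\ref{thm:ERcanRam} directly from the Erd\H{o}s--Rado canonical Ramsey theorem in its original \emph{ordered} form, which I recall as follows. For every $h\in\NN$ there is an integer $N_0=N_0(h)$ such that, for every $n\geq N_0$, every colouring $\chi\colon E(K_n)\to\NN$, and every linear order on $V(K_n)$, there is an $h$-element set $S=\{s_1<\dots<s_h\}$ on which at least one of the following four \emph{canonical patterns} occurs: \textbf{(a)} $\chi$ is constant on the edges inside $S$; \textbf{(b)} $\chi$ is injective on the $\binom h2$ edges inside $S$; \textbf{(c)} there is an injection $c\colon\{1,\dots,h-1\}\to\NN$ with $\chi(\{s_i,s_j\})=c(\min\{i,j\})$ for all $i\neq j$; or \textbf{(d)} there is an injection $d\colon\{2,\dots,h\}\to\NN$ with $\chi(\{s_i,s_j\})=d(\max\{i,j\})$ for all $i\neq j$. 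Here (c) and (d) are the two lexicographic colourings attached to the two linear orders of $S$, and the injectivity of $c$ and $d$ is an integral part of the statement.

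With this in hand, fix a graph $H$, put $h=|V(H)|$, and let $n\geq N_0(h)$; I claim $K_n\canarrow H$. Let $\chi\colon E(K_n)\to\NN$ be arbitrary and let $\sigma$ be an arbitrary ordering of $V(H)$, writing $v_1<_\sigma\dots<_\sigma v_h$ for its vertices in $\sigma$-order. Applying the canonical Ramsey theorem to $\chi$ yields an $h$-set $S=\{s_1<\dots<s_h\}$ realising one of (a)--(d); since $|S|=h=|V(H)|$, the clique on $S$ contains a copy of $H$, and it remains only to choose the embedding $\psi\colon V(H)\to S$ suitably in each case. If (a) or (b) holds, take $\psi(v_i)=s_i$: the copy $\psi(H)$ is then monochromatic, respectively rainbow, hence canonical with respect to $\sigma$ (indeed with respect to every ordering of $V(H)$), matching item~\ref{item:mono} (respectively~\ref{item:rbow}) of Definition~\ref{def:canonical copy}.

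If (c) holds, again take $\psi(v_i)=s_i$, set $\phi(v_i)=c(i)$ for $i<h$, and let $\phi(v_h)$ be any colour distinct from $c(1),\dots,c(h-1)$; then $\phi\colon V(H)\to\NN$ is injective, and for every edge $v_iv_j\in E(H)$ with $v_i<_\sigma v_j$ (so $i<j$) we have $\chi(\psi(v_i)\psi(v_j))=c(\min\{i,j\})=c(i)=\phi(v_i)$, so $\psi(H)$ is lexicographic with respect to $\sigma$, matching item~\ref{item:lex} of Definition~\ref{def:canonical copy}. If instead (d) holds, embed $H$ into $S$ \emph{backwards}, i.e.\ put $\psi(v_i)=s_{h+1-i}$; then for an edge $v_iv_j\in E(H)$ with $v_i<_\sigma v_j$ the endpoint of $\psi(v_i)\psi(v_j)$ carrying the larger index is $\psi(v_i)=s_{h+1-i}$, so $\chi(\psi(v_i)\psi(v_j))=d(h+1-i)$, and setting $\phi(v_i)=d(h+1-i)$ for $i<h$ together with a fresh colour for $\phi(v_h)$ again produces an injective $\phi$ with $\chi(\psi(v_i)\psi(v_j))=\phi(v_i)$, so $\psi(H)$ is once more lexicographic with respect to $\sigma$. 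In every case we have exhibited a copy of $H$ in $K_n$ that is canonical with respect to $\sigma$; as $\chi$ and $\sigma$ were arbitrary, $K_n\canarrow H$.

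Since the Erd\H{o}s--Rado theorem is used here as a black box, there is no substantial obstacle; the only point requiring care is the bookkeeping in cases (c) and (d)---namely that the \emph{smaller-endpoint} pattern on the ordered set $S$ realises a $\sigma$-lexicographic copy via the order-preserving embedding, while the \emph{larger-endpoint} pattern realises one via the order-reversing embedding---together with the observation that the distinctness of the colours $c(i)$ (respectively $d(j)$) furnished by the canonical Ramsey theorem is exactly what makes the vertex-colouring $\phi$ of Definition~\ref{def:canonical copy}\ref{item:lex} injective. One could alternatively avoid the citation and run the standard iterated-pigeonhole proof of the canonical Ramsey theorem directly on $K_n$ while tracking vertex orders, but invoking \cite{ER50} is shorter and makes the ``direct consequence'' transparent.
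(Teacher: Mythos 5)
Your proof is correct and takes essentially the same route as the paper, which likewise deduces the statement from the Erd\H{o}s--Rado canonical Ramsey theorem applied to $K_{v(H)}$: monochromatic/rainbow canonical cliques yield monochromatic/rainbow copies of $H$, while a lexicographic (min- or max-determined) clique contains \emph{all} $\sigma$-lexicographic copies of $H$ via order-preserving or order-reversing embeddings. The paper only sketches this in the discussion following the theorem; your write-up simply makes the embedding bookkeeping and the injectivity of $\phi$ explicit.
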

	
	Erd\H{o}s and Rado worked only on the case when $H=K_m$ for
        some $m\in \NN$ and they also proved analogues for hypergraph
        cliques. In the case that $H=K_m$, there is just one
        lexicographic colour pattern possible.  Moreover,  
	we note that a  copy of $K_{v(H)}$ that is lexicographic necessarily contains all possible lexicographic copies of $H$. This observation along with the fact that one can find monochromatic/rainbow copies of $H$ in monochromatic/rainbow copies of $K_{v(H)}$ gives that Theorem \ref{thm:ERcanRam} does indeed follow directly from the canonical Ramsey theorem of Erd\H{o}s and Rado \cite{ER50}. We also remark that
	Erd\H{o}s and Rado actually worked in a different setting focusing on ordered embeddings of $K_m$ instead of colour patterns (which nonetheless implies Theorem \ref{thm:ERcanRam} for $H=K_m$). Following \cite{AJ05,JW04}, we  choose instead to work with colour patterns and find this approach more natural when considering $H$ that are not complete. We refer to~\cite[Section 1.4]{AKMM-1} for a more detailed discussion of alternative characterisations of the canonical Ramsey property.

	Theorem \ref{thm:ERcanRam} built on the classical Ramsey
        setting where only a bounded number of colours is used. We say that
        $G$ has the \emph{$r$-Ramsey property with respect to~$H$}, denoted $G\to(H)_r$, if any
        $r$-colouring of the edge set of $G$ results in a
        monochromatic copy of $H$. Ramsey's theorem \cite{R30}, from
        which Ramsey theory inherits its name, states that for any $H$
        and $r\in \NN$, if $n$ is sufficiently large, then
        $K_n\to(H)_r$. Thus Theorem \ref{thm:ERcanRam} showed that,
        although one can no longer guarantee monochromatic copies of
        $H$ when the size of the colour palette is allowed to grow
        with $n$, we can still have a strong Ramsey-type statement
        guaranteeing one of a small collection of unavoidable colour
        patterns.

	\subsection{Ramsey properties of random graphs} \label{sec:Ramsey random} Extending Ramsey's theorem in an orthogonal direction, a prominent theme  has been to determine the \emph{threshold} for Ramsey properties in random graphs. Here and throughout, we denote by $\bfg(n,p)$ the binomial random graph with edge probability $p=p(n)$ and  we say that a function
	$\hat p = \hat p(n)$ is the \emph{threshold} for a monotone increasing
	graph property $\cP$ if
	\[
	\lim_{n\to\infty}\Pr(\bfg(n,p)\text{ satisfies }\cP) =
	\begin{cases}0&\text{if }p =o( \hat p), \\
		1&\text{if }p =\omega( \hat p).
	\end{cases}
	\]
	We refer to $\hat p$ as \textit{the} threshold for $\cP$, although it
	is not uniquely defined but rather defined up to a choice of constant.

	Initial interest in studying Ramsey thresholds came from the
        desire to obtain \emph{sparse} Ramsey graphs. Indeed, Ramsey's
        theorem gives that $K_n\to(H)_r$ for all large enough
        $n\in \NN$ but in fact one can replace $K_n$ by much sparser
        $n$-vertex graphs $G$. In fact, as shown by R\"odl and
        Ruci\'nski in a seminal series of papers~\cite{RR93,RR94,RR95}
        in the 90s, almost all graphs have the Ramsey property and
        this remains true when focusing on much sparser graphs. For
        all $H$ and $r\in \NN$, they completely determined the
        threshold for the $r$-Ramsey property, showing that it is
        governed by the parameter~$m_2(H)$ of $H$ known as its
        \emph{maximum $2$-density}.
	
	\begin{theorem}[Rödl--Ruci\'nski~\cite{RR93,RR94,RR95}]
		\label{RR-original}
		Let $r\geq 2$ be an integer and $H$ be a  graph with at least two edges which is
		not a star forest. Then $n^{-1/m_2(H)}$ is the threshold for the
		property $\bfg(n,p)\to(H)_r$, where $m_2(H)$ is the maximum $2$-density of $H$, defined by \[
		m_2(H):=\max\left\{\tfrac{e(F)-1}{v(F)-2}:F\subseteq H,\, v(F)>2
		\right\}.
		\]
	\end{theorem}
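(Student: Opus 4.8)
The statement splits into two halves. The \emph{$0$-statement} --- if $p=o(n^{-1/m_2(H)})$ then a.a.s.\ $\bfg(n,p)\not\to(H)_r$ --- is the easy direction, proved by producing a good colouring. I would first fix a subgraph $F\subseteq H$ realising the maximum in the definition of $m_2(H)$ and chosen to be \emph{strictly $2$-balanced} (every proper subgraph on at least three vertices is strictly sparser); since a monochromatic copy of $H$ contains a monochromatic copy of $F$, it suffices to find a.a.s.\ an $r$-colouring of $\bfg(n,p)$ with no monochromatic $F$. At this density the expected number of copies of $F$ through a fixed edge has order $(np^{m_2(F)})^{v(F)-2}=o(1)$, and strict $2$-balancedness makes the analogous bound hold for extending any partial copy of $F$. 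A first-moment computation over subgraphs of bounded order then shows that a.a.s.\ $\bfg(n,p)$ is ``locally sparse'', so that each edge lies in only $O(1)$ copies of $F$; the copies of $F$ thus form a bounded-degree hypergraph on $E(\bfg(n,p))$ with $o(e(\bfg(n,p)))$ edges, and a uniformly random $r$-colouring repaired by the Lovász Local Lemma (or by a direct alteration argument) contains no monochromatic $F$.

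The \emph{$1$-statement} --- if $p=\omega(n^{-1/m_2(H)})$ then a.a.s.\ $\bfg(n,p)\to(H)_r$ --- carries essentially all of the difficulty, and I would obtain it by transferring the deterministic Ramsey theorem to the sparse random setting via the hypergraph container method. The deterministic input is Ramsey's theorem in \emph{supersaturated} form: for $N$ large, every $r$-colouring of $E(K_N)$ yields $\Omega(N^{v(H)})$ monochromatic copies of $H$ (standard averaging from $K_N\to(H)_r$). On the random side, let $\cH_n$ be the $e(H)$-uniform hypergraph on vertex set $E(K_n)$ whose edges are the copies of $H$, so that $H$-free graphs on $[n]$ are exactly the independent sets of $\cH_n$. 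The degree and co-degree profile of $\cH_n$ lets one apply the container theorem at the scale $n^{2-1/m_2(H)}$, producing a family $\calc$ of at most $\exp\!\big(O(n^{2-1/m_2(H)}\log n)\big)$ ``container'' graphs, each spanning at most $\eps n^{v(H)}$ copies of $H$, such that every $H$-free graph on $[n]$ sits inside some $C\in\calc$ together with a ``fingerprint'' subgraph of only $O(n^{2-1/m_2(H)})$ edges. With this in hand the plan is: (i) a second-moment / Janson lower bound shows that a.a.s.\ $\bfg(n,p)$ has $\Omega(n^{v(H)}p^{e(H)})$ copies of $H$, and a bound on the copies through a fixed edge shows this survives deletion of any $o(n^2p)$ edges; (ii) a concentration estimate shows that a.a.s., \emph{simultaneously} for every $C\in\calc$, the graph $\bfg(n,p)\cap C$ spans at most $2\eps n^{v(H)}p^{e(H)}$ copies of $H$; (iii) if $\bfg(n,p)\not\to(H)_r$ then its colour classes $G_1,\dots,G_r$ are $H$-free, so each lies in a container $C_i$ with fingerprint $T_i\subseteq G_i$, and as $T:=\bigcup_i T_i$ has only $o(n^2p)$ edges, by (i) most copies of $H$ in $\bfg(n,p)$ avoid $T$ and hence lie inside $\bigcup_i C_i$; combining the sparsity of the individual containers from (ii) with the supersaturated Ramsey input forces some colour class $G_i$ to contain a monochromatic copy of $H$, a contradiction.

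The crux --- and the step I expect to be the main obstacle --- is the combination of (ii) with the final transference step of (iii): one must control the number of copies of $H$ in $\bfg(n,p)\cap C$ with failure probability $\exp\!\big(-\omega(n^{2-1/m_2(H)}\log n)\big)$, small enough to beat the union bound over the $\exp(O(n^{2-1/m_2(H)}\log n))$ containers. This is exactly the regime in which Janson-type deviation inequalities for subgraph counts are strong enough, and it is here that $p=\omega(n^{-1/m_2(H)})$ (with the usual logarithmic slack) is forced: below this density the relevant counts fluctuate too much and the union bound collapses. The hypothesis that $H$ is not a star forest (and has at least two edges) is what guarantees $m_2(H)\ge 1$ and that $m_2(H)$ --- rather than the appearance threshold of a sparse subgraph of $H$ --- is the governing parameter, so that $n^{2-1/m_2(H)}$ is the correct scale throughout. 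An alternative to the container route, closer to the original argument of Rödl and Ruci\'nski, is to build the Ramsey-forcing substructure inside $\bfg(n,p)$ by hand via a deterministic embedding lemma together with a careful partition of the edge set and second-moment estimates; this avoids containers but is considerably more delicate, particularly in verifying the sparse-regularity-type hypotheses it rests on.
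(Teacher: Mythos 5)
The paper does not prove Theorem~\ref{RR-original}: it is quoted from~\cite{RR93,RR94,RR95}, with the container-based reproof of the $1$-statement of Nenadov and Steger~\cite{NS16} only sketched in Section~\ref{sec:proof overview}, so I compare your proposal with those known arguments. Your $1$-statement plan is in the Nenadov--Steger spirit, but step (ii) is a step that would fail: you require, uniformly over the $\exp\big(O(n^{2-1/m_2(H)}\log n)\big)$ containers $C$, that the number of copies of $H$ in $\bfg(n,p)\cap C$ stays below twice its mean with failure probability $\exp\big(-\omega(n^{2-1/m_2(H)}\log n)\big)$. That is an \emph{upper-tail} event for a subgraph count; Janson-type inequalities control only lower tails, and upper tails are far weaker (for triangles at $p=n^{-1/2+o(1)}$ the upper-tail rate is of order $n^2p^2\log(1/p)\approx n\log n$, nowhere near the $n^{3/2}\log n$ your union bound needs), so this step cannot be pushed through. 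The actual argument avoids any upper tail: each container is almost $H$-free as a \emph{dense} graph, supersaturated Ramsey forces the union of any $r$ containers to miss $\Omega(n^2)$ edges of $K_n$, and one only needs $\bfg(n,p)$ to hit every such fixed edge set (probability $1-\exp(-\Omega(n^2p))$), with the union bound taken over fingerprint tuples $T\subseteq \bfg(n,p)$ weighted by $p^{|T|}$. That bookkeeping is also exactly what removes the logarithm: your blunt union bound over containers against $\exp(-cn^2p)$ only works for $p\geq n^{-1/m_2(H)}\,\mathrm{polylog}\,n$, which does not prove the stated threshold, since $p=\omega(n^{-1/m_2(H)})$ includes, say, $p=n^{-1/m_2(H)}\log\log n$, where $\exp(-cn^2p)$ does not beat the number of containers.

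The $0$-statement is also not "the easy direction" (the paper itself stresses that both statements required intricate proofs), and your sketch has two concrete failures. First, the claim that a.a.s.\ every edge lies in $O(1)$ copies of $F$ is false near the threshold: already for $H=F=K_3$ and $p=n^{-1/2}/\log n$ some edges lie in $\omega(1)$ triangles, so the dependency degree in your Local Lemma application is unbounded while the probability that a fixed copy of $F$ is monochromatic under a random colouring is the constant $r^{1-e(F)}$, and the LLL condition fails. Second, the reduction to avoiding monochromatic $F$ breaks when $m_2(H)=1$ (i.e.\ $H$ a forest that is not a star forest): then the strictly $2$-balanced $F$ is $P_3$, and avoiding monochromatic $P_3$ means properly edge-colouring with $r$ colours, which is impossible once some vertex has degree larger than $r$ --- a.a.s.\ the case for $p$ only slightly below $1/n$ --- so one must colour with respect to $H$ itself in that regime. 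More generally, the genuine content of the $0$-statement, even for $p=o(n^{-1/m_2(H)})$, is the deterministic fact that the (locally sparse but possibly clustered) union of copies of $F$ admits an $r$-colouring with no monochromatic copy; this is precisely the intricate part of R\"odl--Ruci\'nski (see also~\cite{NPSS17}), and your sketch assumes it away rather than proving it.
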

	
	The work of R\"odl and Ruci\'nski was highly celebrated with
        both the $0$-statement and $1$-statement needing intricate
        proofs.  More recently, Nenadov, Person, \v{S}kori\'{c} and
        Steger~\cite{NPSS17} obtained several related
        $0$-statement results, and Nenadov and Steger~\cite{NS16}
        showed how to reprove the $1$-statement using the theory of
        hypergraph containers~\cite{BMS14,ST15}, leading to an elegant
        and concise proof.
	
	\subsection{Canonical Ramsey property for even cycles in
          random graphs} \label{sec:our results} With regards to
        canonical Ramsey properties of random graphs, until recently
        very little was known. Due to Theorem \ref{RR-original}, if
        $H$ is a graph which is not a star forest and
        $p=o(n^{-1/m_2(H)})$, then asymptotically almost surely
        (a.a.s.\ for short) $\bfg\sim \bfg(n,p)$ can be $2$-coloured,
        by a colouring $\chi$ say, avoiding monochromatic copies of
        $H$. For all such $H\neq K_3$, there is some ordering $\sigma$
        of $V(H)$ such that the lexicographic colouring of $H$ with
        respect to $\sigma$ uses more than 2 colours. Hence the
        colouring $\chi$ of $\bfg$ avoids at least one lexicographic
        pattern as well as both the monochromatic and rainbow
        patterns. This thus gives that the threshold for the property
        $\bfg(n,p) \canarrow H$ is at least $n^{-1/m_2(H)}$.
	
	Our main theorem shows that this lower bound is tight up to a
        logarithmic factor in the case that $H=C_{2k}$ is an even
        cycle, noting that $m_2(C_{2k})^{-1}=1 - 1/(2k-1)$ for $2\leq
        k\in \NN$.
	
	\begin{theorem}\label{thm:canon RG even-cycles}
		Let $k\geq 2$ be an integer. 
		If $p= \omega\left(
		n^{-1+{1}/{(2k-1)}} \, \log{n}\right)$,  then a.a.s.\
		\[
		\bfg(n,p) \canarrow C_{2k}.
		\]
	\end{theorem}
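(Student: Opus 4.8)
The plan is to fix an ordering $\sigma$ of $V(C_{2k})$ and show that a.a.s.\ every edge colouring $\chi$ of $\bfg=\bfg(n,p)$ yields a copy of $C_{2k}$ that is monochromatic, rainbow, or lexicographic with respect to $\sigma$; since there are only finitely many orderings, a union bound over $\sigma$ then gives $\bfg\canarrow C_{2k}$. Let $D$ be the acyclic orientation of $C_{2k}$ obtained by orienting each edge from its $\sigma$-smaller to its $\sigma$-larger endpoint, and let $j$ with $1\le j\le k$ be its number of sources. A $\sigma$-lexicographic copy of $C_{2k}$ is precisely a copy in which, at each of the $j$ source vertices, the two incident edges receive a common colour and the resulting $2k-j$ colours are pairwise distinct; thus at one extreme ($j=1$) we look for a copy whose colour pattern is rainbow except for a single pair of adjacent equal edges, and at the other ($j=k$) for a copy whose pattern is $c_1c_1c_2c_2\cdots c_kc_k$ around the cycle.

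The first ingredient is a sparse Tur\'an-type statement for $C_{2k}$: since $p=\omega\big(n^{-1/m_2(C_{2k})}\big)$, a.a.s.\ every subgraph of $\bfg$ with at least $\delta p\binom n2$ edges contains a copy of $C_{2k}$. This is the random Tur\'an/transference phenomenon underlying the $1$-statement of the ordinary Ramsey theorem for even cycles, and it follows from the method of hypergraph containers (or from the work of Conlon--Gowers and Schacht). In particular, if $\chi$ has no monochromatic $C_{2k}$ then each colour class is $C_{2k}$-free and hence spans fewer than $\delta p\binom n2$ edges; we call such a $\chi$ \emph{$\delta$-balanced}. It therefore suffices to prove the following anti-Ramsey--type statement: for a suitable $\delta=\delta(k)>0$, a.a.s.\ every $\delta$-balanced colouring of $\bfg$ admits a rainbow or a $\sigma$-lexicographic copy of $C_{2k}$.

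To prove this I would run a hypergraph container argument adapted to the coloured structure, reducing colourings with no rainbow and no $\sigma$-lexicographic $C_{2k}$ to a bounded family of ``templates'' recording the colour-equality pattern of $\chi$ on a near-spanning subgraph; each template is sparse in rainbow and $\sigma$-lexicographic copies, after which the lower bound on $p$ lets one union-bound the (exponentially small) probability that $\bfg$ embeds into a template against the (exponentially many) templates. The combinatorial core behind this sparseness is, roughly, a dichotomy. If the colouring is ``close to proper'', in that few vertices host a monochromatic pair of incident edges, then $\bfg$ contains many rainbow paths with $2k-1$ edges between pairs of vertices $x,y$, and choosing one whose endpoints are joined by an edge of $\bfg$ that does not repeat an interior colour produces a rainbow $C_{2k}$ — this is exactly a (cycle) anti-Ramsey estimate that is available because $p$ lies above the relevant threshold. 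If instead the colouring has many monochromatic incident pairs, one locates $j$ of them in the cyclic positions prescribed by $D$ and, using that every colour class is small, completes them through otherwise-rainbow segments into a copy of $C_{2k}$ realising exactly the $\sigma$-lexicographic pattern. In both branches the existence statement is ultimately driven by a supersaturation/counting estimate for $C_{2k}$ in $\bfg$, valid because $p=\omega(n^{-1/m_2(C_{2k})}\log n)$.

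I expect the anti-Ramsey/lexicographic step to be the main obstacle. The difficulty is that the colouring is adversarial and may interpolate between the two regimes above — proper-like on one portion of $\bfg$ and lexicographic-like on another — so one needs a single robust argument, most naturally a container argument, that controls the graph and its colour-equality structure simultaneously and handles every acyclic orientation $D$ at once; carrying out the corresponding container and supersaturation bookkeeping for $C_{2k}$ right at the threshold, which is also where the extra $\log n$ factor is absorbed, is the technical heart of the proof.
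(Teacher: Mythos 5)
Your opening reductions are fine: fixing an ordering $\sigma$ and union bounding over the finitely many orderings is legitimate, your characterisation of $\sigma$-lexicographic copies via the sources of the acyclic orientation is correct, and disposing of the monochromatic case through the relative Tur\'an theorem (Theorem~\ref{thm:turan}) matches the paper. The genuine gap is that the entire core of the argument is left as a plan, and the specific plan you sketch runs into exactly the obstacle that motivates the paper's structure. You propose a container argument ``adapted to the coloured structure'' whose templates record the colour-equality pattern of $\chi$ on a near-spanning subgraph. With an unbounded palette this is where the standard container/transference machinery breaks: the auxiliary hypergraph would need unboundedly many copies of $E(K_n)$, and the number of colour-equality patterns of a positive fraction of $E(\bfg(n,p))$ is $\exp\big(\Theta(pn^2\log n)\big)$, which cannot be beaten by failure probabilities of order $\exp(-O(pn^2))$ in a single exposure. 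The paper gets around this with a two-round exposure $p=p_1+p_2(1-p_1)$, $p_2=\omega(p_1\log n)$: Lemma~\ref{lem:multicolour-paths} shows the first round either already contains a monochromatic copy or all lexicographic copies, or else produces a locally dense \emph{rainbow focused} graph whose edges carry lists of $2k-1$ colours; the second round is then handled by the bounded-list canonical Ramsey theorem (Theorem~\ref{thm:can-list-RG}), whose containers-based bound $\exp(-cp_2n^2)$ beats the $\exp(O(p_1n^2\log n))$ union bound over colourings of the first round --- this is precisely where the extra $\log$ factor is spent. Your sketch never splits $p$, so the entropy-versus-probability accounting you gesture at does not close.

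The second gap is the dichotomy itself. ``Close to proper $\Rightarrow$ rainbow cycle'' and ``many monochromatic cherries $\Rightarrow$ lexicographic copy'' is a description of hoped-for outcomes, not an argument, and the lexicographic branch is the serious one: a $\sigma$-lexicographic copy needs $j$ cherries with \emph{pairwise distinct} colours sitting at prescribed cyclic positions, joined by segments whose colours avoid all colours already used, and the mere fact that every colour class has $o(pn^2)$ edges does not produce such a configuration against an adversarial colouring that interpolates between regimes. In the paper the lexicographic copies arise from a different mechanism altogether: when the greedy construction of rainbow paths through heavy/rainbow vertex layers gets stuck, the obstruction is that almost every edge inside a heavy set shares the heavy colour of one of its endpoints; orienting each such edge away from that endpoint and invoking the result on directed cycles in orientations of random graphs (Lemma~\ref{lem:orient}), together with the bound of Lemma~\ref{lem:intersecting upper bound} on cycles meeting a colour class twice, yields copies of \emph{every} acyclic orientation of $C_{2k}$ and hence lexicographic copies for every ordering simultaneously. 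Nothing in your proposal supplies this step, so as it stands the proof does not go through.
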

	
	\begin{remark}
          We remark that with more care we could prove a slightly
          stronger statement, requiring only
          $p\geq Cn^{-1+{1}/{(2k-1)}} \, \log{n}$ for some large
          enough constant $C>0$, and guaranteeing not just one
          canonical copy of $C_{2k}$ but $\Omega(n^{2k}p^{2k})$ many
          copies. We opt to prove the form of Theorem~\ref{thm:canon
            RG even-cycles} given here to simplify the exposition.
          Another reason is that we do not believe that the~$\log n$
          factor that we currently have is in fact required (see
          Section~\ref{sec:related}).
	\end{remark}
	
	In defining our canonical Ramsey property in Definition \ref{def:canonical ramsey property}, we required that all lexicographic copies of $H$ are present when the colouring avoids monochromatic and rainbow copies of $H$. At the other end of the spectrum, one could alternatively require just one lexicographic copy of $H$.


	\begin{definition}[The weak canonical Ramsey property] \label{def:weak canonical ramsey property} For graphs $H$ and $G$, we say $G$ has the weak $H$-canonical Ramsey property, and write $G\wcanarrow H$, if for every edge colouring $\chi:E(G)\rightarrow \bbn$ there is a copy of $H$ in $G$ coloured by $\chi$ which is canonical with respect to \emph{some} ordering $\sigma$ of $V(H)$. 
	\end{definition}
	
	When $k\geq 3$, any lexicographic colouring of $C_{2k}$ uses at least 3 colours and so by the same reasoning as before, $n^{-1/m_2(C_{2k})}$ is also a lower bound for the weak canonical Ramsey property. 
	However for cycles of length four, we see a change in behaviour and have that the threshold for $\bfg(n,p)\wcanarrow C_4$ is in fact  much smaller than $n^{-2/3}=n^{-1/m_2(C_4)}$.
	
	\begin{proposition} \label{obs:weak C4}
          The threshold for $\bfg(n,p)\wcanarrow C_4$ is $n^{-3/4}$.
        \end{proposition}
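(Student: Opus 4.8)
The plan is to pin down the minimal obstruction to the weak canonical Ramsey property for $C_4$ and to show it is a copy of $K_{2,4}$. The key elementary fact is a description of which colourings of a $C_4$ are canonical: writing the edges of a $C_4$ cyclically as $e_1e_2e_3e_4$, a short case analysis on the number of colours shows that, up to symmetry, the colour patterns are $xxxx$ (monochromatic), $xxyy$ and $xxyz$ (both lexicographic for a suitable ordering), $wxyz$ (rainbow), and $xyxy$, $xxxy$, $xyxz$; a colouring of $C_4$ is canonical with respect to some ordering of its vertices if and only if it is either monochromatic, or no pair of opposite edges ($\{e_1,e_3\}$ or $\{e_2,e_4\}$) is monochromatic. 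Equivalently, a colouring of a $C_4$ is \emph{not} canonical precisely when it is non-monochromatic but has a monochromatic pair of opposite edges; I will call such a $C_4$ \emph{spoiled}, a colouring of a graph \emph{spoiling} if it spoils every $C_4$, and so $G\notwcanarrow C_4$ exactly when $G$ has a spoiling colouring.

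For the $1$-statement I claim $K_{2,4}\wcanarrow C_4$. Let $\chi$ colour $K_{2,4}$ with parts $\{u,v\}$ and $\{w_1,\dots,w_4\}$, and put $A_i=(a_i,b_i):=(\chi(uw_i),\chi(vw_i))\in\NN^2$. Suppose $\chi$ is spoiling and apply the fact above to the $C_4$ on $u,w_i,v,w_j$: if $A_i=A_j$ this $C_4$ has pattern $xxxx$ or $xxyy$, hence is not spoiled, so the $A_i$ are pairwise distinct; and its having a monochromatic pair of opposite edges forces $a_i=b_j$ or $a_j=b_i$, for all $i\neq j$. A short argument shows any family of pairwise distinct elements of $\NN^2$ with this last property has size at most $3$: if some $A_j=(c,c)$ is a ``loop'' then every other $A_i$ is incident to $c$ and pigeonholing on which coordinate equals $c$ produces an incompatible pair; the loop-free case is forced into a directed triangle $(c_1,c_2),(c_2,c_3),(c_3,c_1)$. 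Having four such $A_i$ is thus impossible. Since $K_{2,4}$ is strictly balanced with $m(K_{2,4})=e(K_{2,4})/v(K_{2,4})=8/6=4/3$, for $p=\omega(n^{-3/4})$ a.a.s.\ $\bfg(n,p)\supseteq K_{2,4}$, and as $\wcanarrow$ is monotone under adding edges we get $\bfg(n,p)\wcanarrow C_4$ a.a.s.

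For the $0$-statement we must, for $p=o(n^{-3/4})$, a.a.s.\ build a spoiling colouring of $\bfg=\bfg(n,p)$. A routine first-moment computation gives a constant $\ell_0$ such that a.a.s.\ every subgraph $H$ of $\bfg$ on at most $\ell_0$ vertices has $e(H)<\tfrac43 v(H)$; in particular $\bfg$ contains none of $K_{2,4}$, $K_4$, $K_{3,3}-e$, the union of two copies of $K_{2,3}$ meeting in a vertex, or the $12$-vertex graph formed from a $C_4$ by attaching to each edge a new $C_4$ sharing only that edge — all balanced of density $\ge\tfrac43$. Excluding these forces the $C_4$'s of $\bfg$ to be organised as follows: the pairs of vertices with three common neighbours span vertex-disjoint copies of $K_{2,3}$ (``batches''), each carrying exactly three $C_4$'s; and every $C_4$ not inside a batch has a \emph{private} edge, lying in no other $C_4$. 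Now colour in three stages. First, give every edge in no $C_4$ its own fresh colour. Second, for each batch $\{u,v\}\cup\{w_1,w_2,w_3\}$ apply the fixed gadget $\chi(uw_1)=\chi(vw_1)=\chi(uw_2)=\chi(vw_3)=c$, $\chi(vw_2)=x$, $\chi(uw_3)=y$ with $c,x,y$ fresh, under which the three batch $C_4$'s have patterns $xxxy$, $xxxy$, $xyxz$, all spoiled. Third, process the remaining $C_4$'s one at a time in any order: when processing $Q$, its private edge is still uncoloured, so at most three edges of $Q$ are coloured; colour all uncoloured edges of $Q$ other than the private one with fresh distinct colours, and then colour the private edge either with a fresh colour (if $Q$ already has a monochromatic pair of opposite edges) or with the colour of the edge opposite to it (otherwise), which — one checks — always leaves $Q$ spoiled. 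Because the private edge lies in no other $C_4$, these choices affect no other $C_4$, and since stage-three colourings never recolour the edges of an already-processed $C_4$, every $C_4$ ends up spoiled. Colouring all still-uncoloured edges with distinct fresh colours completes a spoiling colouring, so $\bfg\notwcanarrow C_4$ a.a.s.

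The main difficulty is the $0$-statement: verifying that the short forbidden list really does guarantee the stated organisation of the $4$-cycles — essentially, that in a graph all of whose bounded subgraphs have density below $\tfrac43$, the only collection of $C_4$'s with no ``peelable'' member (one with an edge in no other member) is a $K_{2,3}$-batch, and batches cannot overlap badly with one another or with other $C_4$'s. This is a slightly fiddly but routine analysis of how $4$-cycles can share edges. The $1$-statement, by contrast, comes down quickly to the bound of $3$ on pairwise-compatible families in $\NN^2$.
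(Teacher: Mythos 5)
Your argument is correct in substance, but it takes a genuinely different, self-contained route from the paper. For the $1$-statement both proofs reduce to showing $K_{2,4}\wcanarrow C_4$ and then use that $K_{2,4}$ (density $8/6=4/3$) appears a.a.s.\ once $p=\omega(n^{-3/4})$; the paper, however, splits into proper colourings, where it quotes the anti-Ramsey property of $K_{2,4}$ from \cite{BCMP21} to obtain a rainbow $C_4$, and non-proper colourings, where it finds a monochromatic or lexicographic copy by hand, whereas you give a unified argument: your characterisation that a coloured $C_4$ is canonical with respect to some ordering if and only if it is monochromatic or has no monochromatic pair of opposite edges is correct, and your encoding of each vertex $w_i$ by the pair $(\chi(uw_i),\chi(vw_i))$ together with the bound of three on pairwise distinct pairs satisfying the forced compatibility relation does check out (in the loop-free case, if some pair had two ``successors'' one gets two pairs with equal first coordinates, which forces a loop, so the compatibility digraph has too few arcs to cover all six pairs). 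For the $0$-statement the divergence is larger: the paper simply cites the anti-Ramsey threshold $n^{-3/4}$ for $C_4$ from \cite{BCMP21} and notes that a proper, rainbow-$C_4$-free colouring has no canonical copies at all, while you build a spoiling colouring from scratch after a structural analysis of how $4$-cycles can overlap in a graph all of whose bounded subgraphs have density below $\tfrac43$. Your route buys independence from \cite{BCMP21}; the paper's buys brevity.

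The one caveat is that the heart of your $0$-statement, the structural claim (vertex-disjoint $K_{2,3}$ batches, and a private edge for every non-batch $C_4$), is asserted with only a sketch, and this is precisely the content that the paper outsources to \cite{BCMP21}; it is where the real work lies. The claim does appear to be true and your strategy closes it: a second $C_4$ sharing exactly one edge of a given $C_4$ $Q$ must meet $Q$ in exactly the two endpoints of that edge (otherwise a $5$-vertex, $7$-edge subgraph appears), and then bookkeeping of the excess $e(H)-\tfrac43 v(H)$ shows that attaching a further $C_4$ to every edge of a non-batch $Q$ always produces a subgraph of density at least $\tfrac43$, the generic attachment being exactly your $12$-vertex graph with equality. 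Your three-stage colouring (batch gadget, then peeling non-batch $C_4$'s via their private edges) is sound given that claim. So nothing in the approach fails, but as written this step carries the same weight as the cited result it replaces and should be carried out in full before the proof is complete.
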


        The proof of Proposition~\ref{obs:weak C4} is given in Section~\ref{sec:prop_weak_C4}.
	
	\subsection{Related works}
        \label{sec:related}
        In general, we believe that
        $n^{-1/m_2(H)}$ is the threshold for the canonical Ramsey
        property with respect to $H$ for any connected graph~$H$ that is not a star forest or a $K_3$. In accompanying work~\cite{AKMM-1,alvarado26:_ramsey}, we provide evidence for this by showing that when $p=\omega(n^{-1/m_2(H)})$, a.a.s.\ one can find canonical copies of $H$ in any colouring of $\bfg(n,p)$ that respects some prefixed bounded lists of colours assigned to each edge. In fact this result  in the case that $H=C_{2k}$,  or rather a slightly stronger version applied to random subgraphs of uniformly dense graphs (see Theorem~\ref{thm:can-list-RG}),  is also a key component of our proof here. 
	
	Moreover, in simultaneous and independent work, Kam\v{c}ev and 
        Schacht~\cite{kamcev23:_canon,kamcev25:_canon_JLMS} obtained a remarkable result proving that
        $n^{-1/m_2(K_m)}$ is indeed the threshold for
        $\bfg(n,p)\canarrow K_m$ for $m\geq 4$ and thus completely
        resolved the problem for the case when $H$ is a clique. This
        beautiful result appeals to the transference principle of
        Conlon and Gowers~\cite{CG16}, which is a very general result
        obtained by
        using an analytic approach. This principle pre-dates the theory of  hypergraph containers~\cite{BMS14,ST15} and can be used to get many of the same  consequences in random graphs, in the case that the subgraph $H$ of interest is \emph{strictly balanced}, that is, when the maximum in the definition of $m_2(H)$ is achieved by $F=H$ only. For instance, the transference principle was used to recover the $1$-statement of Theorem \ref{RR-original} for strictly balanced $H$. 
	Kam\v{c}ev and Schacht~\cite{kamcev23:_canon,kamcev25:_canon_JLMS} use and adjust this principle to achieve their result for cliques. As a consequence, their methods imply results for all strictly balanced $H$, including even cycles, giving canonical copies of $H$ in any colouring of $\bfg(n,p)$ a.a.s.\ when $p=\omega(n^{-1/m_2(H)})$. In particular this gives an upper bound on the threshold  for the property that $\bfg(n,p)\wcanarrow H$ and determines this threshold for all strictly balanced $H$ other than some small examples (as in Proposition~\ref{obs:weak C4}). Their proof does not imply, however, that one can find \emph{all} lexicographic copies of $H$ and so does not directly imply Theorem \ref{thm:canon RG even-cycles}. 

\subsection{Proof components} Our proof 
incorporates several other related problems for subgraphs in random
graphs. Indeed, a key component of our proof is to use a canonical
Ramsey theorem with list constraints  which we previously
developed~\cite{AKMM-1,alvarado26:_ramsey} and  proved using the hypergraph container
method. We also use relative Tur\'an results for cycles in random
graphs (see Theorem~\ref{thm:turan}). Finally, we appeal to a recent
result on finding directed cycles  in orientations of $\bfg(n,p)$ (see
Lemma \ref{lem:orient}), and use this to find \emph{all} lexicographic
copies of $C_{2k}$ under certain conditions of our colouring of
$\bfg(n,p)$. We believe that this somewhat surprising connection to a
Ramsey problem involving orientations of graphs is one of the most
interesting features of our proof. 
        
\subsubsection*{Organisation} In Section~\ref{sec:prelims} we collect
some notation, basic properties of random graphs and some theory on
locally dense graphs.  In Section~\ref{sec:prop_weak_C4}, we give the
proof of Proposition~\ref{obs:weak C4}.  In Section~\ref{sec:proof
  overview}, we give a proof overview and reduce
Theorem~\ref{thm:canon RG even-cycles} to a key lemma, namely
Lemma~\ref{lem:multicolour-paths}, which proves the existence of a
locally dense ``rainbow focused'' graph generated by $\bfg(n,p)$ (see
Section~\ref{sec:proof overview} for the relevant definitions). In
Section \ref{sec:cycles} we then collect some results on the
distribution of even cycles in $\bfg(n,p)$ that we will need. In 
Section \ref{sec:rainbow paths} we prove Lemma
\ref{lem:multicolour-paths} and hence complete the proof of Theorem
\ref{thm:canon RG even-cycles}. 

\subsubsection*{Acknowledgements} We are grateful to Nina Kam\v{c}ev and Mathias Schacht for insightful conversations about this topic and the anonymous referees for their careful work.

\section{Preliminaries} \label{sec:prelims} In this section, we
collect the necessary theory and tools that we will use in our proof.
These include simple random graph properties in Section
\ref{sec:randomgraphs}, properties concerning the distribution of
paths in random graphs in Section \ref{sec:paths} and the theory of
locally dense graphs in Section \ref{sec:localdense}. Before all of
this though, in Section \ref{sec:notation}, we collect the relevant
notation.
\subsection{Notation} \label{sec:notation} We write
$P_\ell$ for the path with $\ell$ vertices (and hence $\ell-1$ edges)
and~$C_\ell$ for the cycle with~$\ell$ vertices.  For a graph $G$, a
vertex $v\in V(G)$ and a vertex subset 
$U\subseteq V(G)$, we have that $d_G(v,U)$ denotes the number of
neighbours of $v$ in $U$ and in the case that $U=V(G)$, we use the
shorthand $d_G(v)$ to denote the degree of $v$ in $G$.  We have that
$G[U]$ denotes the graph induced by $G$ on the vertex set $U$ and
$e_G(U)=e(G[U])$ denotes the number of edges in $G$ that lie in
$U$. Moreover, for disjoint vertex subsets $U,W\subseteq V(G)$,
$e_G(U,W)$ counts the number of edges with one endpoint in $U$ and the
other in
$W$. 
Given a fixed (non-empty) graph $H$, we say that a graph $G$ is
$H$-free if $G$ contains no copy of $H$. Let $\ex(n,H) := \max\{ e(G)
: G \text{ has $n$ vertices and is } H\text{-free} \}$. The {\em Turán
  density} $\pi(H)$ 
{\em of} $H$ is defined as
\begin{equation} \label{eq:turan_density_def}
  \pi(H) :=
  \lim_{n\to\infty}\ex(n,H)\binom{n}{2}^{-1}.
\end{equation}	 
The well-known Erd\H{o}s--Stone theorem implies that $\pi(H) =
(r-2)/(r-1)$ for every graph~$H$ with chromatic number $r\geq2$.
	
The binomial random graph, denoted $\bfg(n,p)$, refers to the
probability distribution of graphs on vertex set $[n]$ obtained by
taking every possible edge independently with probability $p=p(n)$. We
say an event happens asymptotically almost surely (a.a.s.\ for short)
in $\bfg \sim \bfg(n,p)$ if the probability that it happens tends to 1
as $n$ tends to infinity. We will also use standard asymptotic
notation throughout, with asymptotics always being taken as the number
of vertices $n$ tends to infinity. For a Boolean statement $A$, we
denote by $\mathds{1}[A]$, the indicator function which evaluates to
$1$ if $A$ holds and $0$ if $A$ does not hold.

Finally, we use the notation $a=(b\pm c)d$ to denote that $a\leq
(b+c)d$ and $a\geq (b-c)d$ and we omit floors and ceilings throughout,
so as not to clutter the arguments.
	
\subsection{Random graph properties} \label{sec:randomgraphs}
	
Here we collect some simple properties of random graphs.
\begin{lemma} \label{lem:RG - properties} For any $0<\eps\leq 1$ and
  $p=p(n)$ such that $p=\omega(n^{-1})$, the following properties hold
  a.a.s.\ in $\bfg \sim \bfg (n,p)${\rm:}
  \begin{enumerate}[{\rm(1)}]
  \item \label{RG: edge count} $e_\bfg (U)=(1\pm \eps) p |U|^2/2$ for
    all $U\subseteq V(G)$ with $|U|\geq \sqrt{\tfrac{12n}{\eps^2 p}}$.
  \item \label{RG: degrees} $d_\bfg(v)=(1 \pm \eps)pn$ for all
    vertices $v\in V(\bfg)$.
  \item \label{RG:edges between subsets} $e_\bfg(U,W)=(1\pm
    \eps)p|U||W|$ for all pairs of disjoint subsets $U, W\subseteq
    V(\bfg)$ such that $|U||W|\geq \tfrac{6n}{\eps^2 p}$.
  \item \label{RG:bad vts} For every $U\subseteq V(G)$ with $|U|\geq
    \eps n$, all but at most $\eps^2 n$ vertices $v\in V(G)$ have
    $d_\bfg(v,U)=(1\pm \eps)p|U|$.
  \end{enumerate}
\end{lemma}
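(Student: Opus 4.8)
The plan is to prove all four statements by the same two-step recipe: (i) for a fixed object $X$ in the relevant family — a vertex set $U$ for the first and fourth statements, a single vertex $v$ for the second, and a pair of disjoint sets $(U,W)$ for the third — the quantity of interest is a sum of independent $0/1$ variables, a binomial in the first three cases and stochastically dominated by one in the fourth, so a Chernoff bound shows it deviates from its mean by a $(1\pm\eps)$-factor only with probability exponentially small in $n$; and (ii) take a union bound over the whole family, whose cardinality is at most $4^n$ (indeed at most $2^n$, except at most $4^n$ for disjoint pairs and just $n$ for the degree bound), checking that the Chernoff exponent dominates its logarithm.

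Concretely, for the estimate $e_\bfg(U)=(1\pm\eps)p|U|^2/2$: if $|U|\ge\sqrt{12n/(\eps^2 p)}$ then $e_\bfg(U)\sim\mathrm{Bin}\bigl(\binom{|U|}{2},p\bigr)$ has mean $(1-o(1))\,p|U|^2/2\ge(1-o(1))\,6n/\eps^2$ (the $o(1)$ accounting for $\binom{|U|}{2}$ versus $|U|^2/2$ and being negligible since $|U|\to\infty$), so Chernoff gives failure probability at most $\exp(-(2-o(1))n)$, which beats the at most $2^n$ choices of $U$ because $\ln 2<2$. For $e_\bfg(U,W)=(1\pm\eps)p|U||W|$ one argues identically, with $e_\bfg(U,W)\sim\mathrm{Bin}(|U||W|,p)$ of mean at least $6n/\eps^2$, against the at most $4^n$ disjoint pairs and $\ln 4<2$. (The constants $12$ and $6$ in the hypotheses are chosen precisely so that these two union bounds close.) For $d_\bfg(v)=(1\pm\eps)pn$, each $d_\bfg(v)\sim\mathrm{Bin}(n-1,p)$ has mean $(1-o(1))pn$, so a Chernoff bound for each $v$ together with a union bound over the $n$ vertices finishes.

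The only statement that needs an idea beyond this is the codegree bound: all but $\eps^2 n$ vertices have $d_\bfg(v,U)=(1\pm\eps)p|U|$. Here one cannot expect the estimate for \emph{every} vertex and every large $U$ (it fails when $p$ is small), so instead of showing the exceptional set $B_U:=\{v\in V(\bfg)\setminus U:\ d_\bfg(v,U)\ne(1\pm\eps)p|U|\}$ is empty, one bounds its size. The key observation is that, for a fixed $U$, the events $\{v\in B_U\}$ over distinct $v\notin U$ are \emph{mutually independent}, as each depends only on the edges between $v$ and $U$ and these edge-sets are pairwise disjoint. Since $d_\bfg(v,U)\sim\mathrm{Bin}(|U|,p)$ has mean $p|U|\ge\eps pn$, a Chernoff bound gives $\Pr[v\in B_U]\le 2\exp(-\eps^3 pn/3)=:q$, so $|B_U|$ is stochastically dominated by $\mathrm{Bin}(n,q)$; and since $p=\omega(n^{-1})$ we have $q\to 0$, whence $\Ex|B_U|\le nq=o(\eps^2 n)$. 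The upper-tail Chernoff bound then yields
\[
\Pr\bigl[\,|B_U|\ge\eps^2 n\,\bigr]\ \le\ \Bigl(\frac{e\,nq}{\eps^2 n}\Bigr)^{\eps^2 n}\ =\ \Bigl(\frac{e\,q}{\eps^2}\Bigr)^{\eps^2 n}\ \le\ 2^{-2n}
\]
for all large $n$ (using $q\to 0$), which comfortably beats the at most $2^n$ choices of $U$. I expect this to be the only genuine obstacle: the first instinct — bound $\Ex|B_U|$ and invoke Markov's inequality — gives a per-$U$ failure probability far too large to survive the $2^n$-fold union bound, and one must instead use independence together with the gap $\eps^2 n\gg\Ex|B_U|$ to force the failure probability down to $2^{-\Omega(n)}$.
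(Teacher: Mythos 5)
Your handling of parts (1)--(3) is exactly the paper's route (the paper likewise just invokes Chernoff bounds plus union bounds), and your constant-checking there is fine. (A side remark: the union bound over vertices in part (2) really needs $pn=\Omega(\log n)$ --- below the connectivity threshold there are vertices of degree $0$ --- but this imprecision is inherited from the lemma as stated and from the paper's own one-line proof of (1)--(3), and is harmless where the lemma is applied, since there $pn$ is polynomial in $n$.)

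For part (4) your argument has a genuine gap. You define $B_U\subseteq V(\bfg)\setminus U$, and your ``key observation'' --- that the events $\{v\in B_U\}$ are mutually independent because the edge sets between $v$ and $U$ are pairwise disjoint --- is only true for vertices $v\notin U$. The lemma asserts the bound for all but $\eps^2 n$ vertices of $V(G)$, including the at least $\eps n$ (hence possibly far more than $\eps^2 n$) vertices of $U$ itself, and the paper later uses precisely this: in Lemma~\ref{lem:heavy}~\ref{item:heavy3} the estimate $d_G(w,W')\le(1+\eps)p|W'|$ is applied to vertices $w\in W\subseteq W'$, and in Lemma~\ref{lem:heavy}~\ref{item:heavy1} the exceptional set is a subset of all of $V(G)$. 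For $v,v'\in U$ the two badness events both involve the potential edge $vv'$ (indeed, for $v\in U$ the quantity $d_\bfg(v,U)$ is a function of $\bfg[U]$ alone), so independence fails exactly where it is still needed, and nothing in your proof controls the number of bad vertices inside $U$; the first-instinct Markov bound you rightly reject is no better there. The fix needs an extra step. The paper avoids the issue entirely by proving (1) and (3) with slightly stronger constants and then deducing (4) \emph{deterministically}: if a bad set $B$ of size $\eps^2 n/2$ existed, comparing $\sum_{v\in B}d_\bfg(v,U)$ with $e_\bfg(B,U\setminus B)$ (and, for the upper-tail case, subtracting the contribution of $e_\bfg(B)$, bounded via (1)) contradicts (3). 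Alternatively, staying within your probabilistic framework, for each candidate bad set $S\subseteq U$ of size $\Theta(\eps^2 n)$ one can use $\sum_{v\in S}d_\bfg(v,U)=e_\bfg(S,U\setminus S)+2e_\bfg(S)$ together with Chernoff bounds for these two binomials, whose failure probability $e^{-\omega(n)}$ survives the $4^n$ union bound. Your independence-plus-binomial-tail argument is correct and genuinely different from the paper's deduction for the vertices outside $U$, but as written it proves a strictly weaker statement than the lemma.
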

\begin{proof}
  The first three assertions follow from standard applications of
  Chernoff bounds (see for example \cite[Theorem 2.1]{JLR00}) and
  union
  bounds. 
	
	
  In order to prove~\ref{RG:bad vts}, we can assume $\eps\leq 1$ as
  otherwise there is nothing to prove.  We also assume that Lemma~\ref{lem:RG -
    properties}\ref{RG: edge count}--\ref{RG:edges between subsets}
  a.a.s.\ hold in
  $\bfg \sim \bfg(n,p)$.  Thus, for every pair of disjoint vertex
  subsets $U$ and $W$ with $|U||W|\geq \tfrac{24 n}{\eps^2p}$, one has
  that $ e_\bfg(U,W) = \left(1\pm \tfrac{\eps}{2}\right) p|U||W|.$
  Secondly, we have that $e_\bfg(U)=\left(1\pm\tfrac{\eps
    }{2}\right)p|U|^2/2$ for any set $U\subseteq V(G)$ with $|U|\geq
  \sqrt{\tfrac{48n}{\eps^2p}}$. Letting $G$ be any outcome of $\bfg$
  satisfying these two events on a sufficiently large number of
  vertices $n$, we will show that the desired conclusion of the lemma
  holds in $G$.  So fix some $U$ such that $|U|\geq \eps
  n$. Furthermore, let $B^-$ be the set of vertices $v$ such that
  $d_G(v,U)<(1-\eps)p|U|$ and $B^+$ be the set of vertices $v$ in $G$
  such that $d_G(v,U)>(1+\eps)p|U|$. Supposing for a contradiction that
  $|B^-| > \tfrac{\eps^2n}{2}$, let $B\subseteq B^-$ be a set of size
  exactly $\tfrac{\eps^2n}{2}$ and note that $|U\setminus B|\geq
  |U|-\tfrac{\eps^2 n}{2}\geq \left(1-\tfrac{\eps}{2}\right)|U|$,
  using that $|U|\geq \eps n$.  Therefore, since $p=\omega(n^{-1})$
  and $n$ is sufficiently large, we have that $|U\setminus B||B|\geq
  \tfrac{24 n}{\eps^2p}$ and counting edges from $B$ we get that
  \[e_G(B,U\setminus B)<|B| \cdot (1-\eps)p|U|\leq
    \left(1-\frac{\eps}{2}\right)p|B||U\setminus B|, \]
  contradicting our first assumption on $G$. The case for vertices of
  too high degree is similar. We suppose for a contradiction that
  $|B^+|\geq \tfrac{\eps^2n}{2}$ and let $B'\subseteq B^+$ be a subset
  of size exactly $\tfrac{\eps^2n}{2}$. Now we claim that $e_G(B')\leq
  \tfrac{\eps}{2} p|B'||U|$. Indeed $\tfrac{\eps^2n}{2}=|B'|\geq
  \sqrt{\tfrac{48n}{\eps^2p}}$ as $p=\omega(n^{-1})$ and so our second
  assumption on $G$ applied to $B'$ implies that $e_G(B')\leq
  p|B'|^2\leq \tfrac{\eps^2}{2} pn|B'|\leq \tfrac{\eps}{2}p|B'|U|$ as
  $|U|\geq \eps n$.  Therefore, counting edges from $B'$, we have that
  \[e_G(B',U\setminus B')>|B'| \cdot (1+\eps)p|U|-e_G(B')\geq
    \left(1+\frac{\eps}{2} \right)p|B'||U|\geq \left(1+\frac{\eps}{2}
    \right)p|B'||U\setminus B'|, \]
  again contradicting our first condition of $G$.
\end{proof}
	
	
\subsection{Paths in random graphs} \label{sec:paths}
	
In this section, we explore the distribution of paths in
$\bfg(n,p)$. We make the following definitions.
	
\begin{definition} \label{def:path count}

  For a graph $G$ on vertex set $[n]$, $\ell\in \bbn$ and a pair
  $\{u,v\}$ of vertices, we let $X^\ell_G(\{u,v\})$ denote the number
  of paths with $\ell$ vertices in $G$ with endpoints $u$ and $v$.
  Moreover, for a vertex $v\in [n]$, we let
  \[X^\ell_G(v):=\sum_{u\in [n]\setminus \{v\}}X^\ell_{G}(\{u,v\})\]
  be the number of copies of $P_\ell$ in $G$ that have $v$ as an
  endpoint.
\end{definition}
	
	
	
We need the following statement which will be used to show that if we
find a collection of $\Omega(n^\ell p^{\ell-1})$ copies of $P_\ell$ in
$\bfg(n,p)$, then there will be $\Omega(n^2)$ pairs of vertices $f\in
\binom{[n]}{2}$ that form the endpoints of some path in the
collection.
	
\begin{lemma} \label{lem:well-distributed}
  For any integer $\ell>2$, $\xi >0$ and $p=p(n)$ such that $p
  =\omega\left( n^{-1+1/(\ell-1)}\right) $, the following holds
  a.a.s.\ in $\bfg\sim \bfg(n,p)$. Let $F=F(\bfg)$ be the collection
  of pairs $f\in \binom{[n]}{2}$ such that
  $X_\bfg^\ell(f)>2n^{\ell-2}p^{\ell-1}$. Then
  \[
    \sum_{\substack{f\in F}}{X^\ell_{\bfg}(f)} \leq \xi n^{\ell}
    p^{\ell-1}.
  \]
\end{lemma}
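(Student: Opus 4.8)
The plan is to bound the sum $\sum_{f \in F} X^\ell_\bfg(f)$ via a first-moment argument, controlling the number of copies of $P_\ell$ that sit on ``heavy'' pairs. The key observation is that a pair $f = \{u,v\}$ lands in $F$ only if it already supports more than $2n^{\ell-2}p^{\ell-1}$ paths, and the expected number of $P_\ell$'s between a fixed pair is roughly $n^{\ell-2}p^{\ell-1}$, so $F$ should be a ``large deviation'' event for each pair. First I would set up the appropriate auxiliary random variable: for each pair $f$ and each potential copy $Q$ of $P_\ell$ with endpoints $f$, let $Z_{f,Q} = \mathds{1}[Q \subseteq \bfg]$, so that $X^\ell_\bfg(f) = \sum_Q Z_{f,Q}$. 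The quantity we want to bound is $\sum_f X^\ell_\bfg(f) \cdot \mathds{1}[X^\ell_\bfg(f) > 2n^{\ell-2}p^{\ell-1}]$.

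The natural route is to estimate the expectation of $Y := \sum_{f} X^\ell_\bfg(f)^2$, or rather a truncated variant, and then use the fact that on the event $f \in F$ one has $X^\ell_\bfg(f) \le \tfrac{1}{2} \cdot \big(X^\ell_\bfg(f)\big)^2 / (n^{\ell-2}p^{\ell-1})$, turning the target sum into something dominated by a second-moment-type quantity: $\sum_{f \in F} X^\ell_\bfg(f) \le \frac{1}{2 n^{\ell-2} p^{\ell-1}} \sum_f X^\ell_\bfg(f)^2$. So it suffices to show $\Ex\big[\sum_f X^\ell_\bfg(f)^2\big] = O(n^{\ell} p^{\ell-1} \cdot n^{\ell-2} p^{\ell-1})$, which would give the bound in expectation; then Markov's inequality upgrades this to an a.a.s.\ statement (possibly after replacing $\xi$ by a smaller constant, which is harmless since $\xi$ is arbitrary). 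The quantity $\sum_f X^\ell_\bfg(f)^2$ counts ordered pairs $(Q, Q')$ of copies of $P_\ell$ sharing both endpoints, so $\Ex\big[\sum_f X^\ell_\bfg(f)^2\big] = \sum_{f} \sum_{Q, Q'} \Pr[Q \cup Q' \subseteq \bfg] = \sum_{f}\sum_{Q,Q'} p^{e(Q \cup Q')}$, where the sum is over pairs of $P_\ell$-copies with common endpoint set $f$.

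The main work — and the main obstacle — is the combinatorial estimate of $\sum_{Q, Q'} p^{e(Q\cup Q')}$, organised by the intersection pattern of $Q$ and $Q'$. One groups terms by $j := |V(Q) \cap V(Q')|$ and by the number of shared edges; the union $Q \cup Q'$ has at most $2(\ell-1)$ edges and at least roughly $2\ell - j$ vertices, and the number of ways to choose such a configuration is at most $n^{2\ell - 2 - (j-2)} = n^{2\ell - j}$ (the two endpoints are fixed, contributing the $-2$, and then at most $2\ell - j - 2$ further free vertices... wait, more carefully $n^{2(\ell-2) - (j-2)}$ free internal vertices beyond the two endpoints). The dominant contribution should come from the ``generic'' case where $Q$ and $Q'$ are internally disjoint, giving $\asymp n^{2(\ell-1)} p^{2(\ell-1)}$, exactly matching the target $n^{2\ell-2} p^{2\ell-2}$; one must then check that all degenerate overlap patterns contribute a lower-order term, which is precisely where the hypothesis $p = \omega(n^{-1 + 1/(\ell-1)})$, equivalently $n p^{\ell-1} = \omega(n^{2/\ell}) \to \infty$... rather $n^{\ell-1} p^{\ell-1} / n^{\ell - 2} \to \infty$, is used: each extra shared vertex costs a factor $n$ but saves at least one power of $p$ in a path of length $\ell - 1$, and $p \gg n^{-1+1/(\ell-1)}$ guarantees $n p^{\ell-1} \gg n^{1/(\ell-1)} \gg 1$, wait — one checks that the per-vertex-saved ratio is $\asymp n p^{(\ell-1)/\text{something}}$ and is bounded below, so degenerate terms are geometrically smaller. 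I would carry out this case analysis carefully, noting that the exponent of $n$ drops by $1$ for each vertex identified while the exponent of $p$ drops by at least $1/(\ell-1)$ times... in fact the cleanest bookkeeping is: if $Q \cup Q'$ has $v$ vertices and $e$ edges, the contribution is $n^{v-2} p^e$ (endpoints fixed), and one shows $n^{v-2}p^e \le n^{2\ell - 4} p^{2\ell-2} \cdot (n p^{\ell-1})^{-c}$ for some $c \ge 0$ that is positive unless $Q, Q'$ are internally disjoint, using that along any ``branch'' where the paths diverge the number of extra edges is at least $\tfrac{1}{\ell-1}$ times the number of extra vertices — this is a routine but slightly fiddly convexity/path-counting estimate, and getting the exponents exactly right is the step I expect to spend the most care on.
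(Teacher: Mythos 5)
Your reduction step loses exactly the factor that the lemma is about, and as written the argument cannot give the conclusion for an arbitrary (small) $\xi$. You bound
\[
\sum_{f\in F} X^\ell_\bfg(f)\;\le\;\frac{1}{2\,n^{\ell-2}p^{\ell-1}}\sum_{f} X^\ell_\bfg(f)^2 ,
\]
and then aim to show $\Ex\bigl[\sum_f X^\ell_\bfg(f)^2\bigr]=O\bigl(n^{2\ell-2}p^{2\ell-2}\bigr)$. But that second-moment bound is not just an upper bound, it is the truth: as you yourself note, the internally disjoint pairs $(Q,Q')$ alone contribute $\Theta\bigl(n^{2\ell-2}p^{2\ell-2}\bigr)$, since $\Ex[X^\ell_\bfg(f)^2]\ge \Ex[X^\ell_\bfg(f)]^2=(1+o(1))\,(n^{\ell-2}p^{\ell-1})^2$ for every pair $f$ and there are $\Theta(n^2)$ pairs. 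So after dividing by $2n^{\ell-2}p^{\ell-1}$ your upper bound has expectation $\Theta(n^{\ell}p^{\ell-1})$ with an absolute constant, and Markov's inequality only yields $\Pr\bigl[\sum_{f\in F}X^\ell_\bfg(f)>\xi n^{\ell}p^{\ell-1}\bigr]\le C/\xi$, which is not $o(1)$ once $\xi$ is small. The remark that ``replacing $\xi$ by a smaller constant is harmless'' does not help: the lemma is quantified over every fixed $\xi>0$, and the smallness has to come from somewhere other than the raw second moment, which can never beat $\mu^2$ per pair.

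The missing idea is to exploit that the truncation event $\{X^\ell_\bfg(f)>2n^{\ell-2}p^{\ell-1}\}$ is itself unlikely, i.e.\ to use the \emph{centred} second moment rather than the raw one. The paper's route: show $\Var_p[X^\ell_\bfg(f)]=o(\mu^2)$ (via the $\Delta^*$ computation, which is essentially the overlap case analysis you sketch), deduce $\Pr_p[X^\ell_\bfg(f)\ge 2n^{\ell-2}p^{\ell-1}]=o(1)$ by Chebyshev, and then bound $\Ex_p\bigl[X^\ell_\bfg(f)\,\mathds{1}[X^\ell_\bfg(f)\ge 2n^{\ell-2}p^{\ell-1}]\bigr]\le\bigl(\Ex_p[X^\ell_\bfg(f)^2]\bigr)^{1/2}\bigl(\Pr_p[\,\cdot\,]\bigr)^{1/2}=o(\mu)$ by Cauchy--Schwarz, so that the expected truncated sum over all pairs is $o(n^{\ell}p^{\ell-1})$ and Markov finishes. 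Alternatively you could note that on the truncation event $X^\ell_\bfg(f)\le (2+o(1))\,(X^\ell_\bfg(f)-\mu)$ and bound the truncated variable by $O\bigl((X^\ell_\bfg(f)-\mu)^2/\mu\bigr)$, again reducing to $\Var=o(\mu^2)$. Either way, your overlap/bookkeeping computation is the right ingredient (it is exactly the paper's $\Delta^*=o(\mu)$ estimate), but it must be fed into a variance bound per pair, not into $\sum_f X^\ell_\bfg(f)^2$ summed over all pairs.
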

\begin{proof}
  Fix an integer $\ell \geq 2$ and $\xi>0$ and let $\bfg\sim
  \bfg(n,p)$ with $p= \omega\left(n^{-1+\frac{1}{\ell-1}}\right)$.  If
  we consider $f$ as a uniform random pair in $E(K_n)=\binom{[n]}{2}$
  (selected independently of $\bfg$), then due to the fact that
  $\binom{n}{2}\leq n^2$, one can check that 
  our desired conclusion will follow from
  proving the statement that
  \begin{linenomath} \begin{align}\label{eq : probab-well-distrib}
      \Pr_p\left[ \Ex_f\left[ X_\bfg^\ell(f) \mathds{1}{ [
            X_\bfg^\ell(f) > 2 n^{\ell-2}p^{\ell-1} ] } \right] >
        \xi n^{\ell-2}p^{\ell-1} \right] = o(1),
  \end{align} \end{linenomath}
where $\Pr_p := \Pr_{\bfg\sim \bfg(n,p)}$ and $\Ex_f := \Ex_{f\sim {\rm Unif}(E(K_n))}$. In order to show \eqref{eq : probab-well-distrib}, we use Markov's Inequality, the Fubini--Tonelli Theorem and the  Cauchy--Schwarz Inequality to obtain that
\begin{linenomath}
  \begin{align}
    \nonumber \Pr_p & \left[ \Ex_f\left[
                      X_\bfg^\ell(f) \mathds{1}{[ X_\bfg^\ell(f) \geq 2
                      n^{\ell-2}p^{\ell-1} ] } \right] >\xi \,
                      n^{\ell-2}p^{\ell-1}\right] \\
    \nonumber & \leq \frac{\Ex_p
                \Ex_f \left[ X_\bfg^\ell(f) \mathds{1}{[ X_\bfg^\ell(f) \geq 2
                n^{\ell-2}p^{\ell-1} ] } \right] } { \xi n^{\ell-2}
                p^{\ell-1}} \\
    \label{eq:prob esitimate} & \leq \frac{\Ex_f \left[ \left(\Ex_p [ X_\bfg^\ell(f)^2 ]\right)^{1/2} \, \left(\Pr_p [{  X_\bfg^\ell(f) \geq 2 n^{\ell-2}p^{\ell-1}} ]\right)^{1/2} \right]  }{  \xi n^{\ell-2} p^{\ell-1}}.
\end{align}
\end{linenomath}
We claim then that it is enough to show that for each (deterministic)
$f\in E(K_n)$, one has
\begin{linenomath} \begin{align}\label{eq:second-moment}
  \Ex_p[ X_\bfg^\ell(f)^2 ] = (1+o(1)) \mu^2,
\end{align}  \end{linenomath}
where $\mu:=\Ex_p[X_\bfg^\ell(f)]=(n-2)_{(\ell-2)} \, p^{\ell-1}$ is the expected number of copies of $P_\ell$ that have endpoints $f$. 
Indeed, assuming~\eqref{eq:second-moment}, by the second moment method we have that 
\begin{multline*}
  \qquad\qquad
  \Pr_p [{ X_\bfg^\ell(f) \geq 2 n^{\ell-2}p^{\ell-1} } ] \leq \Pr_p
  [{ X_\bfg^\ell(f) \geq 2 \mu} ]\\ \leq \Var_p[X^\ell_\bfg(f)] /\mu^2 =
  \big[\Ex_p[ X_\bfg^\ell(f)^2 ]-\mu^2\big]/\mu^2=o(1),\qquad\qquad
\end{multline*}
and combining this with \eqref{eq:prob esitimate}, we have that
\[
  \Pr_p\left[ \Ex_f\left[ X_\bfg^\ell(f) \mathds{1}{ [ X_\bfg^\ell(f)
        > 2 n^{\ell-2}p^{\ell-1} ] } \right] > \xi
    n^{\ell-2}p^{\ell-1} \right] = o\left( \frac{\mu }{ 
      n^{\ell-2}p^{\ell-1}} \right) = o(1),
\]
establishing \eqref{eq : probab-well-distrib}, as desired.
		
Thus it only remains to prove~\eqref{eq:second-moment}. Note that this is
equivalent to the statement that $\Var_p[X_\bfg^\ell(f)] =
o(\mu^2)$. Since $X_\bfg^\ell(f)$ is a sum of indicators random
variables, it is well-known (see for example~\cite[Chapter 4]{AS})
that $\Var_p[X_\bfg^\ell(f)]\leq\mu (1+\Delta^*) $ where $\Delta^* =
\sum_{\substack{P}}{ \Pr_p[ P \subset \bfg \mid Q \subset \bfg ] }$,
where~$Q$ is an arbitrary copy of $P_\ell$ in $K_n$ with endpoints~$f$,
and 
the sum goes over all copies~$P$ of~$P_\ell$ in~$K_n$ whose endpoints
are~$f$ and are such that $1\leq |E(P) \cap E(Q)| < \ell-1 $. By our
assumption on $p$, we have that $\mu =\omega(1)$, and so we only need
to show that $\Delta^* = o(\mu)$. Now suppose that $1\leq r \leq
\ell-2$ and $I\subseteq E(Q)$ is some subset of $r$ edges of $Q$. Then
the number of paths $P$ such that $E(P)\cap E(Q) = I$ is at most $
n^{\ell-2-v_I}$, where $v_I$ is the number of vertices in
$V(Q)\setminus f$ that belong to the edges in~$I$. Since $I$ defines a
disjoint union of paths (each path of length at least one), we have
that $v_I \geq r$. So, the total contribution to $\Delta^*$ of paths
$P$ such that $E(P)\cap E(Q)=I$ is at most $n^{\ell-2-r}p^{\ell-1-r}$.
Hence, summing all the possible contributions, we obtain that
\begin{linenomath}
  \begin{align*}
    \Delta^* & = \sum_{r=1}^{\ell-2}{
               \sum_{\substack{I\subset E(Q): |I| = r}}{ n^{\ell-2-r}
               p^{\ell-1-r} } } = \sum_{r=1}^{\ell-2}{ \binom{\ell-1}{r}{
               \frac{ n^{\ell-2}p^{\ell-1}}{(np)^r} } }\leq2^{\ell-1} \cdot
               \frac{n^{\ell-2}p^{\ell-1}}{np}=o(\mu),
  \end{align*}
\end{linenomath}
where the last equation follows because $pn =
\omega\left(n^{\frac{1}{\ell-1}}\right)=\omega(1)$, and we are
done.
\end{proof}
\subsection{Locally dense graphs} \label{sec:localdense}
	
The final tool that we will need is the notion of locally dense
graphs.
\begin{definition}[$(\rho,d$)-dense graphs]
  For $\rho,d\in (0,1]$, we say a graph $G$ is \emph{$(\rho,d$)-dense}
  if for every $S\subset V(G)$ with $|S|\geq \rho n$, the induced
  graph $G[S]$ has at least $d\binom{|S|}{2}$ edges.
\end{definition}
		
\begin{remark} \label{rem:simple local dense} In~\cite{RR95} the
  authors observed that in order to establish that a graph~$G$ is
  $(\rho,d)$-dense it suffices to establish the defining property for
  subsets $S\subset V(G)$ of size exactly~$\lceil\rho n\rceil$.
\end{remark}
	
Graphs with the $(\rho,d)$-dense property for $\rho = o(1)$ are often
called {\em locally dense} graphs. This property can be viewed as a
weak quasirandomness property.

\section{The weak canonical threshold for~$C_4$}
\label{sec:prop_weak_C4}

Here we prove Proposition~\ref{obs:weak C4}, which asserts that the
threshold for the weak canonical property $\bfg(n,p)\wcanarrow C_4$
is~$n^{-3/4}$.
	
\begin{proof}[Proof of Proposition~\ref{obs:weak C4}]
  This essentially follows from a result of Barros, Cavalar, Parczyk
  and the third author of the current paper~\cite{BCMP21}. Indeed
  in~\cite{BCMP21} it is shown that $n^{-3/4}$ is the threshold for
  $G=\bfg(n,p)$ to have the \emph{anti-Ramsey} property with respect
  to $C_4$, meaning that in any \emph{proper} colouring of $G$ there
  is a rainbow copy of $C_4$. Thus, if $p=o(n^{-3/4})$ then a.a.s.\
  there is a proper colouring of $\bfg\sim \bfg(n,p)$ avoiding rainbow
  copies of $C_4$. As the colouring is proper, it also avoids
  monochromatic copies of $C_4$ and all lexicographic copies of $C_4$
  (as for any ordering~$\sigma$ of $V(C_4)$ the first vertex in the
  ordering is incident to two edges of the same colour). This
  establishes the $0$-statement. For the $1$-statement, we also follow
  the approach of~\cite{BCMP21}, which uses that if
  $p=\omega(n^{-3/4})$, then
  $\bfg\sim\bfg(n,p)$ a.a.s.\ contains a copy of $K_{2,4}$ and
  $K_{2,4}$ has the anti-Ramsey property with respect to
  $C_4$. Therefore it suffices to consider non-proper colourings
  of~$K_{2,4}$.  Fix a non-proper colouring of $K_{2,4}$ and assume
  that there is no lexicographic copy of~$C_4$ within
  it. Our aim is to show that it contains a
    monochromatic copy of~$C_4$.
		
   Label the vertex set of $K_{2,4}$ as $X\cup Y$ with $X$ the part of size $2$ and $Y$ the part of size 4 in the bipartition.   Without loss of generality we can assume that colour $1$ induces a
    subgraph of $K_{2,4}$, say~$G_1$, such that $\Delta(G_1)\geq
    2$. Observe that if $x_1,x_2\in X$ and  $y_1,y_2\in Y$ are distinct vertices 
   with $y_1\in N_{G_1}(x_1) \cap N_{G_1}(x_2)$, then $y_2\in 
    N_{G_1}(x_1) \cup N_{G_1}(x_2)$, as if this is not the case, then it
    is easy to check that we obtain a lexicographic copy of $C_4$ in
    our colouring. Likewise if $x_1\in N_{G_1}(y_1) \cap N_{G_1}(y_2)$, then $x_2\in 
    N_{G_1}(y_1) \cup N_{G_1}(y_2)$. In what follows we conclude the proof by showing
    that $G_1$ contains a $C_4$.  Because of the observation  just
    discussed, if there is a vertex $x\in X$ with at least
    $2$~neighbours in $G_1$, 
    then there is a path in $G_1$ with three edges and in particular  there exists $y\in Y$
    with $\deg_{G_1}(y)\geq 2$. Now, using our observation once more,
    we have that each vertex in $Y\setminus \{y\}$ is adjacent to one
    of the vertices in~$X$. By the pigeonhole principle, there is a
    vertex $x\in X$ which is adjacent to two vertices of $Y\setminus
    \{y\}$, say $y'$ and $y''$. Again by our observation, the
    remaining vertex $x'$ in $X\setminus \{x\}$ is adjacent to either $y'$ or $y''$,
    completing a~$C_4$ in~$G_1$ in either case, as desired.
\end{proof}
	
\section{Proof overview} \label{sec:proof overview}
	
Our proof uses ideas from both the original proof of R\"odl and
Ruci\'nski~\cite{RR95} and the approach of Nenadov and Steger
\cite{NS16} proving the $1$-statement of the sparse Ramsey theorem
with a finite number of colours. Let us sketch briefly both ideas, 
restricting our discussion to our case of interest,
where the desired monochromatic graph $H$ is an even cycle, say
$H=C_{2k}$.
	
Firstly, the proof of R\"odl and Ruci\'nski~\cite{RR95} uses the idea
of \emph{colour-focusing}, a classical approach in Ramsey
theory, along with \emph{multi-round exposure} techniques. Suppose
first that we are interested in the two colour case and we want to
show that any red/blue-colouring of $\bfg(n,p)$ with
$p=\omega\left(n^{-1+1/(2k-1)}\right)$ has a monochromatic copy
of~$C_{2k}$ a.a.s. They then reveal $\bfg(n,p)$ in two rounds. In the first
round, they reveal $\bfg_1=\bfg(n,p_1)$, where $p_1=\alpha p$ for some
small $\alpha>0$, and so a positive fraction of the edges of
$\bfg(n,p)$ has been revealed. Now for any red/blue-colouring $\chi_1$
of $E(\bfg_1)$, they prove that there are $\Omega(n^{2k}p^{2k-1})$
monochromatic copies of $P_{2k}$. The key point is that for each such
$(u,v)$-path~$P_{2k}$ which is monochromatic, say, blue, if $uv$ appears in
the second exposure, then our hand is forced if we want to colour $uv$
avoiding a monochromatic copy of $C_{2k}$: we \emph{have} to colour it
red. Now using that there are many monochromatic copies of $P_{2k}$,
they show that there is some colour, say blue, and some locally dense
graph $\Gamma$ on $\Omega(n)$ vertices of $\bfg_1$ such that for each
edge $f=uv$ of $\Gamma$, there is some $(u,v)$-path with $2k$ vertices
in $\bfg_1$ which is coloured completely blue by $\chi_1$. Hence
revealing now $\bfg_2=\bfg(n,p_2)$ such that $\bfg(n,p)=\bfg_1\cup
\bfg_2$, we have that if $\bfg_1$ is coloured by $\chi_1$, any edge in
$E(\bfg_2)\cap \Gamma$ must be coloured red if we want to avoid blue
copies of $C_{2k}$ in $\bfg$. The final step is to show that a.a.s.\
(in fact, with probability large enough to perform a union bound over
all possible colourings of $\bfg_1$), we have that $\bfg_2\cap \Gamma$
contains a copy of $C_{2k}$, which is coloured completely red, and
hence the colouring $\chi_1$ cannot be extended to the edges of
$\bfg_2$ without creating a monochromatic cycle. The proof for more
colours is achieved by a careful induction on the number of colours,
iterating these ideas.
	
The proof of Nenadov and Steger \cite{NS16} is entirely distinct and
appeals to the method of hypergraph containers. Here, suppose we want
to find a monochromatic $C_{2k}$ in any $r$-colouring of a typical
$\bfg=\bfg(n,p)$. The idea is then to create an auxiliary hypergraph
$\cH$ whose vertex set consists of $r$ copies of $E(K_n)$ and whose
edge set encodes the monochromatic copies of $C_{2k}$ in $K_n$. The
key observation then is that any colouring of $\bfg$ that avoids
monochromatic copies of~$C_{2k}$ can be identified with an
independent set in~$\cH$. Using containers, one can efficiently group
together these independent sets and identify a small set of
\emph{containers} $\cC$ such that each independent set of $\cH$
belongs to one such container. The proof then proceeds by showing that
for each \emph{fixed} container $C\in \cC$, it is very unlikely (that
is, with probability $\exp(-\Omega(n^2p))$) that the graph~$\bfg$ lies
within $C$. By this, we mean that there is a colouring of $\bfg$ that,
when mapped in the obvious way to a vertex subset of $\cH$,
corresponds to a subset of $C$. The proof then follows by performing a
union bound over the choices of container $C\in\cC$.

Neither approach generalises directly to the setting where the number
of colours can grow with $n$. Indeed, for the first approach, even if
one could find many monochromatic copies of~$P_{2k}$ in each random
graph $\bfg_i=\bfg(n,p_i)$ (which may not be the case), one would need
$t=\omega(1)$ iterations in order to focus the colours enough to force
a monochromatic copy of $C_{2k}$ in the final random graph $\bfg_t$,
thus forcing each $p_i$ to be of order $p/t$. Likewise, the containers
approach relies crucially on the fact that the hypergraph $\cH$ that
encodes the monochromatic copies of $C_{2k}$ has
$v(\cH)=O(n^2)$. Having an unbounded number of copies of $E(K_n)$ in
$V(\cH)$ corresponding to an unbounded number of colours available,
leads to an adjustment which renders the upper bound on the number of
containers useless and the proof falls apart.
	
The key idea of our proof is to use the first exposure to follow the
idea of \emph{colour-focusing} used by R\"odl and
Ruci\'nski~\cite{RR95}, but instead of finding monochromatic paths to
globally reduce the number of colours under consideration, we find
\emph{rainbow} paths in order to give \emph{local} restrictions on the
colours that can be used in the second exposure. We make the following
definition.
	
\begin{definition}[Rainbow focused graph]
  Let $\ell \geq 3$ be an integer and let $G$ be a graph whose edges
  are coloured with a colouring $\chi:E(G)\rightarrow \bbn$. Define
  $\Gamma = \Gamma(\ell,G,\chi)$ as the graph with vertex set $V(G)$
  such that $uv$ is an edge of $\Gamma$ if there is a $(u,v)$-path $P$
  with $\ell$ vertices in $G$ which is coloured rainbow by $\chi$.
\end{definition}
	
The heart of our argument is the following lemma, which will be used
on the first exposure, and shows that if a colouring $\chi$ of the
random graph $\bfg=\bfg(n,p)$ avoids monochromatic and lexicographic
copies of $C_{2k}$, then there will be many rainbow copies of $P_{2k}$
and in fact the rainbow focused graph $\Gamma(2k,\bfg,\chi)$ will be
locally dense.
	
\begin{lemma}[Locally dense rainbow focused
  graph]\label{lem:multicolour-paths} Given an integer $k \geq 2$
  there exists $d>0$ such that for all $0<\rho\leq 1$, the following
  holds a.a.s.\ in $\bfg\sim\bfg(n,p)$ when
  $p=\omega\left(n^{-1+1/(2k-1)}\right)$.  In every
  edge colouring $\chi:E(\bfg)\rightarrow \bbn$ of $\bfg$, either
  \begin{enumerate}[{\rm(i)}]
  \item \label{item:mono cycle}$\bfg$ contains a monochromatic copy
    of~$C_{2k}$, 
  \item \label{item:lexico} for all orderings $\sigma$ of $V(C_{2k})$,
    the graph~$\bfg$ contains a copy of~$C_{2k}$ which is lexicographic with
    respect to~$\sigma$, or
  \item \label{item:rainbow} the rainbow focused graph $\Gamma =
    \Gamma(2k, \bfg, \chi)$ is $(\rho,d)$-dense.
  \end{enumerate}
\end{lemma}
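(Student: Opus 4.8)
The plan is to prove the lemma in its contrapositive form: I fix an edge-colouring $\chi$ of $\bfg$ that produces no monochromatic copy of $C_{2k}$ and, for some fixed ordering $\sigma_0$ of $V(C_{2k})$, no $\sigma_0$-lexicographic copy of $C_{2k}$, and I show that the rainbow focused graph $\Gamma=\Gamma(2k,\bfg,\chi)$ is $(\rho,d)$-dense. By Remark~\ref{rem:simple local dense} it suffices to fix an arbitrary $S\subseteq V(\bfg)$ with $|S|=\lceil\rho n\rceil$ and prove $e(\Gamma[S])\ge d\binom{|S|}{2}$. The heart of the matter is to exhibit $\Omega(|S|^2 n^{2k-2}p^{2k-1})$ rainbow copies of $P_{2k}$ in $\bfg$ whose two endpoints lie in $S$; granting this, Lemma~\ref{lem:well-distributed} (with $\ell=2k$ and a suitably small $\xi=\xi(k,\rho)$) shows that the endpoint-pairs lying in more than $2n^{2k-2}p^{2k-1}$ copies of $P_{2k}$ carry only a $\xi n^{2k}p^{2k-1}$ fraction of all such paths, so discarding them still leaves $\Omega(|S|^2 n^{2k-2}p^{2k-1})$ rainbow paths spread over endpoint-pairs in $S$ that each lie in at most $2n^{2k-2}p^{2k-1}$ copies of $P_{2k}$; dividing, there are $\Omega(|S|^2)$ distinct such pairs, and each is an edge of $\Gamma[S]$, giving the bound with $d=d(k)$. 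As a preliminary step, a second-moment computation as in the proof of Lemma~\ref{lem:well-distributed}, combined with a Markov bound on the number of atypical pairs, shows that a.a.s.\ and simultaneously for every such $S$ there are already $\Omega(|S|^2n^{2k-2}p^{2k-1})$ copies of $P_{2k}$ with both endpoints in $S$; the work is to see that almost all of them are rainbow.

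A copy of $P_{2k}$ which is not rainbow carries two distinct edges $e,f$ of a common colour, and these are either disjoint or meet in one vertex, in which case they form a monochromatic cherry. Let $m_c$ denote the number of colour-$c$ edges in $\bfg[S]$. Using the distribution estimates for copies of $P_{2k}$ in $\bfg$ collected in Section~\ref{sec:cycles} --- in particular that a fixed pair of disjoint edges lies in $O_k(n^{2k-4}p^{2k-3})$ copies of $P_{2k}$, away from a negligible contribution of atypically popular pairs --- the number of $P_{2k}$'s with endpoints in $S$ that contain two disjoint same-coloured edges is $O_k(n^{2k-4}p^{2k-3}\sum_c m_c^2)$. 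If some colour class has $m_c\ge\epsilon pn^2$ for a constant $\epsilon=\epsilon(k)$, then that colour class is a subgraph of $\bfg$ containing a positive proportion of its edges, and the relative Turán theorem for even cycles, Theorem~\ref{thm:turan} --- applicable because $p=\omega(n^{-1/m_2(C_{2k})})=\omega(n^{-1+1/(2k-1)})$ --- yields a copy of $C_{2k}$ inside it, i.e.\ a monochromatic $C_{2k}$, contrary to hypothesis. Hence $\max_c m_c<\epsilon pn^2$, so $\sum_c m_c^2\le(\max_c m_c)\,e(\bfg[S])=O(\epsilon p^2n^4)$, and the disjoint-pair contribution is $O_k(\epsilon\,n^{2k}p^{2k-1})$, which is negligible once $\epsilon$ is small.

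The decisive case is that of many monochromatic cherries. Each fixed cherry lies in $O_k(n^{2k-3}p^{2k-3})$ copies of $P_{2k}$ with endpoints in $S$, while the total number of monochromatic cherries in $\bfg$ is at most $\sum_{v}\binom{d_\bfg(v)}{2}=O(n^3p^2)$; so if there are at most $\epsilon n^3p^2$ monochromatic cherries the resulting contribution is again $O_k(\epsilon n^{2k}p^{2k-1})$, almost all $P_{2k}$'s with endpoints in $S$ are rainbow, and we conclude as above. Otherwise there are at least $\epsilon n^3p^2$ monochromatic cherries; since each vertex supports only $O((pn)^2)$ of them, a linearly sized set of vertices each supports $\Omega(n^2p^2)$ cherries, and hence by convexity each such vertex $v$ has a dominant colour $c(v)$ appearing on $\Omega(pn)$ of its edges (no colour can be dominant for linearly many vertices, or it would be a very large colour class, again giving a monochromatic $C_{2k}$). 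From this data one isolates a locally dense subgraph of $\bfg$ on which $\chi$ looks like a partial lexicographic colouring: most retained edges $uv$ satisfy $\chi(uv)\in\{c(u),c(v)\}$, and one orients each such edge towards the endpoint it does not ``control''. A copy of $C_{2k}$ whose induced orientation realises the pattern (sources, sinks and their cyclic positions) that $\sigma_0$ imposes on a $2k$-cycle is exactly a $\sigma_0$-lexicographic copy of $C_{2k}$, and such a copy is produced by feeding suitable bounded colour lists, built from the $c(v)$ and the original colours, into the canonical Ramsey theorem with lists, Theorem~\ref{thm:can-list-RG}, together with the result on directed cycles in orientations of random graphs, Lemma~\ref{lem:orient}. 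This contradicts the choice of $\chi$ and $\sigma_0$, so the lemma holds.

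I expect the last step to be the main obstacle: turning the bare statement ``many monochromatic cherries'' into a genuinely locally dense, near-lexicographic sub-structure on which Theorem~\ref{thm:can-list-RG} and Lemma~\ref{lem:orient} apply, and doing so uniformly in the target ordering $\sigma_0$ --- the $2k$-cycle admits several inequivalent lexicographic patterns (indexed by the number of local minima of $\sigma_0$), each corresponding to a different orientation pattern one must locate. A secondary technical point is ensuring that all the distribution estimates used (extension counts of $P_{2k}$ through a fixed pair of edges or a fixed cherry, and the well-distribution bound) hold a.a.s.\ simultaneously for every $S$ of size $\lceil\rho n\rceil$, which is precisely what lets the constant $d$ depend on $k$ alone.
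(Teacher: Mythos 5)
Your reduction is the same as the paper's: fix $S$ of size $\rho n$ (Remark~\ref{rem:simple local dense}), produce $\Omega(|S|^2n^{2k-2}p^{2k-1})$ rainbow copies of $P_{2k}$ with endpoints in $S$, and convert paths into edges of $\Gamma$ via Lemma~\ref{lem:well-distributed}. The genuine gap is in the engine that produces those rainbow paths: your dichotomy ``few monochromatic cherries (global counting) versus many cherries (near-lexicographic structure)'' is not exhaustive. If, say, every vertex carries a dominant colour on half of its incident edges while the other half are totally light with well-spread colours, then the number of monochromatic cherries is $\Theta(n^3p^2)$, so your counting horn only bounds the non-rainbow paths by $O(n^{2k}p^{2k-1})$ with a non-small constant --- the same order as the total --- and proves nothing; at the same time most edges $uv$ satisfy $\chi(uv)\notin\{c(u),c(v)\}$, so no near-lexicographic substructure exists and no contradiction with the missing lexicographic pattern can be extracted. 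In exactly this regime the lemma still asserts that $\Gamma$ is locally dense, and the paper proves it constructively: the dichotomy of Lemma~\ref{lem:lex copies} (either $\Omega(p|U|^2)$ balanced \emph{totally light} edges, or, by orienting the not-totally-light edges and applying Lemma~\ref{lem:orient} together with Lemma~\ref{lem:intersecting upper bound}, lexicographic copies for \emph{all} orderings), followed by a greedy, layered construction of rainbow paths through sets $U_0,\dots,U_{k-1}$ with colour allocations $\lambda_i$ and the bookkeeping of Lemma~\ref{lem:heavy}. Your proposal has no mechanism for producing rainbow paths when non-rainbow paths form a constant fraction of all paths, which is the heart of the lemma.

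Two further points. First, you cannot invoke Theorem~\ref{thm:can-list-RG} inside this lemma: it is a probability statement about a fresh random graph intersected with a \emph{fixed} locally dense $\Gamma$ and \emph{fixed} lists, with failure probability $\exp(-cpn^2)$, whereas here the colouring (hence the lists and the auxiliary graph) is an adversarial function of the very same $\bfg$; a union bound over the $\exp(\Theta(pn^2\log n))$ colourings kills the estimate, which is precisely why Theorem~\ref{thm:canon RG even-cycles} is proved with two independent exposures and why the proof of Lemma~\ref{lem:multicolour-paths} only uses ``for all subgraphs/orientations/colourings'' a.a.s.\ statements (Theorem~\ref{thm:turan}, Lemmas~\ref{lem:intersecting upper bound} and~\ref{lem:orient}). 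Second, your counting constants are asserted pointwise where only averages are available and the adversary colours after seeing $\bfg$: at $p=\omega(n^{-1+1/(2k-1)})$ a fixed pair of disjoint edges lies on only $O(n^{2k-4}p^{2k-3})=o(1)$ \emph{expected} copies of $P_{2k}$, so a uniform per-pair (or per-cherry) bound ``away from atypically popular pairs'' is exactly what an adversary exploits by assigning equal colours to the popular pairs; you would need a colour-free a.a.s.\ analogue of Lemma~\ref{lem:well-distributed} controlling the total path-weight of popular pairs and cherries, which is not supplied. Likewise, the simultaneous lower bound on $P_{2k}$-counts for every $S$ of size $\rho n$ cannot follow from a second moment plus Markov (per-$S$ failure probability $o(1)$ does not survive a union bound over exponentially many $S$); it must come deterministically from the pseudorandomness in Lemma~\ref{lem:RG - properties}, which is what the paper's greedy construction delivers.
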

	
Lemma \ref{lem:multicolour-paths} will be proved in Section
\ref{sec:rainbow paths}. We remark that a relative Tur\'an result for
even cycles (see Theorem \ref{thm:turan}) is used to show the
existence of a monochromatic cycle if some colour class of~$\chi$ is dense. This
is where our proof fails for odd cycles as they are non-degenerate.

Lemma \ref{lem:multicolour-paths} will be used on our first exposure
$\bfg_1$. Note that if $\bfg_1$ is coloured by $\chi$ and
$\Gamma=\Gamma(2k,\bfg_1,\chi)$, then for every edge $f=uv\in
E(\Gamma)$, if $f$ appears in our second exposure and we wish to
colour it avoiding a canonical copy of $C_{2k}$, then we have
restrictions on what colour we use on $f$. Indeed, as $f=uv\in
E(\Gamma)$, there is a rainbow copy of $P_{2k}$ with endpoints~$u$
and~$v$. If $f$ is coloured in any colour other than the colours that
feature on this path, we will get a rainbow cycle. Therefore, for each
edge~$f$ of~$\Gamma$ there is a list of $2k-1$ colours from which we have
to choose the colour of~$f$ if~$f$ appears
in~$\bfg_2$.
The second main intermediate result we need shows that these
local restrictions are enough to force a canonical copy of $C_{2k}$ in
the random graph. In fact, this part of our argument holds much more
generally and so we state it for all possible $H$.
	
\begin{theorem}[Canonical Ramsey theorem with list
  colourings] \label{thm:can-list-RG} Let $H$ be a graph, and let $1 \leq r\in
  \bbn$ and $d>0$ be given.  Then there exists $\rho$, $c>0$ such that the
  following holds.  Let~$\Gamma$ be an $n$-vertex $(\rho,d)$-dense graph
  and let $\call:E(\Gamma)\rightarrow \mathbb{N}^r$ be a
  list-assignment (with repeats allowed). If $p = \omega\left(
    n^{-1/m_2(H)} \right)$, then with probability at least $1-
  \exp{(-cp n^2)}$, any edge colouring~$\chi$ of $\Gamma \cap
  \bfg(n,p)$ such that $\chi(e)\in \cL(e)$ for all $e\in E(\Gamma \cap
  \bfg(n,p))$ contains a canonical copy of $H$ with respect to
  $\sigma$ for all orderings $\sigma$ of $V(H)$.
\end{theorem}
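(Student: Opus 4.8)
We sketch the proof given in~\cite{AKMM-1}; the plan is to run the hypergraph container method with an auxiliary hypergraph whose vertex set stays of size $O(n^2)$ thanks to the bounded lists, fed by a supersaturation statement for locally dense graphs. First I would reduce to a single fixed ordering~$\sigma$ of $V(H)$: it suffices to prove the conclusion for each~$\sigma$ separately with failure probability at most $\exp(-2cpn^2)$, and then union bound over the $|V(H)|!$ orderings. The supersaturation ingredient I need is that there are $\rho_0=\rho_0(d,H)>0$ and $\beta=\beta(d,H)>0$ so that, for $n$ large, \emph{every} edge colouring of \emph{every} $(\rho_0,d)$-dense $n$-vertex graph contains at least $\beta n^{v(H)}$ copies of~$H$ canonical with respect to~$\sigma$. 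This in turn follows because a $(\rho,d)$-dense graph with $\rho\le(d/2)^{v(H)}$ already contains $\Omega_{d,H}(n^{v(H)})$ copies of $K_{v(H)}$, and indeed of $K_{N_0}$ for any fixed $N_0$ once $\rho$ is small: iteratively pass to common neighbourhoods, each step picking one of $\Omega(n)$ clique vertices while the working set shrinks by a factor~$d/2$. Taking $N_0=N_0(H)$ with $K_{N_0}\canarrow H$ (Theorem~\ref{thm:ERcanRam}), and recalling that $G\canarrow H$ certifies a $\sigma$-canonical copy of~$H$ for \emph{every}~$\sigma$, each of these many $K_{N_0}$'s contains a $\sigma$-canonical copy of~$H$; as each such copy lies in at most $n^{N_0-v(H)}$ of them, we get $\Omega(n^{v(H)})$ of them, as required.

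Now fix~$\sigma$, set $k:=e(H)$, and form the $k$-uniform hypergraph~$\cH$ on vertex set $\{(e,c):e\in E(\Gamma),\,c\in\call(e)\}$, whose hyperedges are the sets $\{(e,\phi(e)):e\in E(K)\}$ over all copies~$K$ of~$H$ in~$\Gamma$ and all $\call$-respecting colourings~$\phi$ of~$E(K)$ for which $(K,\phi)$ is $\sigma$-canonical. Since lists have size~$r$, $|V(\cH)|\le r\binom{n}{2}=O(n^2)$ --- exactly as in the bounded-colour setting of Nenadov--Steger~\cite{NS16} --- and one checks $e(\cH)=O_r(n^{v(H)})$. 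An $\call$-respecting colouring~$\chi$ of a subgraph $G\subseteq\Gamma$ is encoded by $I(\chi):=\{(e,\chi(e)):e\in E(G)\}$, and~$\chi$ admits no $\sigma$-canonical copy of~$H$ in~$G$ precisely when $I(\chi)$ is independent in~$\cH$. I would then apply the hypergraph container lemma~\cite{BMS14,ST15} with a parameter~$\tau$ (discussed below) to obtain a family $\calc\subseteq 2^{V(\cH)}$ covering all independent sets, with $\log|\calc|=O\big(\tau n^2\log(1/\tau)\big)$ and $e(\cH[C])\le\varepsilon\,e(\cH)$ for each $C\in\calc$, where $\varepsilon=\varepsilon(d,H)>0$ is a small constant. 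The decisive claim, powered by supersaturation, is that every container misses $\Omega(n^2)$ edges of~$\Gamma$: writing $E_C:=\{e\in E(\Gamma):(e,c)\in C\text{ for some }c\}$, if $|E(\Gamma)\setminus E_C|<\gamma n^2$ with $\gamma=\gamma(d,H)$ small then $\Gamma[E_C]$ is still $(\rho_0,d/2)$-dense, so any selector $\psi$ with $(e,\psi(e))\in C$ for all $e\in E_C$ is an $\call$-respecting colouring of $\Gamma[E_C]$ and hence, by supersaturation, yields $\beta n^{v(H)}$ $\sigma$-canonical copies of~$H$, i.e.\ $\beta n^{v(H)}$ hyperedges of~$\cH$ inside~$C$; choosing $\varepsilon$ small enough that $\varepsilon\,e(\cH)<\beta n^{v(H)}$ contradicts $e(\cH[C])\le\varepsilon\,e(\cH)$. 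Thus $|E(\Gamma)\setminus E_C|\ge\gamma n^2$ for all $C\in\calc$. Finally, if some $\call$-respecting colouring of $\Gamma\cap\bfg(n,p)$ has no $\sigma$-canonical copy of~$H$, then $I(\chi)\subseteq C$ for some $C\in\calc$, forcing $\bfg(n,p)$ to contain no edge of $E(\Gamma)\setminus E_C$; so the failure probability is at most $\sum_{C\in\calc}(1-p)^{\gamma n^2}\le\exp\big(O(\tau n^2\log(1/\tau))-\gamma pn^2\big)$, which is $\le\exp(-2cpn^2)$ provided $\tau\log(1/\tau)=o(p)$ and $c$ is small.

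The main obstacle is the choice of~$\tau$. The container lemma requires the co-degrees of~$\cH$ --- which, up to $O_r(1)$ factors, count copies of~$H$ and overlapping pairs of copies of~$H$ in~$K_n$ --- to meet the standard $\tau$-conditions, and these are exactly controlled by $m_2(H)$, so $\tau$ must be taken of order at least $n^{-1/m_2(H)}$. With the crude choice $\tau=\Theta(n^{-1/m_2(H)})$ one gets $\log|\calc|=O(n^{2-1/m_2(H)}\log n)$, which is $o(pn^2)$ only under the stronger hypothesis $p=\omega(n^{-1/m_2(H)}\log n)$; shaving this logarithm to reach the stated hypothesis $p=\omega(n^{-1/m_2(H)})$ is the technical heart, and is done in~\cite{AKMM-1} by a more careful, iterated application of the container method exploiting that the relevant independent sets $I(\chi)$ put at most one colour on each edge of~$\Gamma$. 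Verifying the $\tau$-conditions for a general~$H$ and checking that $\rho_0,\varepsilon,\gamma,\tau$ can be chosen consistently are the routine but lengthy remaining steps.
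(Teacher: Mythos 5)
A preliminary remark on the comparison you were asked for: this paper does not actually prove Theorem~\ref{thm:can-list-RG}; it is quoted from the companion paper~\cite{AKMM-1}, and the only in-paper information about its proof is the sentence that it ``builds on the containers approach of Nenadov and Steger''. So there is no proof here to measure your sketch against, and it must be judged on its own. On that basis, your architecture is the expected one and most of it is sound: reducing to a single ordering $\sigma$, the supersaturation statement (every colouring of a $(\rho_0,d)$-dense graph has $\Omega(n^{v(H)})$ $\sigma$-canonical copies of $H$, obtained from $\Omega(n^{N_0})$ copies of $K_{N_0}$ plus Theorem~\ref{thm:ERcanRam} and a crude extension count), the auxiliary $e(H)$-uniform hypergraph on the $O_r(n^2)$ pairs $(e,c)$ with $c\in\call(e)$, and the argument that every container $C$ must miss at least $\gamma n^2$ edges of $\Gamma$ are all correct in outline.

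The genuine gap is exactly the point you concede at the end, and it is not a removable technicality: with a naive union bound over the container family you only obtain the conclusion under $p=\omega\bigl(n^{-1/m_2(H)}\log n\bigr)$, which is strictly weaker than the statement you were asked to prove, and the exponential bound $\exp(-cpn^2)$ at $p=\omega(n^{-1/m_2(H)})$ is what the main proof of Theorem~\ref{thm:canon RG even-cycles} consumes (its own $\log$ factor, $p_2=\omega(p_1\log n)$, is already spent on absorbing the $\exp(4p_1n^2\log n)$ colour partitions of $G_1$, so the theorem cannot afford to lose another one). Moreover, the fix is unlikely to be an ``iterated application of the container method'': it is the standard device, already present in the Nenadov--Steger argument your construction mimics, of never summing over containers in isolation but over \emph{fingerprints} $S\subseteq I(\chi)$ of size at most $K\tau n^2$ that determine the container and whose underlying edges must themselves appear in $\bfg(n,p)$. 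Since $S\subseteq I(\chi)$ carries at most one colour per edge, its edge set $E_S$ has $|E_S|=|S|$ and is disjoint from $E(\Gamma)\setminus E_C$, so the failure probability is at most $\sum_{s\leq K\tau n^2}\binom{rn^2}{s}\,p^{s}\,(1-p)^{\gamma n^2}\leq\exp\bigl(O\bigl(\tau n^2\log(p/\tau)\bigr)-\gamma pn^2\bigr)$; because $p=\omega(\tau)$ one has $\tau\log(p/\tau)=o(p)$, and the logarithmic loss disappears, giving $\exp(-cpn^2)$ as claimed. Carrying out this weighted union bound (and verifying the codegree conditions that justify $\tau=\Theta(n^{-1/m_2(H)})$, which you only assert) is what is needed to turn your sketch into a proof of the theorem as stated.
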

	
Theorem~\ref{thm:can-list-RG} is proved in a companion
paper~\cite{AKMM-1,alvarado26:_ramsey} by building on the containers approach of Nenadov
and Steger to prove the sparse Ramsey theorem with finitely many
colours.
	
With Lemma~\ref{lem:multicolour-paths} and Theorem~\ref{thm:can-list-RG}, the proof of Theorem \ref{thm:canon RG
  even-cycles} follows readily, as we now show.
	
\begin{proof}[Proof of Theorem \ref{thm:canon RG even-cycles}]
  Fix $2\leq k\in \bbn$ and $r=2k-1$, let $d=d(k)>0$ be the constant
  output by Lemma~\ref{lem:multicolour-paths}, and let $\rho$, $c>0$ be the
  constants output by Theorem~\ref{thm:can-list-RG} with inputs
  $H=C_{2k}$, $r$ and $d$.  Fix $p=\omega\left(n^{-1+1/(2k-1)}\log
    n\right)$ and let $\mu= pn^{1-1/(2k-1)}\left(\log
    n\right)^{-1}=\omega(1)$. Furthermore, let $p_1=\sqrt{\mu}
  \cdot n^{-1+1/(2k-1)}=\omega\left(n^{-1+1/(2k-1)}\right)$ and let
  $p_2=p_2(n)$ be such that $p=p_1+p_2(1-p_1)$.  Note that
  $p_2=\omega(p_1\log n)$.  
  If $\bfg_i\sim \bfg(n,p_i)$ for $i=1,\,2$ with the~$\bfg_i$
  independent, then $\bfg=\bfg_1\cup\bfg_2\sim \bfg(n,p)$.  Let~$A$ be
  the event that there is an edge 
  colouring $\chi:E(\bfg_1)\cup E(\bfg_2)\rightarrow \bbn$ with no
  copies of~$C_{2k}$ which are canonical with respect to $\sigma$ for some
  ordering $\sigma$ of $V(C_{2k})$. We will show that the event~$A$
  a.a.s.\ does not happen, and hence a.a.s.\ $\bfg \canarrow C_{2k}$.
		
  Let $\cG_1$ be the set of $n$-vertex graphs $G_1$ such that
  $e(G_1)\leq 2p_1\binom{n}{2}$ and such that for any colouring
  $\chi_1:E(G_1)\rightarrow \bbn$, there is either a monochromatic
  copy of $C_{2k}$, lexicographic copies of $C_{2k}$ with respect to
  all orderings of the vertex set of $C_{2k}$, or the rainbow focused
  graph $\Gamma(2k,G_1,\chi_1)$ is $(\rho,d)$-dense. By Lemmas
  \ref{lem:RG - properties} \ref{RG: edge count} and
  \ref{lem:multicolour-paths}, we have that a.a.s.\ $\bfg_1\in \cG_1$
  and so
  \[\Pr[ A]\leq \Pr[ A\mid\bfg_1\in \cG_1]+\Pr[\bfg_1\notin \cG_1] \leq
    \max\left\{\Pr[ A\mid\bfg_1=G_1]:G_1\in \cG_1\right\}+o(1).\]
  Hence it suffices to show that for any fixed $G_1\in \cG_1$, we have
  $\Pr[ A\mid\bfg_1=G_1]=o(1)$.
		
  Fix $G_1\in \cG_1$. Now for any \emph{partition} $\cP$ of
  $E(G_1)$, we say an edge colouring $\chi_1:E(G_1)\rightarrow \bbn$
  \emph{induces} $\cP$ if the colour classes given by $\chi_1$ are
  exactly the parts of the partition $\cP$. Furthermore, for a
  partition $\cP$ of $E(G_1)$, let $A(\cP)$ be the event that there is
  some colouring $\chi:E(G_1)\cup E(\bfg_2)\rightarrow \bbn$ with no
  copies of~$C_{2k}$ which are canonical with respect to some ordering
  $\sigma$ and such that $\chi|_{E(G_1)}$ induces $\cP$. We claim that
  it is enough to show that for any partition $\cP$ of $E(G_1)$, we
  have
  \begin{equation} \label{eq:small prob for fixed G1 col}
    \Pr[A(\cP)\mid\bfg_1=G_1] \leq \exp(-cp_2n^2).
  \end{equation}
  Indeed, given that $\bfg_1=G_1$, the event $A$ is contained in the
  event $\bigcup A(\cP)$ where the union goes over all possible
  partitions $\cP$ of $E(G_1)$ and so if \eqref{eq:small prob for
    fixed G1 col} holds, appealing to the union bound, we have that
  \[\Pr[ A\mid\bfg_1=G_1]\leq \sum_{\cP} \Pr[ A(\cP)\mid\bfg_1=G_1]\leq
    \sum_{\cP} \exp(-cp_2n^2)\leq \exp(4p_1n^2\log n-cp_2n^2)=o(1),\]
  where we used that the number of partitions of $E(G_1)$ is at most
  $\exp(e(G_1)\log(e(G_1))\leq \exp(4p_1n^2\log n)$ as $G_1\in \cG_1$
  and the fact that $p_2=\omega(p_1\log n)$. (We remark that it is here where we need the extra log-factor in our probability, all other parts of our argument would work with $p=\omega(n^{-1+1/(2k-1)})$.)
		
  It remains to prove \eqref{eq:small prob for fixed G1
    col}. So fix some partition $\cP$ of $E(G_1)$ and let
  $\chi_1:E(G_1)\rightarrow \bbn$ be some colouring that induces
  $\cP$. It suffices to bound the probability that there is a
  colouring $\chi:E(G_1)\cup E(\bfg_2)\rightarrow \bbn$ with no
  copies of~$C_{2k}$ which are canonical with respect to some ordering~$\sigma$ and such that $\chi|_{E(G_1)}=\chi_1$. Indeed, for any
  colouring $\chi$ of $E(G_1)\cup E(\bfg_2)$ such that
  $\chi|_{E(G_1)}$ induces $\cP$, we can permute the colours so that
  $\chi|_{E(G_1)}=\chi_1$. Now if $G_1$ has a monochromatic copy of
  $C_{2k}$ or lexicographic copies of $C_{2k}$ with respect to all
  orderings, when coloured by $\chi_1$, we have that
  $\Pr[A(\cP)\mid\bfg_1=G_1]=0$ and we are done. So suppose that this is
  not the case and hence, as $G_1\in \cG_1$, we have that
  $\Gamma:=\Gamma(2k,G_1,\chi_1)$ is $(\rho,d)$-dense. Now for each
  edge $e=uv\in \Gamma$, assign a list $\cL(e)$ of the $r=2k-1$
  colours that feature in a rainbow copy of $P_{2k}$ in $G_1$ with
  endpoints $u$ and $v$. With probability at least $1-\exp(-cp_2n^2)$,
  the graph~$\bfg_2$ satisfies the conclusion of Theorem \ref{thm:can-list-RG}
  with respect to the lists $\cL$ and we claim that this implies~\eqref{eq:small prob for fixed G1 col}, as desired. Indeed, for any
  outcome $G_2$ of $\bfg_2$ that satisfies the conclusion of Theorem~\ref{thm:can-list-RG}, and any colouring $\chi:E(G_1)\cup
  E(G_2)\rightarrow \mathbb{N}$ with $\chi|_{E(G_1)}=\chi_1$, we have
  that if $\chi(e)\notin \cL(e)$ for some edge in $E(G_2)\cap \Gamma$,
  then we have a copy of $C_{2k}$ in $G_1\cup G_2$ which is
  rainbow. However, if $\chi(e)\in \cL(e)$ for all edges in
  $E(G_2)\cap \Gamma$ then there is a canonical copy of $C_{2k}$ with
  respect to $\sigma$ for all orderings $\sigma$ in $\Gamma\cap G_2$,
  as $G_2$ is a realisation of $\bfg_2$ satisfying the conclusion of
  Theorem~\ref{thm:can-list-RG}. This concludes the proof of the
  theorem.
\end{proof}
	
\section{Even cycles in random graphs} \label{sec:cycles}
	
Here we give some auxiliary results on even cycles, which will be
useful in our proof of Lemma~\ref{lem:multicolour-paths}. Firstly, in
Section~\ref{sec:intersect}, we give an upper bound on the number of
even cycles that intersect a given vertex set more than twice. We then
explore Tur\'an properties of random graphs with respect to cycles
in Section~\ref{sec:turan} and finally, in Section~\ref{sec:orient},
we discuss finding directed even cycles in orientations of~$\bfg(n,p)$.
	
\subsection{Even cycles intersecting small vertex sets in many
  vertices} \label{sec:intersect}
	
The following lemma will be useful in our proof of Lemma
\ref{lem:multicolour-paths} to upper bound the number of unlabelled
copies of~$C_{2k}$ of a certain unwanted form. 
	
\begin{lemma} \label{lem:intersecting upper bound} For any integer
  $k\geq 2$ and any $0<\zeta\leq \frac{1}{2k^2}$, the following holds
  a.a.s.\ in $\bfg\sim \bfg(n,p)$ when
  $p=\omega\left(n^{-1+1/(2k-1)}\right)$. For any $U\subseteq V(\bfg)$
  such that $|U|\leq \zeta n$, the number of unlabelled copies
  of~$C_{2k}$ in~$\bfg$ that intersect $U$ in more than one vertex is
  at most $k\zeta^2n^{2k}p^{2k}$.
\end{lemma}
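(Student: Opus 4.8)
The plan is to bound, for each fixed $U$ with $|U| \le \zeta n$, the expected number of copies of $C_{2k}$ meeting $U$ in at least two vertices, and then to take a union bound over all choices of $U$. The obstruction is that there are $\binom{n}{|U|}$ choices of $U$, which can be as large as $2^{\Omega(n)}$, so a naive first-moment estimate will not survive the union bound unless we are careful. The resolution is that a copy of $C_{2k}$ meeting $U$ in $j \ge 2$ vertices is determined by those $j$ vertices of $U$ together with at most $2k-j$ further vertices outside $U$; so we should parametrise copies by which vertices of $U$ they use (paying a factor $\binom{|U|}{j}$ rather than $\binom{n}{|U|}$), and absorb the dependence on the actual set $U$ into the statement by noting the bound we get is increasing in $|U|$, so it suffices to prove it for $|U| = \zeta n$.

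\medskip

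\textbf{The main computation.} Fix a set $U$ of size $u := \zeta n$. For $2 \le j \le 2k$, let me count unlabelled copies of $C_{2k}$ with exactly $j$ vertices in $U$. Such a copy is obtained by choosing the $j$ vertices in $U$ (at most $\binom{u}{j} \le u^j$ ways), the remaining $2k-j$ vertices in $V(\bfg)$ (at most $n^{2k-j}$ ways), and a cyclic arrangement of the $2k$ chosen vertices (at most $(2k)!$ ways, a constant). Each such potential copy appears in $\bfg$ with probability $p^{2k}$. Hence the expected number of unlabelled copies of $C_{2k}$ meeting $U$ in at least two vertices is at most
\[
\sum_{j=2}^{2k} (2k)!\, u^j n^{2k-j} p^{2k}
= (2k)!\, n^{2k} p^{2k} \sum_{j=2}^{2k} \zeta^j
\le (2k)!\, n^{2k} p^{2k} \cdot \zeta^2 \sum_{j\ge 0}\zeta^j
\le 2\,(2k)!\, \zeta^2 n^{2k} p^{2k},
\]
using $\zeta \le 1/(2k^2) \le 1/2$ to sum the geometric series. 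This already has the right shape $\zeta^2 n^{2k}p^{2k}$ up to the constant $2\,(2k)!$, which is larger than the target constant $2k$; the slack must be recovered via concentration rather than the first moment alone, so a bare application of Markov's inequality is not quite enough.

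\medskip

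\textbf{Upgrading to an a.a.s.\ statement uniform over $U$.} To get the claimed bound with constant $2k$ and uniformly over all $U$, I would not use Markov directly but instead establish concentration of the relevant count. Write $Z_U$ for the number of unlabelled $C_{2k}$'s meeting $U$ in $\ge 2$ vertices. Since $p = \omega(n^{-1+1/(2k-1)})$, we have $n^{2k}p^{2k} = \omega((np)^{\,?})$ large; more to the point, every subgraph of $C_{2k}$ on $\ge 2$ vertices which is a union of paths has $1$-density bounded so that $\Ex[Z_U] = \omega(n^{2})$ — in fact $\Ex[Z_U] \ge \zeta^2 n^{2k}p^{2k}$ grows faster than $n^2$ because $n^{2k-2}p^{2k} = \omega(n^{2k-2} n^{-(2k-1)\cdot 2k/(2k-1)}) $, let me just say it is $\omega(n^2\log n)$. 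I would then apply a concentration inequality for low-degree polynomials / subgraph counts (e.g.\ Janson's inequality or the Kim–Vu / Janson–Ruci\'nski-type bounds, or a martingale/Azuma argument on edge exposure as in \cite[Chapter 8]{JLR00}) to show that $\Pr[Z_U > (1+o(1))\Ex[Z_U]] \le \exp(-\omega(n))$, which beats the union bound $\binom{n}{u} \le 2^n$ over choices of $U$. Then, after the union bound, a.a.s.\ every $U$ with $|U| = \zeta n$ satisfies $Z_U \le 2\,(2k)!\,\zeta^2 n^{2k}p^{2k}$; but we want the cleaner constant $2k$, so I would instead run the whole argument with $\zeta' := \zeta/\sqrt{(2k)!}$ in the internal estimate, or simply note that the statement as written is what we need and the constant $2\,(2k)!$ can be reduced by being a little more careful about the cyclic-arrangement count (most cyclic orders of $j$ fixed vertices and $2k-j$ free ones are not realisable as distinct unlabelled cycles), pushing the constant below $2k$. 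The smaller $|U| < \zeta n$ case follows since $Z_U$ is monotone in $U$ and $|U| \mapsto 2k|U|^2 n^{2k-2}p^{2k}$ is increasing.

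\medskip

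\textbf{Where the difficulty lies.} The genuinely delicate point is the concentration step: we need a tail bound strong enough ($e^{-\omega(n)}$) to survive a union bound over exponentially many sets $U$, and $Z_U$ is a count of copies of a non-trivial (non-star) graph, so Janson-type upper-tail bounds are not automatically exponentially small in $n$. The saving grace is that we only ever need to fix the $\ge 2$ vertices inside $U$, so the "free" part of each copy is a union of at most two paths, each of which is a strictly balanced, non-dense structure; the count of such configurations, after fixing the $U$-vertices, behaves like a sum over few paths whose upper tail \emph{is} controlled at the required scale because $n^{2k-2}p^{2k-1} = \omega(\mathrm{poly})$ is large enough. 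Concretely I expect to decompose according to the partition of the $j$ used $U$-vertices around the cycle into arcs, bound each arc's path-count by Lemma~\ref{lem:well-distributed}-type reasoning (or directly by a Chernoff bound on a sum of nearly-independent indicators), and multiply. This bookkeeping over arc-partitions is the main technical obstacle, but it is routine given the tools in Section~\ref{sec:paths}.
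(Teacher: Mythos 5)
Your first-moment computation and the monotonicity reduction to $|U|=\zeta n$ are fine, but the step you yourself flag as delicate — a per-$U$ upper-tail bound $\Pr[Z_U>(1+o(1))\Ex Z_U]\leq\exp(-\omega(n))$ strong enough to survive the union bound over $\binom{n}{\zeta n}=\exp(\Theta(n))$ sets — is a genuine gap, and in fact such a bound is simply not available in the relevant range of $p$. This is exactly the ``infamous upper tail'' phenomenon: for $p$ close to the threshold $n^{-1+1/(2k-1)}$, planting a partial star of $m\approx\sqrt{\zeta}\,n^{3/2}p$ edges at a single vertex $v\in U$ costs only probability $p^{m}=\exp\bigl(-O(n^{3/2}p\log n)\bigr)=\exp(-o(n))$ (since $n^{3/2}p\leq n^{5/6+o(1)}$ for $k\geq 2$), yet conditionally it boosts the number of copies of $C_{2k}$ through $v$ and a second $U$-vertex by a constant multiple of $\Ex Z_U$. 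So the upper tail of $Z_U$ at twice its mean has probability at least $\exp(-o(n))$, which cannot beat the union bound. Your proposed rescue — fixing the $U$-vertices and controlling the remaining arcs by Lemma~\ref{lem:well-distributed}-type reasoning or Chernoff — does not help: path counts between \emph{fixed} endpoints suffer the same planting obstruction, and Lemma~\ref{lem:well-distributed} is itself only an a.a.s.\ statement proved by the second moment method, so it cannot be combined with a union bound over exponentially many $U$. (Also, your claim that $\Ex[Z_U]=\omega(n^2\log n)$ is false near the threshold: for $k=2$ and $p\approx n^{-2/3}$ one has $n^{2k}p^{2k}=(np)^4\approx n^{4/3}\ll n^2$.)

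The paper circumvents the upper-tail problem by never taking an upper tail of a $U$-dependent count. It writes the labelled count as $Z^U_{\geq 2}=Z-Z^U_0-Z^U_1$, where $Z$ is the total number of labelled copies of $C_{2k}$, and $Z^U_0$, $Z^U_1$ count copies meeting $U$ in zero, respectively one, vertex. The only upper tail needed is for $Z$, which does not depend on $U$, so a single a.a.s.\ bound via the second moment method suffices (no union bound). The $U$-dependent quantities $Z^U_0$ and $Z^U_1$ only need \emph{lower}-tail bounds, and lower tails of subgraph counts do admit the required exponential decay via Janson's inequality, namely $\exp(-c\,pn^2)=\exp(-\omega(n))$, which comfortably beats $\binom{n}{\zeta n}$. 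You would need to restructure your argument along these lines (or find some other way to avoid a per-$U$ upper-tail estimate) for the proof to go through.
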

	
\begin{proof}
  This proof is similar to~\cite[Lemma 4.1]{ABCDJMRS}. First, we
  introduce some (local) notation. Let~$Z$ be the random variable
  that counts the number of labelled copies of $C_{2k}$ in $\bfg$. For
  each $U\subseteq V(\bfg)$ and $i\in \{0,1,\ldots,2k\}$, let~
  $Z^{U}_{i}$ be the random variable that counts the number of labelled
  copies of $C_{2k}$ that intersects $U$ in exactly $i$ vertices. Set
  $Z^{U}_{\geq i} := \sum_{j\geq i}{Z^{U}_{j}}$.
  Our goal is to show that the event
  \begin{align}\label{eq:intersec-counting} \forall
    \, U \in {n \choose \zeta n}\,,\,\, Z^U_{\geq 2} & \leq 4k^2
                                                       \zeta^2 \, p^{2k} n^{2k}
  \end{align}
  a.a.s.\ occurs. Notice that this result is (essentially)
equivalent to our desired conclusion. Indeed, assume
that~\eqref{eq:intersec-counting} occurs and let $U\subseteq 
V(\bfg)$ with $|U| \leq \zeta n$ be given.  Since increasing the
size of~$U$ only increases the number of cycles intersecting 
it, we can assume the worst case $|U| = \zeta n$.  Now, 
$Z^U_{\geq 2}/{\textrm{aut}}(C_{2k})=Z^U_{\geq 2}/4k$ is the number
of unlabelled copies of~$C_{2k}$ in~$\bfg$ that intersect~$U$
in more than one vertex.  In particular,
if~\eqref{eq:intersec-counting} holds, then the number of such copies
of~$C_{2k}$ is at most $k\zeta^2 \, p^{2k} n^{2k}$, as desired.
It remains to show that~\eqref{eq:intersec-counting} a.a.s.\
occurs. Note that $Z^{U}_{\geq 2} = Z - Z^{U}_{0} - Z^{U}_{1}$.
Let $\epsilon=\zeta^3$.  We shall prove that there is $c =
c(k,\zeta)>0$ such that
  \begin{align}\label{eq:concentr-cycles}
    \Pr[Z >
    (1+\eps) \, p^{2k} n^{2k} ] & = o(1), \\
    \Pr[Z^{U}_0 < (1-\eps) \,
    (1-\zeta)^{2k} \, p^{2k} n^{2k} ] & \leq \exp( - c \, pn^2 )
                                        \label{eq:concentr-0-cycles}\\
    \intertext{and}
    \Pr[Z^{U}_1 < (1-\eps) \, 2k \,
    \zeta (1-\zeta)^{2k-1} \, p^{2k} n^{2k} ] & \leq 2k \, \exp( - c
                                                \, pn^2), \label{eq:concentr-1-cycles}
  \end{align}
  for any $U \in {n \choose \zeta n}$.  Suppose the event
  in~\eqref{eq:concentr-cycles} fails as do the events
  in~\eqref{eq:concentr-0-cycles} and~\eqref{eq:concentr-1-cycles} for
  a given~$U$.  Then, as we shall check shortly, we have that
  $Z^U_{\geq 2} \leq 4k^2\zeta^2 \, p^{2k} n^{2k}$ for that set~$U$,
  which is the desired inequality in~\eqref{eq:intersec-counting}.
  Thus, the fact that~\eqref{eq:intersec-counting} holds a.a.s.\
  follows from~\eqref{eq:concentr-cycles},
  \eqref{eq:concentr-0-cycles} and~\eqref{eq:concentr-1-cycles} by the
  union bound, because
  $o(1) + (2k+1){ n \choose \zeta n} \exp(-c \, pn^2) = o(1)$, where
  we have used that $pn^2 = \omega(n)$.  It remains to check that if
  the events in~\eqref{eq:concentr-cycles},
  \eqref{eq:concentr-0-cycles} and~\eqref{eq:concentr-1-cycles} all
  fail, then $Z^U_{\geq 2} \leq 4k^2\zeta^2 \, p^{2k} n^{2k}$.
  Suppose those events do fail.  Then
  \begin{align*} Z^{U}_{\geq 2} = Z - Z^{U}_{0} -
    Z^{U}_{1} \leq \left((1+\eps) - (1-\eps) \, (1-\zeta)^{2k} -
    (1-\eps) \, 2k \, \zeta (1-\zeta)^{2k-1}\right)p^{2k} n^{2k}.
  \end{align*}
  Applying Boole's inequality twice and the fact that
  $\eps = \zeta^3$ and $\zeta \leq1/{2k^2}$, we see that 
  \begin{align*}
    (1+\eps)  &- (1-\eps) \, (1-\zeta)^{2k} -  (1-\eps) \, 2k \, \zeta (1-\zeta)^{2k-1}  \\
              &\quad =  1 - (1-\zeta)^{2k} - 2k \zeta(1-\zeta)^{2k-1} + \eps ( 1 + (1-\zeta)^{2k} + 2k \zeta(1-\zeta)^{2k-1} ) \\
              &\quad \leq 1 - (  1 - 2k\zeta) -2k \zeta (1 - (2k-1) \zeta) + \eps(2+2k\zeta) \\
              &\quad \leq 2k\zeta^2(2k-1)+3\epsilon\\
              &\quad \leq 4k^2 \zeta^2, 
  \end{align*}
  and hence $Z^U_{\geq 2} \leq 4k^2\zeta^2 \, p^{2k} n^{2k}$, as
  required. 
  
  Thus, it is sufficient to prove that~\eqref{eq:concentr-cycles}
  holds and that~\eqref{eq:concentr-0-cycles} and
  \eqref{eq:concentr-1-cycles} hold for any
  $U \in {n \choose \zeta n}$.  These results follow by standard
  concentration arguments.  More exactly, one can show
  \eqref{eq:concentr-cycles} by the second moment method, while
  crucially one can show \eqref{eq:concentr-0-cycles} and
  \eqref{eq:concentr-1-cycles} using Janson inequalities (which give
  us a good exponential decay). Upper and lower tails for subgraph
  counts have been extensively studied (see for
  instance~\cite{JLR00}), and noticing that $Z_0^U$ is the random
  variable counting labelled copies of $C_{2k}$ in (a copy of)
  $\bfg((1-\zeta)n,p)$, we just need to
  justify~\eqref{eq:concentr-1-cycles}.  We introduce some formalism
  for the sake of precision.  Let~$C_{2k}^0$ be a $2k$-cycle with
  $V(C_{2k}^0)=[2k]$.  Also, let~$K_n^0$ be the complete graph with
  $V(K_n^0)=[n]$, and suppose our random graphs~$\bfg\sim\bfg(n,p)$
  are subgraphs of~$K_n^0$.  A labelled copy of~$C_{2k}$ in~$K_n^0$ is
  an injective map $\phi:[2k]\to[n]$.  Thus, $Z_1^U$~is the number of
  such~$\phi$ such that~$\phi(C_{2k}^0)$ is a subgraph of~$\bfg$ and
  $|V(\phi(C_{2k}^0))\cap U|=1$.  For convenience, we let
  $Z_1^U = \sum_{s \in [2k]}{Z_1^{U;s}}$, where~$Z_1^{U;s}$ is the
  number of~$\phi$ with $\phi(C_{2k}^0)\subset\bfg$
  such that
  $V(\phi[C_{2k}^0])\cap U = \{\phi(s)\}$. Note that, by symmetry, the
  random variables $Z_1^{U;s}$ ($s\in[2k]$) have the same
  distribution. Thus, it is enough to show that
\begin{linenomath} \begin{align}\label{eq:concentr-1_1-cycle} \Pr[
    Z_1^{U;1} < (1-\eps) \, \zeta (1-\zeta)^{2k-1} \, p^{2k} n^{2k} ]
    \leq \exp(- c \, pn^2) .
\end{align} \end{linenomath}
Following the notation of~\cite[Chapter 2]{JLR00}, we have that
\begin{linenomath} \begin{align} \label{eq:Janson_1-1-cycles} \Pr[
    Z_1^{U;1} < (1-\eps) \, \mu ] \leq \exp\left(- \frac{\eps^2
        \mu^2}{2 \Delta + \mu} \right),
\end{align} \end{linenomath}
where $\mu = \Ex[ Z_1^{U;1} ] = (1+o(1)) \zeta (1-\zeta)^{2k-1} p^{2k} n^{2k}$, and 
\begin{linenomath}
  \begin{align*}
    \Delta := \Big(\sum_{\psi \sim \phi }{ \Pr[\psi[C_{2k}^0]
    \cup \phi[C_{2k}^0] \subseteq \bfg ] }\Big)/2
  \end{align*}
\end{linenomath}
where $\psi \sim \phi$ indicates that the sum ranges over all
pairs~$(\psi,\phi)$ of distinct labelled copies of~$C_{2k}^0$
in~$K_n^0$ which
send vertex~$1$ to~$U$ and
$|E(\psi[C_{2k}^0])\cap E(\phi[C_{2k}^0])|\geq1$.  Note that
\begin{equation}\label{eq:delta_1-1-cycle}
  \Delta = O(p^{4k-1} n^{4k-2}),
\end{equation}
since  the expectation of the number of pairs of labelled cycles of length $2k$ that intersect in at least one edge in $G(n,p)$ without the restriction of them containing exactly  one vertex of $U$ is $O(p^{4k-1} n^{4k-2})$. 
 Furthermore,
because of our hypothesis on~$p$, we have that $\mu=o(p^{4k-1}n^{4k-2})$, and 
hence the denominator in~\eqref{eq:Janson_1-1-cycles} is
at most $O(p^{4k-1}n^{4k-2})$.  Taking this into account and plugging in the value
of~$\mu$ 
into~\eqref{eq:Janson_1-1-cycles}, we
conclude~\eqref{eq:concentr-1_1-cycle}, as desired.
\end{proof}	
	
\subsection{Tur\'an properties of random graphs} \label{sec:turan} We
will use the following theorem established by Haxell, {\L}uczak and
the second author of the current paper~\cite{HKL95}.
	
\begin{theorem}[Relative Turán Theorem: even-cycle
  case] \label{thm:turan} Let $2\leq k\in \bbn$ and $\delta>0$ be
  arbitrary.  If $p=p(n)$ is such that
  $p=\omega\left(n^{-1+1/(2k-1)}\right)$, then a.a.s.\ $\bfg\sim
  \bfg(n,p)$ has the property that any subgraph $H\subseteq \bfg$ with
  $e(H)\geq \delta np^2$ contains a copy of $C_{2k}$.
\end{theorem}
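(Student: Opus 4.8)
Theorem~\ref{thm:turan} is a result of Haxell, Kohayakawa and {\L}uczak~\cite{HKL95}, so the plan is really to recall how one proves it. The first move is to recast the statement: equivalently, a.a.s.\ $\bfg=\bfg(n,p)$ contains \emph{no} $C_{2k}$-free subgraph $H$ with $e(H)\geq\delta np^2$ — a quantity which, at least in the range of $p$ where the statement has content, one should read as a positive fraction $\delta'pn^2$ of $e(\bfg)\sim pn^2/2$. From such a counterexample $H$ I would extract, by the usual iterated deletion of vertices whose current degree is below $c\,pn$ (a step that discards only $o(pn^2)$ edges in total, since $p=\omega(n^{-1})$), a $C_{2k}$-free subgraph $H'$ of minimum degree at least $c\,pn$, for a suitable $c=c(\delta',k)>0$. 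At this point the problem has become a purely one-sided structural question about $\bfg(n,p)$.

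The core assertion to establish is then: a.a.s.\ $\bfg(n,p)$ has no $C_{2k}$-free subgraph of minimum degree $\Omega(pn)$. I see two natural routes. The regularity route: show that a.a.s.\ $\bfg$ is sufficiently ``upper-uniform'' to run the sparse regularity lemma, apply it to $H'$, pass to a reduced graph that still has positive edge density (and hence contains $C_{2k}$ as a subgraph, $C_{2k}$ being bipartite), and lift this to a copy of $C_{2k}$ in $H'$ via a K{\L}R-type counting lemma for $C_{2k}$; the point is that this counting lemma is valid already at $p=\omega(n^{-1/m_2(C_{2k})})=\omega(n^{-1+1/(2k-1)})$ because $C_{2k}$ is \emph{strictly $2$-balanced} — every proper subgraph is a disjoint union of paths and so has $2$-density $1<m_2(C_{2k})=\tfrac{2k-1}{2k-2}$. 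The transference route: since $\pi(C_{2k})=0$ and $\ex(n,C_{2k})=O(n^{1+1/k})=o(n^2)$ by Bondy--Simonovits, one can feed the trivial deterministic statement ``a positive density of edges forces $C_{2k}$'' into the transference principle of Conlon and Gowers~\cite{CG16}, whose hypotheses are again met precisely because $C_{2k}$ is strictly $2$-balanced. Either route yields Theorem~\ref{thm:turan}; alternatively one can follow the original bespoke combinatorial argument of~\cite{HKL95}.

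The step I expect to be the real obstacle is exactly this core assertion, and it is worth isolating why it is delicate. For $p=\omega(n^{-1+1/k})$ one is already done by purely deterministic means: any graph of minimum degree $\Omega(pn)=\omega(n^{1/k})$ contains a $C_{2k}$ by Bondy--Simonovits. The genuine content of the theorem lies in the window $n^{-1+1/(2k-1)}\ll p\ll n^{-1+1/k}$, where $pn\ll n^{1/k}$ and so no ``dense minor'' argument can simply quote an extremal bound; there one must exploit pseudorandomness of $\bfg$ all the way down to the $2$-density threshold, which is precisely what the sparse regularity inheritance / K{\L}R counting lemma for $C_{2k}$ supplies (equivalently, what the sparsity hypotheses of the Conlon--Gowers machinery encode). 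Everything else — the first-moment passage to a minimal counterexample, the degeneracy reduction to a minimum-degree subgraph, and the Chernoff estimates controlling edge counts in $\bfg(n,p)$ — is routine and is in any case subsumed by the tools already collected in Section~\ref{sec:prelims}.
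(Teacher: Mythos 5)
The paper does not prove Theorem~\ref{thm:turan} at all: it is quoted directly from Haxell, Kohayakawa and {\L}uczak~\cite{HKL95}, which is exactly what you do, and your sketch of how that result is established (sparse regularity plus a K{\L}R-type counting lemma for the strictly $2$-balanced $C_{2k}$, or Conlon--Gowers/Schacht transference using $\pi(C_{2k})=0$, after a routine reduction to a subgraph of minimum degree $\Omega(pn)$) is accurate. Your reading of the edge threshold as a positive fraction $\delta p n^2$ of the edges rather than the literal $\delta np^2$ is also the right one --- that is the form in~\cite{HKL95} and the form in which the theorem is actually invoked in the proofs of Lemmas~\ref{lem:heavy} and~\ref{lem:lex copies}.
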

	
We remark that more precise results \cite{KKS98,MS16} for even cycles
have since been obtained and that Theorem~\ref{thm:turan} falls into
the general program of establishing the extremal densities of $H$-free
subgraphs of $\bfg(n,p)$. This line of research culminated in
breakthrough results \cite{CG16,S16} providing the full picture for
all graphs $H$ and subsequent elegant alternative proofs
\cite{BMS14,ST15} using the method of hypergraph containers (see the
next section). We refer the reader to the survey of R\"odl and
Schacht~\cite{RS13} and Conlon~\cite{conlon14:_combin}
for more on the topic.

\subsection{Directed cycles in orientations of random graphs}
\label{sec:orient}
We will be interested in finding copies of directed cycles in
arbitrary orientations of $\bfg(n,p)$. This problem has been studied
by Cavalar \cite{C} (see also~\cite{BCKN, barros24:_direc_ramsey}).
In particular, it is known that when~$\vec{H}$ is an acyclic
orientation of a complete graph or a cycle, 
then, except for the transitive triangle, 
$n^{-1/m_2(H)}$ is the threshold for the property that any
orientation of $\bfg(n,p)$ contains a copy of~$\vec{H}$.
Here we require the
following lemma, which is a slight strengthening of a theorem of
Cavalar~\cite{C} (see also~\cite{barros24:_direc_ramsey}).  For
simplicity, we restrict ourselves to cycles, as this is the only case
we need here. 
	
It will be convenient to introduce the following definition.  Let
$3\leq\ell\in \bbn$ and~$x\geq0$ be given. We say that a graph~$G$ has
the \textit{$(C_\ell,x)$-orientation property} if for any acyclic
orientation~$\vec C_\ell$ of~$C_\ell$ and any orientation~$\vec G$
of~$G$, the number of labelled copies of~$\vec C_\ell$ in~$\vec G$ is
at least~$x$.
	
\begin{lemma}
  \label{lem:orient}
  Let $3\leq\ell\in \bbn$ and $\epsilon>0$ be given and suppose
  $p=p(n)$ is such that $p=\omega\big(n^{-1+1/(\ell-1)}\big)$.  Then
  there is~$\eta>0$ such that for every~$\nu>0$ a.a.s.\ $\bfg\sim
  \bfg(n,p)$ has the property that for every $W\subseteq V(\bfg)$ with
  $|W|\geq \nu n$, every subgraph~$G\subseteq \bfg[W]$ with
  \begin{equation}
    \label{eq:1}
    e(G)\geq\left(\pi(K_{2^{\ell-1}})+\eps\right)
    p\binom{|W|}2      
  \end{equation}
  has the $\big(C_\ell,\eta(\nu pn\big)^\ell)$-orientation property.
\end{lemma}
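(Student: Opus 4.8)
The plan is to reduce, via sparse regularity, to a classical fact about tournaments, following the strategy of Cavalar~\cite{C}. The combinatorial core is that \emph{every orientation of $K_{2^{\ell-1}}$ contains every acyclic orientation of $C_\ell$}: a greedy argument (repeatedly pass to the larger of the out- and in-neighbourhood of a vertex) shows that every tournament on $2^{\ell-1}$ vertices contains a transitive tournament on $\ell$ vertices, and mapping a topological ordering of $\vec C_\ell$ onto the linear order of such a transitive tournament embeds $\vec C_\ell$, since every arc of $\vec C_\ell$ then points ``forwards''. The exponent $2^{\ell-1}$ is precisely why $\pi(K_{2^{\ell-1}})$ appears in~\eqref{eq:1}: it is the Tur\'an density above which a dense graph must contain $K_{2^{\ell-1}}$. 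Since $\bfg(n,p)$ is far too sparse to contain even a single copy of $K_{2^{\ell-1}}$ at the given $p$, this clique will instead be located in the reduced graph of a regular partition.

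First I would fix the a.a.s.\ event. Since $p=\omega(n^{-1/m_2(C_\ell)})=\omega(n^{-1+1/(\ell-1)})$, the counting lemma for cycles in sparse random graphs (a consequence of the transference/container results, see~\cite{CG16,BMS14,ST15}) applies: for each $\beta>0$, a.a.s.\ $\bfg$ has the property that any ``$C_\ell$-shaped necklace'' of vertex sets $V_1,\dots,V_\ell$, each of size at least $\beta n$, that are $(\epsilon',p)$-regular of density at least $c\,p$ in the cyclic pattern of $C_\ell$, spans at least $c_1 p^\ell\prod_i|V_i|$ labelled copies of $C_\ell$ for some constant $c_1>0$; we also include the conclusions of Lemma~\ref{lem:RG - properties}. (It is the partite structure of the necklace that makes this work also for odd $\ell$, where a general subgraph of $\bfg(n,p)$ could be $C_\ell$-free.) Now fix $W$ with $|W|\geq\nu n$, a subgraph $G\subseteq\bfg[W]$ satisfying~\eqref{eq:1}, an acyclic orientation $\vec C_\ell$, and an arbitrary orientation $\vec G$ of $G$. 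Applying the sparse regularity lemma to $G$ (which is upper-uniform, being a subgraph of $\bfg[W]$), its reduced graph $R$, defined with an edge for each $(\epsilon',p)$-regular pair of density at least $d\,p$ for a suitable $d=d(\ell,\eps)>0$, has edge-density exceeding $\pi(K_{2^{\ell-1}})+\eps/2$, so by Tur\'an's theorem $R$ contains $K_{2^{\ell-1}}$, i.e.\ parts $V^{(1)},\dots,V^{(2^{\ell-1})}$ that are pairwise $(\epsilon',p)$-regular of density at least $d\,p$ in $G$.

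The crux is then to tame the orientation $\vec G$. For each of the $\binom{2^{\ell-1}}{2}$ pairs among these parts, split its $G$-edges according to $\vec G$ into the two directed bipartite graphs and apply sparse regularity inside the pair; on some sub-pair, shrunk from the original by only a bounded factor, one of the two directions is $(\epsilon'',p)$-regular of density at least $c_0\,p$ for a constant $c_0=c_0(\ell,\eps)>0$, for otherwise both directions would carry too few edges for the pair to have density at least $d\,p$. Recording which direction is chosen for each pair defines a tournament on $2^{\ell-1}$ vertices; processing the pairs one at a time and passing to the sub-pairs shrinks each part by a bounded factor overall (a part lies in $2^{\ell-1}-1$ pairs), so all parts stay of size $\Omega_{\ell,\eps}(\nu n)$ while sub-pairs of already-processed pairs remain regular. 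By the combinatorial core, this tournament has a transitive tournament on $\ell$ of its vertices: after relabelling, parts $V_1,\dots,V_\ell$ such that for all $i<j$ the directed bipartite graph \emph{from} $V_i$ \emph{to} $V_j$ is regular of density at least $c_0\,p$.

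Finally, place a topological ordering $w_1<\dots<w_\ell$ of $\vec C_\ell$ with $w_i\in V_i$. The $\ell$ arcs of $\vec C_\ell$ form a Hamilton cycle on $\{1,\dots,\ell\}$, each arc running from a smaller to a larger index, hence hosted by one of the ``forwards'' directed bipartite graphs just constructed; these $\ell$ regular pairs form exactly a $C_\ell$-shaped necklace of linear-sized, density-at-least-$c_0 p$ regular pairs in $\bfg$. The counting lemma above (with $\beta=\beta(\nu,\ell,\eps)>0$ the linearity constant of the parts) then yields at least $c_1 p^\ell\prod_i|V_i|\geq\eta\,(\nu pn)^\ell$ partite-respecting copies of $C_\ell$ in $G$, where $\eta=\eta(\ell,\eps)>0$ does \emph{not} depend on $\nu$; and every such copy is a copy of $\vec C_\ell$ in $\vec G$, since all its arcs point forwards. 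As this holds for every acyclic $\vec C_\ell$, every $\vec G$, every $G$ and every $W$, we obtain the $(C_\ell,\eta(\nu pn)^\ell)$-orientation property. The only genuinely delicate point is the double use of sparse regularity in the taming step, together with the bookkeeping that keeps the number of parts, the number of shrinking rounds, and the counting-lemma constants depending only on $\ell$ and $\eps$, so that $\eta$ can be fixed before $\nu$.
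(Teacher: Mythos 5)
Your proposal is correct and, at the global level, follows the same strategy as the paper: apply sparse regularity, observe that the reduced graph has density above $\pi(K_{2^{\ell-1}})+\eps/2$ and hence contains $K_{2^{\ell-1}}$, use the fact that every tournament on $2^{\ell-1}$ vertices contains a transitive tournament on $\ell$ vertices (so every acyclic orientation of $C_\ell$ embeds), and finish with a K{\L}R-type counting lemma for cycles in regular ``necklace'' systems inside $\bfg(n,p)$ (this is exactly Lemma~\ref{lem:cycles_in_J} in the paper), with $\eta$ independent of $\nu$ because the counting constant depends only on $\ell$ and the density parameter. The one genuine divergence is how you obtain regular \emph{directed} pairs: the paper applies an oriented version of the sparse regularity lemma (Theorem~\ref{thm:SRL-or}) directly to the orientation $\vec{G}$, so the reduced digraph already records which ordered pairs are dense and regular, and the $K_{2^{\ell-1}}$ found in the underlying reduced graph comes with an orientation for free; you instead regularize the undirected $G$ first, locate $K_{2^{\ell-1}}$ among the parts, and then run a second round of (bipartite) regularization on each of the $\binom{2^{\ell-1}}{2}$ clique pairs to extract, on a sub-pair, one direction that is regular of density $\Omega(p)$, iterating with a slicing argument so that earlier sub-pairs survive later shrinkings. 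Your route works and avoids invoking the directed regularity lemma, but at the cost of the shrinking bookkeeping (each part loses a bounded factor $2^{\ell-1}-1$ times, and the regularity parameter must be chosen ahead of the total shrink so that the slicing lemma and the counting lemma still apply); the paper's single oriented regularization keeps all parts of the original partition and makes the constant-tracking essentially trivial, which is precisely why it packages the orientation into the regularity step. Do note, when writing this up, that the justification ``otherwise both directions would carry too few edges'' should be spelled out as the standard argument that, in a regular partition of the majority direction (density at least $dp/2$), the edges inside irregular or sparse pairs are too few by upper-uniformity, so some regular pair of density at least, say, $dp/4$ survives; and that the final parts should be trimmed to equal size before invoking the counting lemma, which is stated for systems with classes of a common size.
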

	
Lemma~\ref{lem:orient} can be proved with the sparse regularity
method.  A sketch of the proof is given in
Appendix~\ref{sec:pfoforient}.
	
\section{Locally dense rainbow focused graphs} \label{sec:rainbow
  paths}
	
In this section, we prove Lemma \ref{lem:multicolour-paths}. Before
embarking on this, let us sketch the main ideas of the proof. Suppose
we have $\bfg\sim \bfg(n,p)$ as in Lemma \ref{lem:multicolour-paths}
and that $\chi:E(\bfg)\rightarrow \bbn$ is some edge colouring of
$\bfg$. We can further assume that $\chi$ has no monochromatic copy
of~$C_{2k}$ as otherwise the proof is done and so we aim to prove that
either there are lexicographic copies of~$C_{2k}$ for all orderings of
its vertices, or the third conclusion of the lemma holds: that is, the
rainbow focused graph is locally dense.  Firstly, through an
application of Lemma~\ref{lem:well-distributed}, we will show that in
order to prove that the rainbow focused graph is locally dense, it
suffices to find $d|S|^2n^{2k-2}p^{2k-1}$ rainbow copies of $P_{2k}$
with endpoints in $S$ for every $S \subseteq V(\bfg)$ with $|S| = \rho
n$ (see Claim~\ref{clm:many paths}). To simplify this discussion
(whilst still conveying the key ideas of the proof), let us focus on
the case $\rho=1$ and so $S=V(\bfg)$.  We begin by classifying the
vertices of $\bfg$, depending on the profile of colours in their
neighbourhood. In particular, we will be interested in whether there
is some constant proportion of the edges incident to a given vertex
that have all been assigned the same colour by~$\chi$. This leads us to
the following definition.
	
\begin{definition} \label{def:heavy} Let $G$ be an $n$-vertex graph
  and $\chi:E(G)\rightarrow \bbn$ an edge colouring of $G$. Given
  $\alpha>0$ and $c\in \bbn$, we say that
  \begin{itemize}
  \item $v\in V(G)$ is \emph{$(\alpha\,; c)$-heavy} if there
    are at least $\alpha d_G(v)$ edges~$uv$ with $\chi(uv)
    = c$;
  \item $U\subseteq V(G)$ is \emph{$\alpha$-heavy} if every $v\in
    U$ is $(\alpha\,;c)$-heavy for some $c\in \bbn$;
  \item $\lambda:U\rightarrow \bbn$ is \emph{$\alpha$-heavy} if
    for every $c\in \bbn$ all vertices in $\lambda^{-1}(c)\subseteq
    U$ are $(\alpha \,;c)$-heavy.
  \end{itemize}
  Furthermore, a vertex $v\in V(\bfg)$ is \emph{$\alpha$-rainbow} if
  $v$ is not $(\alpha\,;c)$-heavy for \emph{any} $c\in \bbn$.
\end{definition}

Now we discuss how to find the desired rainbow copies of $P_{2k}$, but
to give the precise intuition we start by analysing two extreme cases,
depending whether all the vertices are heavy or all the vertices are
rainbow.

Suppose first that for some small $\alpha<1/2k$, all
vertices $v\in V(\bfg)$ are $\alpha$-rainbow.  In this case, we can
greedily form rainbow paths, up to length $2k-1$, which is our
goal. Indeed, suppose we have some rainbow path $P$ with $\ell < 2k$
vertices such that $P$ features a collection $C\subseteq \bbn$ of
$\ell-1$ colours and $v\in V(\bfg)$ is an endpoint of $P$. As $v$ is
$\alpha$-rainbow, the number of edges incident to $v$ which are
coloured with colours belonging to $C$ is less than $(\ell-1)\alpha
d_\bfg(v)\leq (2k-1)\alpha d_{\bfg}(v)$. Thus as
$\alpha<1/2k$, we have at least $(1/2k)d_{\bfg}(v)$
choices of edge that extend $P$ to a rainbow path with $\ell+1$
vertices. A simple induction then yields the desired number of rainbow
paths with~$2k$ vertices, using that all vertices~$v$ have degree
$d_\bfg(v)=(1\pm o(1))pn$ by Lemma~\ref{lem:RG -
  properties}~\ref{RG: degrees}.
	
At the other extreme, let us now suppose that $V(\bfg)$ is
$\alpha$-heavy and that $\lambda:V(\bfg)\rightarrow \bbn$ is some
$\alpha$-heavy colour allocation. Our approach in this case is again
to greedily extend rainbow paths, always using a heavy colour at the
endpoint of the path to extend it. That is, if $P$ is a rainbow path
with $\ell < 2k$ vertices and $v$ is an endpoint of $P$, we will
extend $P$ by using one of the $\alpha d_\bfg(v)$ edges incident to
$v$ with colour $\lambda(v)$. Of course, in order for this path to be
rainbow, we need that $P$ does not feature the colour $\lambda(v)$
already. To impose this, we need to add extra conditions on our
rainbow paths with the foresight that we do not want to end up in such
a situation. In more detail, suppose we have a rainbow path $P$ with
$\ell$ vertices coloured with a set $C\subset \bbn$ of $\ell-1$
colours and an endpoint $v$ with the added condition that
$\lambda(v)\notin C$. Then we extend along edges $e=vw$ coloured with
$\lambda(v)$ but we also impose that $\lambda(w)\notin C\cup
\{\lambda(v)\}$. This then allows us to continue the inductive process
and extend the rainbow path from $w$. Now in order to get the correct
number of rainbow paths, we need to show that this extra condition of
dodging endpoints~$w$ such that $\lambda(w)\in C\cup \{\lambda(v)\}$
does not restrict the number of choices significantly at each step of
the greedy process. Furthermore, we also need to guarantee that there
are still at least, say, $(\alpha/2)d_\bfg(v)$ edges $e=vw$
such that $\chi(e)=\lambda(v)$ and $\lambda(w)\notin C\cup
\{\lambda(v)\}$. This can be achieved (for almost all $v$) and follows
from proving that, say, $|\lambda^{-1}(c)\cap
N_\bfg(v)|<(\alpha/4k)pn$ for all $c\in \bbn$, which is the
content of the preparatory Lemma~\ref{lem:heavy}~\ref{item:heavy1}
below (the proof of which relies on the assumption that there is no monochromatic $C_{2k}$).

It remains to address the case where we have a mix of heavy and
rainbow vertices. That is, there is some maximal $\alpha$-heavy subset
$U\subset V(\bfg)$ and some $\alpha$-heavy colour allocation
$\lambda:U\rightarrow \bbn$, and also there are $\alpha$-rainbow
vertices outside~$U$.

We will extend the $\alpha$-heavy colour allocation $\lambda$ to an
$\alpha$-rainbow vertex $v\in V(\bfg)$ by setting $\lambda(v)=\star$
for some $\star\notin \bbn$. Now consider the greedy approach as
before and suppose we have a rainbow copy $P$ of $P_\ell$ with colours
$C\subseteq \bbn$ and endpoint $v$. If $\lambda(v)=c\in \bbn$, then we
extend the path $P$ with the edges coloured with colour $c$ and if $\lambda(v)=\star$, we extend with
edges that avoid the colours in $C$. Moreover, similarly to the
previous case where all vertices were heavy, we impose that the new
edge $e=vw$ is such that $\lambda(w)\notin C$. This almost works to
produce rainbow paths iteratively as before but note that in order to
guarantee that a path given by this process is indeed rainbow, we also
need that $\lambda(w)\neq \chi(e)$ as otherwise our path may contain
two consecutive edges of the same colour. In the case that $v$ is
$(\alpha\,;c)$-heavy for some $c\in \bbn$, we can impose that
$\lambda(w)\neq \chi(e)=\lambda(v)$ as in the previous
paragraph. However, in the case that $\lambda(v)=\star$, we have a
problem. Indeed, it could be the case that all edges $e=vw$ incident
to~$v$ such that $\chi(e)\notin C$ are such that
$\chi(e)=\lambda(w)$. What if, for instance, every vertex $u\in U$ has
all incident edges coloured $\lambda(u)$?  In fact, 
Lemma~\ref{lem:heavy}~\ref{item:heavy3} below shows that this cannot
be the case (again under the assumption of no monochromatic $C_{2k}$) and that there must be some subset $W\subseteq U$ such
that every vertex $w\in W$ has some significant proportion of its
incident edges which are coloured in colours that are \emph{not}
$\lambda(w)$. This suggests that we carry out our iterative procedure
for building paths imposing that we only use these edges not coloured
$\lambda(w)$ when our paths enter some $w\in W$. That is, we will only
consider rainbow paths $P$ with endpoints in $W$ if the path with
endpoint $w\in W$ has its last edge coloured in some colour other than
$ \lambda(w)$. This will ensure that we can extend such a path with
edges coloured $\lambda(w)$ as before.
	
In order for this to work, we will need to run our iterative process
with respect to disjoint \emph{layers} of vertices
$U_1,\dots,U_{k-1}$ and setting $U_0$ to be $S$ (recall that  $S=V(\bfg)$ for the purpose of this sketch). These layers are designed in such a way (Claim
\ref{clm:Ui} in the proof of Lemma \ref{lem:multicolour-paths}) that
rainbow paths with endpoints in $U_\ell$ can be extended to rainbow
paths with endpoints in $U_{\ell-1}$ with the added foresight
discussed above that will allow the rainbow paths to be extended
further to $U_{\ell-2}$ and so on. Consequently, we will only count
rainbow paths which respect this vertex partition but this will only
affect the value of our density constant~$d=d(k)$.
	
Finally whilst we can define each $U_{\ell}$ in such a way that each
vertex $v$ in $U_{\ell}$ is adjacent to many edges $vw$ with vertices
$w$ in $U_{\ell-1}$ that are \emph{good} in the sense that
$\chi(vw)\neq \lambda(w)$, we have one final hurdle which prevents us
from finding rainbow paths. We need to find edges in $U_{k-1}$ that
can start our paths and be extended \emph{by both their endpoints} to
$U_{k-2}$ in a rainbow fashion. It turns out that this is not always
possible. Indeed consider the following instructive example. Suppose
every vertex~$v$ in $U_{k-1}$ is $\alpha$-heavy and in fact all
edges from~$v$ to~$U_{k-2}$ 
are coloured~$\lambda(v)$.  Moreover,
suppose that every edge $e=vv'$ contained in $U_{k-1}$ shares its
colour with one of the heavy colours of its endpoints. That is,
$\chi(e)$ is either $\lambda(v)$ or $\lambda(v')$. In such a
colouring, one has that there are \emph{no} rainbow paths of length
three with endpoints in $U_{k-2}$ and an edge in $U_{k-1}$. Our proof
will show that such a colouring is essentially the only type of
colouring that prevents us finding the right number of rainbow
paths. In the case that such a colouring occurs, we will use Lemma
\ref{lem:orient} to prove the existence of lexicographic copies of
$C_{2k}$ with respect to all orderings. This is the content of
Lemma~\ref{lem:lex copies} below. 

First though, given an $\alpha$-heavy set $U\subseteq V(\bfg)$ and an
$\alpha$-heavy allocation $\lambda:U\rightarrow \bbn$, we need the
following definition for an edge $e=uv\in \bfg[U]$:
\begin{linenomath}
  \begin{align*}
  &e=uv\in E(\bfg[U])\text{ is \emph{balanced} if }\lambda(u)\neq
    \lambda(v),\text{ and}\\
  &e=uv\in E(\bfg[U])\text{ is \emph{$u$-light} if }\chi(e)\neq\lambda(u).
  \end{align*}
\end{linenomath}
The concepts of balanced and light edges appear in the following
result, which will be used in the proof of
Lemma~\ref{lem:multicolour-paths} to define the vertex sets $U_i$ in
an appropriate fashion as discussed.
	
\begin{lemma} \label{lem:heavy} For all $\alpha>0$ and
  $2\leq k\in \bbn$ the following holds a.a.s.\ in $\bfg\sim\bfg(n,p)$
  when $p=\omega\left(n^{-1+{1}/(2k-1)}\right)$. For any monochromatic
  $C_{2k}$-free edge colouring $\chi$ of $\bfg$, any $\alpha$-heavy
  $U\subseteq V(\bfg)$ with $|U|\geq \alpha n$ and any $\alpha$-heavy
  colour allocation $\lambda:U\rightarrow \bbn$, we have the
  following.
  \begin{enumerate}[{\rm(1)}]
  \item \label{item:heavy1}For all but at most $\alpha n$ vertices $v$
    of $\bfg$, we have $|N_\bfg(v)\cap \lambda^{-1}(c)|\leq \alpha
    p |U|$ for any $c\in \bbn$.
  \item \label{item:heavy2} $\bfg[U]$ contains at least
    $(1-2\alpha)p|U|^2/2$ balanced edges.
  \item \label{item:heavy3} There exists $W\subseteq U$ such that
    $|W|= \tfrac{1}{8}|U|$ and each vertex $w\in W$ is incident to at
    least $\tfrac{p}{5}|U|$ $w$-light edges in $\bfg$ whose other
    endpoint is in $U\setminus W$.
  \end{enumerate}
\end{lemma}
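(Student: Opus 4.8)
The plan is to first record a few almost‑sure properties of $\bfg$, and then to deduce (1)--(3) deterministically for every valid triple $(\chi,U,\lambda)$. Write $G_c:=\chi^{-1}(c)$ and $S_c:=\lambda^{-1}(c)$, so that $\{S_c\}_c$ is a partition of $U$. Each $w\in S_c$ is $(\alpha;c)$‑heavy and has degree $(1\pm o(1))pn$ by Lemma~\ref{lem:RG - properties}\ref{RG: degrees}, so it sends at least $\alpha(1-o(1))pn$ edges into the monochromatic---hence $C_{2k}$‑free---graph $G_c$; thus $e(G_c)\ge\tfrac12\alpha(1-o(1))pn\,|S_c|$, and since Theorem~\ref{thm:turan} (applied with a constant $\delta=\delta(\alpha,k)$ chosen small, so that every $C_{2k}$‑free subgraph of $\bfg$ carries only an $o(1)$‑fraction of the edges) gives $e(G_c)\le\delta pn^2$, we get $|S_c|\le\beta n$ for a constant $\beta=\beta(\alpha)$ that we may make as small as we wish by shrinking $\delta$. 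Hence $\{S_c\}_c$ is always a partial partition of $V(\bfg)$ into parts of size at most $\beta n$. The two further almost‑sure properties I would establish---both by a union bound over the at most $n^n$ partitions of $V(\bfg)$ into parts of size $\le\beta n$, together with Chernoff‑type estimates and the fact that $pn^2=\omega(n\log n)$---are: (i) for every such partition $\{T_i\}$ one has $\sum_i e_\bfg(T_i)\le\beta pn^2$; and (ii) for every such partition, all but at most $\alpha n$ vertices $v$ satisfy $d_\bfg(v,T_i)\le\alpha^2 pn$ for every $i$.

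Granting these, part (1) is immediate: apply (ii) to $\{S_c\}_c$ and use $\alpha^2 pn\le\alpha p|U|$, which holds because $|U|\ge\alpha n$. For part (2): by Lemma~\ref{lem:RG - properties}\ref{RG: edge count} with error $\alpha/2$ (valid since $|U|\ge\alpha n$) we have $e_\bfg(U)\ge(1-\alpha/2)p|U|^2/2$; an edge of $\bfg[U]$ fails to be balanced exactly when both ends lie in a common $S_c$, so the number of non‑balanced edges is $\sum_c e_\bfg(S_c)\le\beta pn^2$ by (i), and choosing $\beta\le\alpha^3/2$ this is at most $\tfrac\alpha2 p|U|^2$; subtracting leaves at least $(1-2\alpha)p|U|^2/2$ balanced edges.

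For part (3), set $L(w):=d_\bfg(w,U)-d_{G_{\lambda(w)}}(w,U)$, the number of $w$‑light edges of $\bfg$ at $w$ ending in $U$. Then $\sum_{w\in U}L(w)=2e_\bfg(U)-\sum_{w\in U}d_{G_{\lambda(w)}}(w,U)$, and the subtracted term equals $\sum_c\bigl(2e_{G_c}(S_c)+e_{G_c}(S_c,U\setminus S_c)\bigr)$; since distinct $G_c$ are edge‑disjoint, $\sum_c e_{G_c}(S_c,U\setminus S_c)\le e_\bfg(U)$, and $\sum_c 2e_{G_c}(S_c)\le 2\sum_c e_\bfg(S_c)\le 2\beta pn^2$ by (i). Hence $\sum_{w\in U}L(w)\ge e_\bfg(U)-2\beta pn^2\ge(\tfrac12-o(1))p|U|^2$, using $|U|\ge\alpha n$ and $\beta$ small. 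Since $L(w)\le d_\bfg(w,U)\le(1+o(1))p|U|$ for all but a negligible fraction of $w\in U$ by Lemma~\ref{lem:RG - properties}\ref{RG:bad vts}, and those exceptional vertices contribute $o(p|U|^2)$ to the sum, a straightforward averaging yields at least $\tfrac18|U|$ vertices $w\in U$ with $L(w)\ge\tfrac38 p|U|$. Take $W$ to be $\tfrac18|U|$ such (non‑exceptional) vertices; then for $w\in W$, Lemma~\ref{lem:RG - properties}\ref{RG:bad vts} applied to $W$ (of size $\tfrac18|U|\ge\eps n$) gives $d_\bfg(w,W)\le(1+\eps)p|W|\le\tfrac{1+\eps}{8}p|U|$, so $w$ sends at least $\tfrac38 p|U|-\tfrac{1+\eps}{8}p|U|\ge\tfrac p5|U|$ $w$‑light edges to $U\setminus W$, as required.

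The one genuinely delicate ingredient is property (ii), needed only for part (1). Since the per‑part threshold $\alpha^2 pn$ is far below the permitted number $\alpha n$ of exceptional vertices (and there may be unboundedly many parts), neither a first‑moment bound summed over parts nor a bounded‑differences argument is strong enough; one must run the union bound over partitions alongside a Chernoff estimate in which the relevant edge‑sets are first decoupled---for instance by exposing, one at a time, the stars of a hypothetical set of $\alpha n$ exceptional vertices, so that the fresh randomness at each step lies in a set of size $\le\beta n$---and verify that the failure probability beats $n^{-\Theta(n)}$, which it does because $pn^2=\omega(n\log n)$. Properties (i) and the edge‑counting inputs are comparatively routine, so I expect (2) and (3) to go through smoothly once the sparsity of monochromatic colour classes is in hand; it is the concentration step behind (1) that is the crux.
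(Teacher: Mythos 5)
Your overall strategy (use Theorem~\ref{thm:turan} to force every class $\lambda^{-1}(c)$ to be small, then deduce \ref{item:heavy1}--\ref{item:heavy3} from pseudo-random properties of $\bfg$) is the same as the paper's, and your parts \ref{item:heavy2} and \ref{item:heavy3} essentially work: (2) is the paper's argument repackaged, and (3) is a nice unoriented counting alternative to the paper's orientation of the light graph --- the averaging in fact gives about $\tfrac15|U|$ vertices with light degree $\ge\tfrac38 p|U|$, which is enough slack, except for one small slip: applying Lemma~\ref{lem:RG - properties}~\ref{RG:bad vts} to the set $W$ does not guarantee that \emph{every} $w\in W$ satisfies $d_\bfg(w,W)\le(1+\eps)p|W|$, since up to $\eps^2 n$ exceptional vertices for that application may lie inside $W$ itself. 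You need the buffer-and-prune step the paper uses (choose $\tfrac18|U|+\eps^2 n$ candidates, discard the exceptional ones); your surplus of candidates makes this a trivial repair.

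The genuine gap is exactly where you locate it: property (ii), the only input to part \ref{item:heavy1}. The statement is true, but your proposed proof does not work as described. A union bound over the $\le n^n$ partitions, a choice of $\alpha n$ hypothetical bad vertices $B$ and of a part $T_{i(v)}$ for each $v\in B$, followed by sequential exposure of the stars, fails at the conditioning step: when you expose the edges from $v$ into $T_{i(v)}$, up to $|B\cap T_{i(v)}|\le\beta n$ of these potential edges have already been exposed at earlier steps, and $\beta n\gg\alpha^2 pn$ because $p=o(1)$; so, conditionally on an adversarial outcome of the earlier stars, the event $d(v,T_{i(v)})>\alpha^2 pn$ can have probability $1$, and no uniform per-step Chernoff bound is available. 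To rescue this route you need an extra idea, e.g.\ split $d(v,T_{i(v)})$ into the contributions from $V\setminus B$ (these \emph{are} independent across $v\in B$ and Chernoff applies) and from $T_{i(v)}\cap B$, and control the latter globally via a weighted Chernoff bound for $\sum_{v\in B}d(v,T_{i(v)}\cap B)$, whose mean is $\le\alpha\beta pn^2\ll\alpha^3pn^2$; alternatively prove the cleaner per-set statement that a.a.s.\ every $T$ with $|T|\le\beta n$ has at most $\alpha|T|$ vertices $v$ with $d(v,T)>\alpha^2 pn$, and sum over the parts. The paper avoids this difficulty entirely: since each $\lambda^{-1}(c)$ has size at most $\tfrac\alpha8|U|$, it greedily groups the colour classes into $t\le 4/\alpha$ blocks $U_1,\dots,U_t$ of size at most $\tfrac\alpha2|U|$ each, applies the already-established Lemma~\ref{lem:RG - properties}~\ref{RG:bad vts} to each block (which holds simultaneously for all sets, so no union bound over colourings or partitions is needed), and takes the union of the $t$ exceptional sets, of total size $t\eps^2 n\le\alpha n$. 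That bounded-grouping trick is the missing idea; without it (or one of the repairs above), your part \ref{item:heavy1} is not proved.
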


\begin{proof}
  Note that we can assume that $0<\alpha<\tfrac{1}{100}$ as the
  statement is stronger for smaller $\alpha$. Fix $k\in \bbn$,
  $\delta:=\tfrac{\alpha^3}{32}$ and
  $\eps:=\tfrac{\alpha^2}{4(1+\alpha)}$ and let $G$ be any outcome of
  $\bfg(n,p)$ that satisfies the conclusions of Lemma \ref{lem:RG -
    properties} and Theorem \ref{thm:turan}. As these properties all
  hold a.a.s.\ in $\bfg(n,p)$, it suffices to show that any such $G$
  satisfies the desired conclusions. So fix such a $G$ and
  furthermore, fix a monochromatic $C_{2k}$-free edge colouring $\chi$
  of $G$, some $\alpha$-heavy set $U$ with $|U|\geq \alpha n$ and some
  $\alpha$-heavy colour allocation $\lambda:U\rightarrow
  \bbn$. Appealing to the conclusion of Lemma~\ref{lem:RG -
    properties}~\ref{RG: degrees}, we have that each $u\in U$ has
  $d_G(u)\geq (1-\eps)pn$ and so has at least $\alpha d_G(u)\geq
  \tfrac{\alpha}{2} p n$ incident edges coloured $\lambda(u)$ by
  $\chi$. Therefore, for each $c\in \bbn$, we have
  $|\lambda^{-1}(c)|\leq \frac{\alpha}{8}|U|$.  Indeed if this
  was not the case then there would be at least $\delta p n^2$ edges
  of $G$ coloured $c$ by $\chi$ and by (the assumed conclusion of)
  Theorem \ref{thm:turan}, there is a copy of $C_{2k}$ in $G$ coloured
  $c$, contradicting the fact that $\chi$ is monochromatic
  $C_{2k}$-free.
  
  In order to prove part~\ref{item:heavy1} of the lemma, we
  partition $U$ as $U=S_1\cup\dots\cup S_t$ for some $t\leq
  \tfrac{4}{\alpha}$ such that each~$S_i$ has size $|S_i|\leq
  \tfrac{\alpha}{2}|U|$ and for each $c\in \bbn$, there is some $i\in
  [t]$ such that $\lambda^{-1}(c)\subseteq S_i$. In other words, we
  group the sets $\lambda^{-1}(c)$ for $c\in \bbn$ together in order
  to get at most $\tfrac{4}{\alpha}$ groups, each of which has size at
  most $\tfrac{\alpha}{2}|U|$. This can be done greedily by adding
  sets $\lambda^{-1}(c)$ to $S_1$ until $S_1$ has size at least
  $\tfrac{\alpha}{4} |U|$, and then moving on to fill~$S_2$ and
  repeating until all the sets $\lambda^{-1}(c)$ have been placed.
  Now by (the assumed conclusion of) Lemma~\ref{lem:RG -
    properties}~\ref{RG:bad vts}, we have that for each $1\leq i \leq
  t$, there is 
  a set $B_i\subseteq V(G)$ with $|B_i|\leq \eps^2 n$ such that every
  vertex $v\in V(G)\setminus B_i$ has $d_G(v,S_i)\leq \alpha
  p|U|$. Note here that even if~$S_i$ has size less than $\eps n$, the
  conclusion of Lemma~\ref{lem:RG - properties}~\ref{RG:bad vts} can
  be applied to a superset of~$S_i$. Letting $B=\bigcup_{i=1}^tB_i$, for
  any $c\in \bbn$ and any vertex $v\in V(G)\setminus B$ we have that
  $|N_G(v)\cap \lambda^{-1}(c)|\leq \alpha p |U|$ as
  $\lambda^{-1}(c)\subseteq S_i$ for some $i\in t$. Noting that
  $|B|\leq t\eps^2n\leq \alpha n$ then completes the proof of~\ref{item:heavy1}.

  For part~\ref{item:heavy2} of the lemma, we note that any edge in
  $G[U]$ that is not contained in~$G[S_i]$ for some $i\in [t]$, is
  balanced. Indeed this follows from the fact that each
  $\lambda^{-1}(c)$ is contained in some~$S_i$. Hence the number of
  balanced edges in $G[U]$ is at least
  \[e_G(U)-\sum_{i=1}^te_G(S_i)\geq
    \frac{(1-\eps)p|U|^2}{2}-\frac{t(1+\eps) p
      \left(\frac{\alpha}{2}|U|\right)^2}{2}\geq \frac{(1-2\alpha) p
      |U|^2}{2},\]
  as required, using the assumed conclusion of Lemma~\ref{lem:RG -
    properties}~\ref{RG: edge count} to lower bound $e_G(U)$ and
  upper bound the~$e_G(S_i)$ as well as the upper bounds on~$|S_i|$
  for each $i\in [t]$ and the upper bound on $t$.
		
  For part~\ref{item:heavy3}, we will say that an edge $e=uv$ is
  \emph{light} if it is $u$-light or $v$-light. Note that every
  balanced edge is necessarily light and define $G^*$ to be the
  subgraph of $G[U]$ given by all the light edges. Moreover, for each
  edge $e=uv\in E(G^*)$, we orient the edge so that if it is directed
  from $u$ to $v$ then the edge $e$ is $u$-light.  Note that for some
  edges $e=uv\in G^*$, both orientations are allowed as $\chi(e)\notin
  \{\lambda(v),\lambda(u)\}$; for such edges choose an orientation
  arbitrarily. Now let $V_0\subseteq U$ be the set of vertices $v\in
  U$, such that $d^+_{G^*}(v)\geq \tfrac{2p}{5}|U|$, where
  $d^+_{G^*}(v)$ denotes the number of out-neighbours of $u$ in our
  oriented $G^*$. We claim that $|V_0|\geq\tfrac{1}{6}|U|$. Indeed, if
  this is not the case, then consider a set $V_1\subseteq U\setminus
  V_0$ with $|V_1|=\tfrac{5}{6}|U|$ and $V_2=U\setminus V_1$ so
  that $|V_2|=\tfrac{1}{6}|U|$ and $V_0\subseteq V_2$. We upper bound
  the number of edges in $G^*$ by
  \begin{linenomath}
    \begin{align*}
      e_{G^*}(V_1)+e_{G^*}(V_1,V_2)+e_{G^*}(V_2)
      &\leq \sum_{v\in
        V_1}d^+_{G^*}(v)+e_{G}(V_1,V_2)+e_G(V_2) \\
      &\leq |V_1|\cdot
        \frac{2p}{5}|U|+(1+\eps)p|V_1||V_2|+(1+\eps)\frac{p
        |V_2|^2}{2} \\
      &\leq
        \left(\frac{1}{3}+(1+\eps)\left(\frac{5}{36}+\frac{1}{72}\right)
        \right)p|U|^2<
        \frac{(1-2\alpha)p|U|^2}{2},
    \end{align*}
  \end{linenomath}
  contradicting part~\ref{item:heavy2} of the lemma, as every
  balanced edge is light. We used here the assumed conclusion of
  Lemma~\ref{lem:RG - properties}~\ref{RG: edge count}
  and~\ref{RG:edges between subsets} in $G$.
		
  We then know that $|V_0|\geq \tfrac{1}{6}|U|$ and we fix
  $W'\subseteq V_0$ such that $|W'|=\tfrac{1}{8}|U|+\eps^2n$. By the
  assumed conclusion of Lemma~\ref{lem:RG - properties}~\ref{RG:bad
    vts}, we have that there is some $W\subseteq W'$ with
  $|W|=\tfrac{1}{8}|U|$ and every vertex $w\in W$ has $d_G(w,W)\leq
  d_G(w,W')\leq (1+\eps)p|W'|\leq \tfrac{1}{5}p|U|$. Therefore as
  $W\subseteq V_0$, each vertex $v\in W$ is incident to at least
  $\left(\tfrac{2}{5}-\tfrac{1}{5}\right)p|U|=\tfrac{1}{5}p|U|$
  $v$-light edges whose other endpoint is in $U\setminus W$, as
  required.
\end{proof}
	
We will also need the following lemma which will be used to find
lexicographic copies of $C_{2k}$ in the case that certain colourings
are present. Recall that an edge $e=uv\in G[U]$ {$u$-light} if
$\chi(e)\neq\lambda(u)$ and balanced if $\lambda(u)\neq
\lambda(v)$. Moreover, call an edge $e=uv\in G[U]$ \emph{totally
  light} if $\chi(e)\notin \{\lambda(u),\lambda(v)\}$, that is, if it
is light with respect to \emph{both} of its vertices.
	
\begin{lemma} \label{lem:lex copies} For all positive~$\nu$
  and~$\alpha$ and $2\leq k\in \bbn$ the following holds a.a.s.\ in
  $\bfg\sim\bfg(n,p)$ when $p=\omega\left(n^{-1+{1}/(2k-1)}\right)$.
  For any monochromatic $C_{2k}$-free edge colouring $\chi$ of $\bfg$,
  any $\alpha$-heavy $U\subseteq V(\bfg)$ with $|U|\geq \nu n$ and any
  $\alpha$-heavy colour allocation $\lambda:U\rightarrow \bbn$, we
  have the following.

  Either $\bfg[U]$ contains $2^{-(8k+1)}p|U|^2$ balanced and
  totally light edges, or for all orderings~$\sigma$ of~$V(C_{2k})$,
  the graph $\bfg[U]$ contains a copy of $C_{2k}$ which is
  lexicographic with respect to $\sigma$.
\end{lemma}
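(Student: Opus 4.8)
The plan is to establish the dichotomy by assuming $\bfg[U]$ has fewer than $\tfrac{p}{2^{8k+1}}|U|^2$ balanced totally light edges and producing, for an arbitrary ordering $\sigma$ of $V(C_{2k})$, a lexicographic copy of $C_{2k}$ inside $\bfg[U]$. Since decreasing $\alpha$ only weakens the hypotheses — any $\alpha$-heavy set or allocation is $\alpha'$-heavy for every $\alpha'\le\alpha$, while ``balanced'', ``totally light'' and the conclusion do not refer to $\alpha$ — we may assume throughout that $\alpha$ is as small as we wish in terms of $k$ and $\nu$ (in particular $\alpha\le\nu$ and $\alpha\le 2/k^2$). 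Put $\epsilon:=\tfrac14\bigl(1-\pi(K_{2^{2k-1}})\bigr)>0$, which depends only on $k$; let $\eta=\eta(k)>0$ be the constant produced by Lemma~\ref{lem:orient} for $\ell=2k$ and this $\epsilon$; and condition on $\bfg$ satisfying the (a.a.s.) conclusions of Lemmas~\ref{lem:RG - properties}, \ref{lem:heavy}, \ref{lem:intersecting upper bound} (with $\zeta=\alpha/4$) and \ref{lem:orient} (with $\ell=2k$, this $\epsilon$, and the given $\nu$), the intersection of which is a.a.s.

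The key observation is that balanced edges that fail to be totally light carry a canonical orientation encoding a lexicographic pattern. Let $G^{**}$ be the subgraph of $\bfg[U]$ consisting of all balanced edges that are not totally light; for such an edge $uv$ we have $\lambda(u)\neq\lambda(v)$ and $\chi(uv)\in\{\lambda(u),\lambda(v)\}$, so $\chi(uv)$ equals exactly one of $\lambda(u),\lambda(v)$, and we orient $uv$ towards that endpoint, obtaining an orientation $\vec G^{**}$ in which every in-edge of a vertex $w$ has colour $\lambda(w)$. By Lemma~\ref{lem:heavy}~\ref{item:heavy2} there are at least $(1-2\alpha)p|U|^2/2$ balanced edges, of which fewer than $2^{-8k}\cdot p|U|^2/2$ are totally light by hypothesis, so $e(G^{**})\ge(1-2\alpha-2^{-8k})p|U|^2/2\ge(\pi(K_{2^{2k-1}})+\epsilon)p\binom{|U|}{2}$ once $\alpha$ is small enough (using that $2^{-8k}$ is far below $1-\pi(K_{2^{2k-1}})=1/(2^{2k-1}-1)$). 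As $|U|\ge\nu n$, Lemma~\ref{lem:orient} applied with $W=U$ and $G=G^{**}$ says $G^{**}$ has the $(C_{2k},\eta(\nu pn)^{2k})$-orientation property; in particular $\vec G^{**}$ contains at least $\eta(\nu pn)^{2k}$ labelled copies of \emph{every} acyclic orientation of $C_{2k}$.

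Now fix an ordering $\sigma$ of $V(C_{2k})$ and let $\vec C_{2k}^{\,\sigma}$ be the acyclic orientation of $C_{2k}$ that directs each edge from its $\sigma$-larger to its $\sigma$-smaller endpoint. Suppose a labelled copy of $\vec C_{2k}^{\,\sigma}$ in $\vec G^{**}$, on vertices $v_1,\dots,v_{2k}$, has $\lambda(v_1),\dots,\lambda(v_{2k})$ pairwise distinct; I claim the underlying $C_{2k}$ is lexicographic with respect to $\sigma$. Indeed, set $\phi(v_i)=\lambda(v_i)$ for every $v_i$ that is not a source of $\vec C_{2k}^{\,\sigma}$ and give the sources fresh pairwise distinct colours; then $\phi$ is injective, and for every cycle edge $e=v_iv_j$ with $v_i<_\sigma v_j$ the corresponding edge of $\vec G^{**}$ is directed $v_j\to v_i$, so by the orientation rule $\chi(e)=\lambda(v_i)=\phi(v_i)$, which is exactly the lexicographic colour condition with respect to $\sigma$. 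It remains to find one such copy with distinct $\lambda$-values among the $\eta(\nu pn)^{2k}$ available. As in the proof of Lemma~\ref{lem:heavy}, $|\lambda^{-1}(c)|\le\tfrac\alpha8|U|$ for all $c$, so the colour classes can be greedily grouped into at most $16/\alpha$ sets $Q_1,\dots,Q_t\subseteq U$, each a union of colour classes of total size at most $\tfrac\alpha4|U|\le\tfrac\alpha4 n$; any copy with a repeated $\lambda$-value meets some $Q_s$ in at least two vertices, and by Lemma~\ref{lem:intersecting upper bound} (with $\zeta=\alpha/4$) the number of labelled copies of $C_{2k}$ in $\bfg$ doing so is $O_k(\alpha)\,(pn)^{2k}$, which is less than $\eta(\nu pn)^{2k}$ once $\alpha$ is small enough in terms of $k$ and $\nu$. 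Forgetting orientations injects the oriented copies into the labelled copies of $C_{2k}$ in $\bfg$, so a copy with distinct $\lambda$-values — hence a lexicographic copy with respect to $\sigma$ — exists; as $\sigma$ was arbitrary, this completes the proof.

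The main obstacle is the translation in the second paragraph: recognising that ``balanced, not totally light'' is precisely the local condition under which an edge can be oriented towards the endpoint supplying its colour, so that Lemma~\ref{lem:orient}'s count of directed copies of an arbitrary acyclic orientation of $C_{2k}$ turns directly into a count of lexicographic copies. The secondary technical point is that the forced vertex-colouring $\phi=\lambda$ must be injective on the chosen copy, which is why the few copies with repeated $\lambda$-values have to be discarded — a loss that is controlled, after grouping the colour classes, exactly by Lemma~\ref{lem:intersecting upper bound}.
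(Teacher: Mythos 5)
Your proposal is correct and takes essentially the same route as the paper: after removing the few totally light balanced edges, you orient each remaining balanced edge according to which endpoint's $\lambda$-value it carries (the paper uses the reverse convention, which is immaterial), invoke Lemma~\ref{lem:orient} on this dense subgraph of $\bfg[U]$ to get many copies of the acyclic orientation encoding $\sigma$, and discard the copies with repeated $\lambda$-values by grouping the small colour classes and applying Lemma~\ref{lem:intersecting upper bound}. The only minor point is that the bound $|\lambda^{-1}(c)|\le\tfrac{\alpha}{8}|U|$ obtained ``as in the proof of Lemma~\ref{lem:heavy}'' also relies on the a.a.s.\ conclusion of Theorem~\ref{thm:turan} (together with the degree bound), so that property should be added to your list of conditioned events, exactly as the paper does.
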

\begin{proof}
  Let~$\nu>0$ be fixed.
  Note that we can assume that $0<\alpha<\min\{2^{-16k},\nu\}$ as the
  statement of the lemma is stronger for smaller $\alpha$. Moreover,
  let $\eta>0$ be as output by Lemma~\ref{lem:orient} with input
  $\ell=2k$ and $\eps=2^{-2\ell}$.  Fix $0<\zeta<\eta \nu^{2k}/(8k)$ and
  $\delta=\zeta\alpha/8$ and let $G$ be an instance of $\bfg(n,p)$
  satisfying the conclusions of Theorem \ref{thm:turan} and Lemmas
  \ref{lem:intersecting upper bound}, \ref{lem:orient} and
  \ref{lem:heavy}. It suffices to show that such a $G$ satisfies the
  desired statement as all the listed properties of $G$ occur a.a.s.\
  in~$\bfg(n,p)$. So fix such a $G$ and a monochromatic $C_{2k}$-free
  colouring $\chi$. Further, fix some $\alpha$-heavy vertex subset
  $U\subseteq V(G)$ with $|U|\geq \nu n$ and an $\alpha$-heavy colour
  allocation $\lambda:U\rightarrow \bbn$.
		
  As in the proof of Lemma \ref{lem:heavy}, we can assume that
  $|\lambda^{-1}(c)|\leq \zeta n/2$ for all $c\in \bbn$, as otherwise
  such a colour class would have at least $\delta pn^2$ edges and thus
  contain a copy of $C_{2k}$ by the assumed conclusion of Theorem
  \ref{thm:turan}. As we have assumed that our colouring $\chi$ of $G$
  is monochromatic $C_{2k}$-free, this gives a
  contradiction. Therefore each $\lambda^{-1}(c)$ has size less than
  $\zeta n/2$. Again, as in the proof of Lemma \ref{lem:heavy}, we can
  therefore partition $U$ as $U=S_1\cup\cdots \cup S_t$ for some
  $t\leq {2/\zeta}$ such that each $S_i$ has size $ |S_i|\leq \zeta n$
  and for each $c\in \bbn$, there is some $i\in [t]$ such that
  $\lambda^{-1}(c)\subseteq S_i$.
		
  Now if there are $2^{-(8k+1)}p|U|^2$ balanced edges in $G[U]$
  that are also totally light, we are done. So suppose this is not the
  case. Then by appealing to the assumed conclusion of
  Lemma~\ref{lem:heavy}~\ref{item:heavy2}, there is some subgraph
  $G^*$ of~$G[U]$ such that every edge in $G^*$ is balanced (and therefore
  light) but \emph{not} totally light and
  \[e(G^*)\geq \left(1-2\alpha-2^{-8k}\right)p|U|^2/2\geq
    (1-2^{-4k})p|U|^2/2.\]
  We orient $G^*$ as follows. Each edge $e=uv\in E(G^*)$ is light with
  respect to exactly one of its vertices, say $v$, and we orient $e$
  from $u$ to $v$. Thus each directed edge $\vec{uv}$ of $G^*$ gives
  an edge $e=uv$ in $G$ such that $\chi(e)=\lambda(u)$. Indeed $e$ is
  light with respect to $v$ and so $\chi(e)\neq \lambda(v)$ but $e$ is
  \emph{not} totally light and so we must have that
  $\chi(e)=\lambda(u)$. By the assumed conclusion of Lemma
  \ref{lem:orient} and our definition of $\eps$, we have that our
  orientation of $G^*$ contains $\eta (\nu np)^{2k}$ copies of
  $\vec{H}$ for any acyclic orientation $\vec{H}$ of $C_{2k}$.
		
  Now consider some arbitrary ordering $\sigma$ of $V(C_{2k})$ and let
  $\vec{H}$ be the orientation of $C_{2k}$ which has each edge
  oriented from its first to its second vertex, according to the
  ordering $\sigma$. Note that such an orientation $\vec{H}$ is
  necessarily acyclic. Moreover, any copy of $\vec{H}$ in $G^*$
  corresponds to a copy of $C_{2k}$ in $G$ where each edge $e=uv$ is
  coloured $\lambda(\min\{u,v\})$ where the minimum is taken with
  respect to the ordering $\sigma$. Now if $\lambda$ is injective on
  the vertices of such a copy of $C_{2k}$, then this copy would be
  lexicographic with respect to $\sigma$. In order to guarantee the
  existence of such a lexicographic copy we upper bound the number of
  copies of $C_{2k}$ that have more than one vertex in some set
  $\lambda^{-1}(c)$. In fact, we bound the number of copies of
  $C_{2k}$ with more than one vertex in $S_i$ for some $i\in [t]$,
  which suffices as for each $c\in \bbn$, there is an $i\in [t]$ with
  $\lambda^{-1}(c)\subseteq S_i$. For a fixed $i\in [t]$, by the
  assumed conclusion of Lemma \ref{lem:intersecting upper bound},
  there are at most $k\zeta^2(np)^{2k}$ copies of $C_{2k}$
  intersecting $S_i$ in more than one vertex. Thus, there are at most  
  \[t\cdot k \zeta^2(np)^{2k}\leq2k \zeta(np)^{2k} <\eta (\nu n
    p)^{2k}/4\]
  copies of $C_{2k}$ with a pair of distinct vertices $u$ and $v$ such
  that $\lambda(u)=\lambda(v)$. Hence at least one of the copies of
  our oriented cycle $\vec{H}$ that came from the application of Lemma
  \ref{lem:orient} corresponds to a copy of $C_{2k}$ which is
  lexicographic with respect to $\sigma$. As $\sigma$ was an arbitrary
  ordering of~$V(C_{2k})$, this completes the proof.
\end{proof}

With Lemmas~\ref{lem:heavy} and~\ref{lem:lex copies} at hand, we
are ready to prove Lemma \ref{lem:multicolour-paths}.

\begin{proof}[Proof of Lemma~\ref{lem:multicolour-paths}] Fix $2\leq
  k\in \bbn$, $0<\rho\leq 1$ and
  $p=\omega\left(n^{-1+{1}/({2k-1})}\right)$ as in the statement of
  the lemma. Note that we can assume $\rho<1/4$ as the statement of
  the lemma is stronger for smaller $\rho$. We also define the
  following constants for $i=0,1,\ldots,k-1$:
  \begin{equation} \label{eq:constants}
    \gamma_i:=\frac{1}{4}\left(\frac{1}{20}\right)^i, \quad \theta:=
    \frac{\gamma_{k-1}}{2^{4k}}, \quad d:=\frac{1}{4}\cdot
    \left(\frac{\theta}{4}\right)^{2k}\cdot\prod_{i=1}^{k-1}\gamma^2_i,
    \quad \xi:=\rho^2d, \quad \mbox{ and } \quad \eps=\alpha:=\xi^2.
  \end{equation} 
  Recall from Definition~\ref{def:path count} that, given a graph $G$
  on vertex set $[n]$, a vertex $v$ and a pair of vertices $f$, we
  denote the number of paths with $\ell$ vertices in $G$ with
  endpoints $f$ by $X^\ell_G(f)$. Furthermore, the number of
  copies of $P_\ell$ in $G$ that have $v$ as an endpoint is denoted by
  $X^\ell_G(v)$.  Also, we will frequently use the concept of heavy
  vertices, sets, and colour allocations, as in
  Definition~\ref{def:heavy}.

  We let $\cG$ be the collection of $n$-vertex graphs $G$ satisfying
  the conclusions of Lemma \ref{lem:RG - properties}, which contains
  some simple properties of $G(n,p)$, of Lemma
  \ref{lem:well-distributed} with $\ell=2k$, concerning the number of
  paths between some pairs of vertices in $G(n,p)$, and also
  satisfying the two preparatory results, Lemma~\ref{lem:heavy} and
  Lemma~\ref{lem:lex copies} with $\nu:=\gamma_{k-1}$. Thus, a.a.s.\
  $\bfg\sim\bfg(n,p)$ is a subgraph of $\cG$ and so it suffices to show the desired conclusion for
  any $G\in \cG$.
		
  Fix some $G\in \cG$ and a colouring $\chi:E(G)\rightarrow \bbn$. Now
  assume that neither of the conclusions \ref{item:mono cycle} nor
  \ref{item:lexico} hold in our colouring and so we aim to prove that
  condition \ref{item:rainbow} holds, which means to prove that the
  rainbow focused graph $\Gamma = \Gamma(2k, G, \chi)$ is
  $(\rho,d)$-dense. In particular, we can assume that $\chi$ is
  monochromatic $C_{2k}$-free and so $G$ and $\chi$ satisfy~\ref{item:heavy1}, \ref{item:heavy2} and~\ref{item:heavy3} of
  Lemma~\ref{lem:heavy} for all $U$ and $\lambda$ as in that lemma. We
  claim that it is enough to prove the following claim.

  \begin{claim} \label{clm:many paths} For every vertex set $S\subset
    V(G)$ with $|S|=\rho n$, there are at least
    $2d\rho^2n^{2k}p^{2k-1}$ rainbow copies of~$P_{2k}$ in~$G$
    whose endpoints lie in~$S$.
  \end{claim}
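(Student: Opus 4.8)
The plan is to prove Claim~\ref{clm:many paths} by growing rainbow copies of $P_{2k}$ \emph{outwards} from a central edge, through $k-1$ nested ``shells'' of vertices on each side, using the heavy/rainbow dichotomy of Definition~\ref{def:heavy} to keep $\Omega(pn)$ legal extensions available at every step while the path stays rainbow. To set up, I would let $U\subseteq V(G)$ be the set of vertices that are $(\alpha\,;c)$-heavy for some~$c$, fix a heavy colour allocation $\lambda\colon U\to\bbn$, and extend $\lambda$ to $V(G)$ by putting $\lambda(v)=\star$ (a new symbol) at every $\alpha$-rainbow vertex. Since $\chi$ is monochromatic $C_{2k}$-free (otherwise conclusion~\ref{item:mono cycle} holds), Theorem~\ref{thm:turan} forces every colour class to be sparse, so—exactly as at the start of the proofs of Lemmas~\ref{lem:heavy} and~\ref{lem:lex copies}—every fibre $\lambda^{-1}(c)$ has size at most a fixed small constant times~$n$; combined with Lemma~\ref{lem:heavy}~\ref{item:heavy1} and Lemma~\ref{lem:RG - properties}~\ref{RG: degrees},~\ref{RG:bad vts}, this yields a small exceptional set $B$ with $|B|=O(\alpha n)$ outside which every $v$ satisfies $d_G(v)=(1\pm\eps)pn$, $d_G(v,S)=(1\pm\eps)\rho pn$, and $|N_G(v)\cap\lambda^{-1}(c)|\le\alpha pn$ for all~$c$. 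A short case split separates the regime where $|U|$ is large enough to feed Lemmas~\ref{lem:heavy} and~\ref{lem:lex copies} (applied with $\nu=\gamma_{k-1}$) from the regime where $|U|$ is so small that almost every vertex is $\alpha$-rainbow and a direct greedy extension already works; below I describe the former, which carries the difficulty.

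The heart of the proof is the construction of pairwise disjoint sets $U_{k-1},U_{k-2},\dots,U_1$ with $|U_i|\ge\gamma_i n$ such that every non-exceptional $v\in U_i$ has at least $2\gamma_{i-1}pn$ \emph{pre-good} neighbours in $U_{i-1}$, and every non-exceptional $v\in U_1$ has at least $\tfrac12\rho pn$ pre-good neighbours in~$S$; here an edge $vw$ is pre-good if it is one along which the greedy process may legally extend from~$v$, namely $\chi(vw)=\lambda(v)$ with $\lambda(w)\ne\lambda(v)$ when $v$ is heavy, and $\lambda(w)\ne\chi(vw)$ when $v$ is $\alpha$-rainbow—so that in either case the new edge carries a fresh colour and leaves $\lambda(w)$ available for the next step. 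The shells are built top-down: given $U_{i-1}$, let $U_i$ consist of the non-exceptional vertices with enough pre-good edges into $U_{i-1}$; for $\alpha$-rainbow vertices a constant fraction survive by the degree estimates, while for heavy vertices one needs that a constant proportion of their $\lambda(v)$-coloured edges reach suitable light targets in $U_{i-1}$—this is exactly what the balanced-edge count of Lemma~\ref{lem:heavy}~\ref{item:heavy2} and the light-edge set $W$ of Lemma~\ref{lem:heavy}~\ref{item:heavy3} supply, and the geometric decay of the $\gamma_i$ reflects the constant loss per shell. For the central edge I need one that can be grown on \emph{both} sides: Lemma~\ref{lem:lex copies}, applied with $\nu=\gamma_{k-1}$, shows that either there are $\ge\theta p|U_{k-1}|^2$ \emph{balanced and totally light} edges inside $U_{k-1}$—precisely those whose colour avoids both endpoints' heavy colours, so that both endpoints may extend in their heavy colours without repetition—or $G$ already contains lexicographic copies of $C_{2k}$ with respect to every ordering~$\sigma$, i.e.\ conclusion~\ref{item:lexico} holds and there is nothing to prove.

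Given these ingredients the count is a routine induction. For each balanced totally light edge $u_{k-1}u_{k-1}'$ of $G[U_{k-1}]$, extend both endpoints: having reached $u_i\in U_i$ along a rainbow path whose last edge is not coloured $\lambda(u_i)$ (vacuously true at the start, since the central edge is totally light), pick a pre-good neighbour of $u_i$ in $U_{i-1}$ avoiding the $\le 2k$ colours and $\le 2k$ vertices already used; by Lemma~\ref{lem:heavy}~\ref{item:heavy1} and the definition of pre-good this forbids only $O(k\alpha pn)\le\tfrac12\gamma_{i-1}pn$ of the available choices, the chosen edge is legal, and the path remains rainbow (the new colour is fresh and differs from $\lambda(u_{i-1})$, so the next step is again admissible). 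Running this through the $2(k-2)$ shell-steps and the two final steps into~$S$, and using $|U_i|\ge\gamma_i n$ together with the degree estimates, produces at least
\[
 \tfrac12\,\theta p\,|U_{k-1}|^{2}\cdot\Bigl(\prod_{i=1}^{k-2}\tfrac12\gamma_i pn\Bigr)^{2}\cdot\bigl(\tfrac13\rho pn\bigr)^{2}\ \ge\ 2d\rho^{2}\,n^{2k}p^{2k-1}
\]
rainbow copies of $P_{2k}$ with both endpoints in~$S$ (the leading $\tfrac12$ absorbing $\operatorname{aut}(P_{2k})$), with all slack absorbed into the definition of~$d$ in~\eqref{eq:constants}.

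I expect the main obstacle to be the innermost step: one genuinely needs a central edge that extends on \emph{both} sides, and the colourings that block this are exactly the rigid ones—every edge inside the heavy set repeating a heavy colour of one of its endpoints, and every heavy vertex sending all its inward edges in its heavy colour—which are precisely the configurations that Lemma~\ref{lem:lex copies} converts into a complete family of lexicographic copies of $C_{2k}$, so that having assumed conclusion~\ref{item:lexico} fails removes them. A secondary technical point, and the reason for the $k-1$ geometrically shrinking shells, is threading the ``last edge not coloured $\lambda(\cdot)$'' invariant correctly from the centre out to~$S$ through the heavy and the $\alpha$-rainbow vertices alike, so that all $2(k-1)$ greedy extensions stay legal simultaneously.
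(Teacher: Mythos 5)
Your overall strategy is the same as the paper's: build nested shells $U_{k-1},\dots,U_1$ outside $U_0=S$ with heavy/$\star$ colour allocations, obtain the central edges inside $U_{k-1}$ from Lemma~\ref{lem:lex copies} (balanced and totally light edges, or else conclusion~\ref{item:lexico} holds), control the bookkeeping with Lemma~\ref{lem:heavy}~\ref{item:heavy1}--\ref{item:heavy3} and Lemma~\ref{lem:RG - properties}, and extend greedily on both sides maintaining the invariant that the endpoints' allocated colours avoid $\chi(P)$. This is exactly the architecture of Claim~\ref{clm:Ui} and the induction that follows it.

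There is, however, one genuine gap: you fix a \emph{single global} allocation $\lambda$ (heavy colour at globally heavy vertices, $\star$ at globally $\alpha$-rainbow ones) at the outset, and you force every globally heavy vertex $v$ to extend only along edges of colour $\lambda(v)$. Nothing you cite guarantees that enough vertices have many pre-good edges \emph{in their global heavy colour} into the previously constructed shell, so the shell sizes $|U_i|\geq\gamma_i n$ do not follow from Lemma~\ref{lem:heavy}~\ref{item:heavy2},\ref{item:heavy3} as claimed. Concretely, suppose every vertex $v$ is $(\alpha;c_v)$-heavy with a personal colour $c_v$, all $c_v$-coloured edges at $v$ point into one fixed set $T$ of size about $\alpha n$ disjoint from $S$, and all remaining edges receive pairwise distinct colours. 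Then there is no monochromatic $C_{2k}$, rainbow copies of $P_{2k}$ with endpoints in $S$ abound, but in your scheme no vertex has any pre-good edge into $S$ in its global heavy colour and no vertex is globally $\alpha$-rainbow, so already $U_1$ is empty and the count collapses. The paper avoids this by \emph{re-classifying vertices at each shell}: the allocation $\lambda_i$ is defined relative to the bipartite graph $G'$ of edges into $U_{i-1}$ that are light at the $U_{i-1}$ end (a vertex gets $\lambda_i(z)=c$ only if $\theta p|U_{i-1}|$ of \emph{those} edges are coloured $c$, and $\star$ otherwise, in which case a pigeonhole bound gives condition~\ref{type star}), and an explicit continuation condition (item~\ref{item:continue} in the proof of Claim~\ref{clm:Ui}, fed by Lemma~\ref{lem:heavy}~\ref{item:heavy3}) guarantees that $Z'$, hence $U_i$, is large. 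In the scenario above the paper's $\lambda_i$ is identically $\star$ even though every vertex is globally heavy, which is precisely what rescues the construction; your proposal is missing this local re-allocation idea (a minor further point: Lemma~\ref{lem:lex copies} yields $p|U_{k-1}|^2/2^{8k+1}$ central edges, not $\theta p|U_{k-1}|^2$, but that only shifts constants).
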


  In order to see that Claim \ref{clm:many paths} suffices, note that
  by Remark \ref{rem:simple local dense}, in order to prove that
  $\Gamma=\Gamma(2k,G,\chi)$ is $(\rho,d)$-dense, we just need to show
  that every $S$ as in the claim contains at least $d(\rho n)^2/2$
  edges of $\Gamma$. Now, fixing such an $S$, for $f\in \binom{S}{2}$,
  let $Y(f)$ be the number of rainbow copies of $P_{2k}$ in $G$
  with endpoints~$f$. Let $F^+\subseteq
  \binom{S}{2}$ be the set of pairs of vertices~$f$ such that
  $Y(f)>0$ and $F^{++}\subseteq F^+$ be the set of pairs $f\in
  F^+$ such that $Y(f)> 2n^{2k-2}p^{2k-1}$.

  Let $F(G)$ be the collection of pairs $f\in \binom{[n]}{2}$ such
  that $X_G^{2k}(f)>2n^{2k-2}p^{2k-1}$. Since $Y(f)\leq X_G^{2k}(f)$
  for all pairs $f\in \binom{S}{2}$, we have that
  \begin{equation} \label{eq:sum of Ys} \sum_{f\in F^+\setminus
      F^{++}}Y(f)\geq \sum_{f\in F^+}Y(f)-\sum_{f\in
      F(G)}X_G^{2k}(f)\geq d\rho^2n^{2k}p^{2k-1},\end{equation} using
  the assumed conclusion of Lemma \ref{lem:well-distributed} in $G$,
  the conclusion of Claim \ref{clm:many paths} and our definition of
  $\xi$ \eqref{eq:constants} in the final inequality. By the
  definition of $F^{++}$, we also have that
  \[\sum_{f\in F^+\setminus F^{++}}Y(f)\leq
    2n^{2k-2}p^{2k-1}|F^+\setminus F^{++}|,\]
  and so combining with \eqref{eq:sum of Ys}, we get that $|F^+|\geq
  |F^+\setminus F^{++}|\geq d\rho^2n^2/2$. As $F^+$ is precisely the
  set of edges of $\Gamma$ on $S$, this concludes the proof of
  Lemma~\ref{lem:multicolour-paths} assuming Claim~\ref{clm:many
    paths}. 
\end{proof}
		
\begin{proof}[Proof of Claim~\ref{clm:many paths}]
  Fix some
  $S\subseteq V(G)$ with $|S|=\rho n$ and, for convenience,
  let~$U_0=S$.  Claim~\ref{clm:many paths} will be derived from the
  following claim that identifies the vertices and edges which will be
  used to form our desired rainbow paths. We will find disjoint vertex
  subsets $U_1,\ldots,U_{k-1}\subseteq V(G)\setminus U_0$ together
  with colour allocations satisfying some useful properties. Given
  $i=1,\dots,k-1$ and $u\in U_i$, an edge $e=uv$ is called
  \emph{good} if $v\in U_{i-1}$ and $\chi(e)\neq \lambda_{i-1}(v)$.

  \begin{claim} \label{clm:Ui} There exist disjoint vertex
    subsets $U_1,\ldots,U_{k-1}\subseteq V(G)\setminus U_0$ and
    colour allocations $\lambda_i:U_i\rightarrow \mathbb{N}\cup
    \{\star\}$ for $i=0,\dots,k-1$ such that the following
    conditions hold for $i=1,\dots,k-1$:
    \begin{enumerate}[{\rm(i)}]
    \item \label{item:Uisize} $|U_i|=\gamma_i n$.
    \item \label{item:lambdai} Either $\lambda_i(u)=\star$ for all
      $u\in U_i$ or $\lambda_i:U_i\rightarrow \bbn$ is an
      $\alpha$-heavy colour allocation.
    \item \label{item:good edges} For every $u\in U_{i}$, either
      \begin{enumerate}[{\rm(a)}]
      \item \label{type c} $\lambda_i(u)=c\in \mathbb{N}$ and $u$ is
        incident to at least $\theta p|U_{i-1}|$ good edges coloured
        $c$ by $\chi$; or
      \item \label{type star} $\lambda_i(u)=\star$ and for any
        collection $C\subseteq \bbn$ of at most $2k-1$ colours, we
        have that $u$ is incident to at least $\theta p|U_{i-1}|$ good
        edges $e$ such that $\chi(e)\notin C$.
      \end{enumerate}
    \item \label{item:bad edges} For any $u\in U_i$, we have that for
      any set $C\subseteq \mathbb{N}$ of at most $2k-1$ colours, $u$
      is incident to at most $\tfrac{\theta}{2}p|U_{i-1}|$ edges $uv$
      such that $v\in U_{i-1}$ and $\lambda_{i-1}(v)\in C$.
    \end{enumerate}
  \end{claim}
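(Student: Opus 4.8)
The plan is to build the layers $U_1,\dots,U_{k-1}$ and the allocations $\lambda_0,\dots,\lambda_{k-1}$ one at a time, by induction on $i$, starting from $U_0=S$. Fix $\lambda_0$ at the outset: put $\lambda_0(v):=c$ when $v\in U_0$ is $(\alpha;c)$-heavy for some $c$ (choosing one), and $\lambda_0(v):=\star$ otherwise. The induction carries, besides conditions \ref{item:Uisize}--\ref{item:bad edges}, the extra hypothesis that whenever $\lambda_i$ is heavy the set $U_i$ is contained in the set $W$ produced by Lemma~\ref{lem:heavy}~\ref{item:heavy3} applied to a large ``ambient'' $\alpha$-heavy set, the ambient sets at successive heavy levels being nested and disjoint from the layers already committed. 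By construction of $W$, this makes every $v\in U_i$ incident to $\Omega(pn)$ edges whose colour is not $\lambda_i(v)$, which from the next layer's viewpoint are exactly \emph{good} edges into $U_i$; this is the mechanism that feeds condition \ref{item:good edges} at the following step. Two standing consequences of the hypothesis that $\chi$ is monochromatic $C_{2k}$-free will be used repeatedly: by Theorem~\ref{thm:turan} no colour class of $\chi$ spans as many as $\delta pn^2$ edges (with $\delta$ chosen tiny relative to the constants of \eqref{eq:constants}), whence every $\lambda$-class of an $\alpha$-heavy allocation has size $O(\delta\alpha^{-1}n)$ exactly as in the proof of Lemma~\ref{lem:heavy}; and, for the same reason, no linear-sized $T\subseteq V(G)$ can have a single colour occupying almost all of $G[T]$, since that colour class would then span $\geq\delta pn^2$ edges and hence contain a monochromatic $C_{2k}$.

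For the step from $U_{i-1}$ to $U_i$, first delete $U_0\cup\dots\cup U_{i-1}$ (of total size $<\tfrac13n$ since $\rho<\tfrac14$ and $\sum_j\gamma_j<\tfrac1{12}$) together with the $O(\eps^2n)$ vertices of atypical degree into $U_{i-1}$ or into a union of $2k-1$ of the $\lambda_{i-1}$-classes; these degrees are controlled via Lemma~\ref{lem:RG - properties}~\ref{RG:bad vts} (and via Lemma~\ref{lem:heavy}~\ref{item:heavy1} when $\lambda_{i-1}$ is heavy, which moreover grants property \ref{item:bad edges} to the survivors). Next one shows that, after possibly passing to a further large subset chosen inside the ambient set and using the two standing consequences above, the surviving vertices under consideration each have $\Omega(p|U_{i-1}|)$ good edges into $U_{i-1}$: when $\lambda_{i-1}\equiv\star$ this is automatic since every edge into $U_{i-1}$ is good, and when $\lambda_{i-1}$ is heavy it follows by double-counting the $\Omega(p|U_{i-1}|^2)$ good edges that the inductive hypothesis guarantees are sent from $U_{i-1}$ into its ambient set. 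Having $\Omega(p|U_{i-1}|)$ good edges in hand, call such a vertex $u$ \emph{scattered} if after deleting the $2k-1$ most frequent colours among its good edges into $U_{i-1}$ at least $\theta p|U_{i-1}|$ of them remain, and \emph{focused with colour $c$} otherwise --- in the latter case one colour $c$ carries $\geq\theta p|U_{i-1}|$ good edges, so $u$ is automatically $(\alpha;c)$-heavy since $\theta p|U_{i-1}|=\theta\gamma_{i-1}pn\geq\alpha\,d_G(u)$. Every such $u$ is one or the other; if at least half are scattered, take $U_i$ to be $\gamma_i n$ of them and set $\lambda_i\equiv\star$; otherwise at least half are focused, and take $U_i$ to be a Lemma~\ref{lem:heavy}~\ref{item:heavy3}-set (thinned to $\gamma_i n$ vertices) of an ambient set of focused survivors, with $\lambda_i(u):=c_u$, which is an $\alpha$-heavy allocation. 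Now \ref{item:Uisize}--\ref{item:bad edges} hold, \ref{item:lambdai} because $U_i$ is homogeneous by construction, and the extra hypothesis is preserved.

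All of this presupposes that the required large ambient $\alpha$-heavy sets exist; in the complementary regime, where the maximal $\alpha$-heavy subset of $V(G)$ is itself of sublinear (or merely small linear) size, all but $O(\gamma_0 n)$ vertices are $\alpha$-rainbow and one builds every layer greedily among rainbow vertices with $\lambda_i\equiv\star$ (treating $\lambda_0$ accordingly), the good-edge counts again being automatic. The principal obstacle throughout is the coordination between consecutive layers: one cannot choose $U_{i-1}$ using only the properties it is required to have, but must anticipate the construction of $U_i$ (placing $U_{i-1}$ inside a Lemma~\ref{lem:heavy}~\ref{item:heavy3}-set of an ambient set, keeping the ambient sets nested and disjoint from the committed layers), and the bulk of the work is in verifying that there is always enough room for this and that the constants in \eqref{eq:constants} make all the inequalities go through.
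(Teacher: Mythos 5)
There is a genuine gap, and it is at the very first step: your choice of $\lambda_0$. You fix $\lambda_0(v)=c$ whenever $v\in U_0=S$ is $(\alpha;c)$-heavy, but the claim only ever uses $\lambda_0$ through the notion of \emph{good} edges into $U_0$ (an edge $uv$ with $v\in U_0$ is good iff $\chi(uv)\neq\lambda_0(v)$), and with your choice this can kill the construction outright. Consider the colouring in which every edge from $v\in S$ to $V(G)\setminus S$ receives the colour $f(v)$ for an injective $f$, while all edges inside $S$ and outside $S$ get pairwise distinct fresh colours. Every colour class is a star, so $\chi$ is monochromatic-$C_{2k}$-free and all the properties defining $\cG$ are colouring-independent; each $v\in S$ is $(\alpha;f(v))$-heavy (it has $\approx(1-\rho)pn\geq\alpha d_G(v)$ edges of colour $f(v)$) and heavy in no other colour, so your rule forces $\lambda_0(v)=f(v)$. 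Then \emph{no} edge from $V(G)\setminus U_0$ into $U_0$ is good, and since $U_1$ must be disjoint from $U_0$, condition~\ref{item:good edges} at $i=1$ is unsatisfiable for every choice of $U_1$: with your $\lambda_0$ the claim you are trying to prove is simply false. Note also that your own extra inductive hypothesis (heavy layers sit inside a Lemma~\ref{lem:heavy}~\ref{item:heavy3}-set $W$ of an ambient heavy set, which is what supplies good edges to the next layer) cannot be arranged at level $0$, because $U_0=S$ is prescribed and cannot be re-chosen inside such a $W$; and even applying Lemma~\ref{lem:heavy}~\ref{item:heavy3} to the heavy part of $S$ only yields light edges \emph{within} $S$, which is useless for building $U_1\subseteq V(G)\setminus U_0$. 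The correct move, which the paper makes, is to exploit the freedom in $\lambda_0$ and set $\lambda_0\equiv\star$: then every edge incident to $U_0$ is automatically good, and the analogue of your extra hypothesis (the paper's condition guaranteeing many edges from $U_0$ into the uncommitted vertices avoiding $\lambda_0$) is trivially true at the base.

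Beyond this, your induction step is essentially the paper's argument (candidate pool of vertices with $\Omega(p|U_{i-1}|)$ good edges obtained by double counting, trimming via Lemma~\ref{lem:heavy}~\ref{item:heavy1} to secure condition~\ref{item:bad edges}, the scattered/focused dichotomy giving $\lambda_i\equiv\star$ or an $\alpha$-heavy $\lambda_i$, and Lemma~\ref{lem:heavy}~\ref{item:heavy3} to place $U_i$ so the next layer has a supply of good edges), but two smaller points need care. First, with the constants fixed in~\eqref{eq:constants} your ``at least half scattered / at least half focused'' rule need not leave enough focused vertices for an ambient set of size $8\gamma_i n$ (the pool is only guaranteed to have size about $\tfrac{\gamma_{i-1}}{2}n=10\gamma_i n$, and half of it is $5\gamma_i n<8\gamma_i n$); the paper's threshold ``$|\lambda_i^{-1}(\star)|\geq\gamma_i n$, else the focused part has size at least $9\gamma_i n$'' avoids this. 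Second, you cannot discard ``vertices of atypical degree into a union of $2k-1$ of the $\lambda_{i-1}$-classes'' by a union bound over such unions via Lemma~\ref{lem:RG - properties}~\ref{RG:bad vts}, since the colour palette is unbounded; the per-class bound of Lemma~\ref{lem:heavy}~\ref{item:heavy1} (at most $\alpha pn$ neighbours in each $\lambda_{i-1}^{-1}(c)$, then multiply by $2k-1$) is the argument that works, and is how condition~\ref{item:bad edges} should be derived.
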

  Let us first see how Claim \ref{clm:Ui} implies Claim \ref{clm:many
    paths}. We will prove by induction that for $\ell=1,2,\ldots,k$
  there are at least
  \[\beta_\ell:=\left(\frac{\theta}{4}\right)^{2\ell}p^{2\ell-1}\prod_{j=1}^{\ell}|U_{k-j}|^2\]
  rainbow copies of $P_{2\ell}$ with endpoints in $U_{k-\ell}$. Our
  definition of constants from~\eqref{eq:constants} and the fact that
  $|U_i|=\gamma_i n$ (part \ref{item:Uisize} of Claim \ref{clm:Ui})
  then imply Claim \ref{clm:many paths} by applying the inductive
  statement with $\ell=k$. Now in order to prove the inductive
  statement, we will in fact only count rainbow copies $P$ of
  $P_{2\ell}$ with endpoints $u$, $v\in U_{k-\ell}$ that satisfy further
  the following conditions:
  \begin{itemize}
  \item $\lambda_{k-\ell}(u)=\lambda_{k-\ell}(v)$ only if
    $\lambda_{k-\ell}(u)=\lambda_{k-\ell}(v)=\star$, and
  \item $\lambda_{k-\ell}(u)$, $\lambda_{k-\ell}(v)\notin
    \chi(P):=\{\chi(e):e\in E(P)\}$.
\end{itemize}

  Let us first treat the base case $\ell=1$. By Claim \ref{clm:Ui}
  \ref{item:lambdai}, we have that either $\lambda_{k-1}(u)=\star$ for
  all $u\in U_{k-1}$ or $\lambda_{k-1}:U_{k-1}\rightarrow \bbn$ is an
  $\alpha$-heavy colour allocation. Suppose first that
  $\lambda_{k-1}(u)=\star$ for all $u\in U_{k-1}$. In that case, any
  edge in $G[U_{k-1}]$ gives a valid path $P_2$ and we have
  $(1-\eps)p|U_{k-1}|^2/2>\beta_1$ edges in $U_{k-1}$ by the assumed
  conclusion of Lemma~\ref{lem:RG - properties}~\ref{RG: edge count}
  in $G$.  On the other hand, if $\lambda_{k-1}:U_{k-1}\rightarrow
  \bbn$ is an $\alpha$-heavy colour allocation, then, using that
  $|U_{k-1}|\geq \nu n$ and that there are \emph{no} lexicographic
  copies of $C_{2k}$ for all orderings of its vertices (by assumption,
  otherwise we satisfy outcome \ref{item:lexico}), Lemma \ref{lem:lex
    copies} implies that there are $p|U_{k-1}|^2/2^{8k+1}>\beta_1$
  edges $e=uv$ in $G[U_{k-1}]$ such that $\chi(e)\notin
  \{\lambda(u),\lambda(v)\}$ and $\lambda(u)\neq \lambda(v)$. Each
  such edge gives a valid copy of $P_2$, as required.

  Now for the inductive step, suppose that $2\leq \ell \leq k$ and we
  have $\beta_{\ell-1}$ rainbow copies~$P$ of~$P_{2\ell-2}$ with
  endpoints $u,v\in U_{k-\ell+1}$ such that $\chi(P)\cap
  \{\lambda_{k-\ell+1}(u),\lambda_{k-\ell+1}(v)\}=\emptyset$ and
  $\lambda_{k-\ell+1}(u)=\lambda_{k-\ell+1}(v)$ only if
  $\lambda_{k-\ell+1}(u)=\lambda_{k-\ell+1}(v)=\star$. Fix any such
  copy $P$ with endpoints $u,v\in U_{k-\ell+1}$ and let $C=\chi(P)$.
  Furthermore, let $C'=C$ if $\lambda_{k-\ell+1}(v)=\star$ and
  $C'=C\cup \{\lambda_{k-\ell+1}(v)\}$ if $\lambda_{k-\ell+1}(v)\in
  \bbn$. Now by Claim \ref{clm:Ui} \ref{item:good edges} there are at
  least $\theta p|U_{k-\ell}|$ good edges $e=uw$ with $w\in
  U_{k-\ell}$ such that $\chi(e)\notin C'$. Indeed, if
  $\lambda_{k-\ell+1}(u)=c\in \bbn$, then we apply part \ref{type c}
  of the claim, using that $c\notin C'=\chi(P)\cup
  \{\lambda_{k-\ell+1}(v)\}$ by induction and if
  $\lambda_{k-\ell+1}(u)=\star$, we apply \ref{type star} directly to
  avoid colours in $C'=C$.  As these edges are good, we also have that
  $\lambda_{k-\ell}(w)\neq \chi(e)$. Moreover, by part \ref{item:bad
    edges} of Claim \ref{clm:Ui}, at most $\tfrac{\theta}{2}
  p|U_{k-\ell}|$ of these edges $e=uw$ have $\lambda_{k-\ell}(w)\in
  C'$ and so we have at least $\tfrac{\theta}{2} p|U_{k-\ell}|$
  choices of edge $e=uw$ such that $P\cup \{e\}$ is rainbow,
  $\lambda_{k-\ell}(w)\notin \chi(P\cup \{e\})=C\cup \{\chi(e)\}$ and
  $\lambda_{k-\ell+1}(v)\notin C'' $, where $C''$ is defined to be
  $C''=\chi(P\cup \{e\})$ if $\lambda_{k-\ell}(w)=\star$ and
  $C''=\chi(P\cup \{e\})\cup \{\lambda_{k-\ell}(w)\}$ if
  $\lambda_{k-\ell}(w)\in \bbn$.  Fixing such an edge $e=uw$, we now
  count the number of ways to extend the path $P\cup \{e\}$ to a copy
  of~$P_{2\ell}$ with the desired properties. By Claim \ref{clm:Ui}
  \ref{item:good edges}, we have that there are at least
  $\tfrac{\theta}{2} p|U_{k-\ell}|-1$ (the~$-1$ accounts for the fact
  that we have to avoid $w$) good edges $f=vz$ with $z\in
  U_{k-\ell}\setminus \{w\}$ such that $\chi(f)\notin C''$. Here we
  use that $\lambda_{k-\ell+1}(v)\notin C''$ if we are in case
  \ref{type c} and we apply the conclusion of \ref{type star} directly
  to avoid colour $C''$ edges if $\lambda_{k-\ell+1}(v)=\star$.  As
  these edges $f=vz$ are good, we also have that
  $\lambda_{k-\ell}(z)\neq \chi(f)$.  Moreover, by part \ref{item:bad
    edges} of Claim \ref{clm:Ui}, at most $\tfrac{\theta}{2}
  p|U_{k-\ell}|$ of these edges $f=vz$ have $\lambda_{k-\ell}(z)\in
  C''$ and so we have at least $\tfrac{\theta}{2}
  p|U_{k-\ell}|-1\geq\tfrac{\theta}{4} p|U_{k-\ell}| $ choices of edge
  $f=vz$ such that $P\cup \{e,f\}$ is a rainbow copy of $P_{2\ell}$
  with endpoints $w,z$ such that
  $\lambda_{k-\ell}(w),\lambda_{k-\ell}(z)\notin \chi (P\cup \{e,f\})$
  and $\{\lambda_{k-\ell}(w)\}\cap\{\lambda_{k-\ell}(z)\}\subseteq
  \{\star\}$. Summing over all choices of rainbow copies $P$ of
  $P_{2\ell-2}$ (appealing to induction) and choices of extending
  edges $e$ and $f$, we get at least
  $\beta_{\ell-1}\cdot\tfrac{\theta^2}{8}p^2|U_{k-\ell}|^2\geq
  \beta_\ell$ rainbow copies of $P_{2\ell}$ with the desired
  properties, completing the induction, and the proof of
  Claim~\ref{clm:many paths}.  
  \end{proof}

  It remains to prove Claim~\ref{clm:Ui}.

  \begin{proof}[Proof of Claim~\ref{clm:Ui}]
    We proceed by induction on $i=0,\ldots, k-1$.  At the time of
    fixing $U_i$ and $\lambda_i$ for 
  $i=0,\ldots,k-1$, we will ensure that $U_i$ satisfies conditions
  \ref{item:Uisize}--\ref{item:bad edges} of the claim. For
  $i=0,1,\ldots,k-2$, we will also impose an extra condition that will
  allow the induction to continue. Namely, we will guarantee the
  following extra condition holds:
  \textit{%
    \begin{enumerate}[{\rm(i)}]
      \setcounter{enumi}{4}
    \item \label{item:continue} For $i=0,\ldots,k-2$, fixing
      $Z_i=V(G)\setminus \left(\bigcup_{j=0}^iU_j\right)$, there are at
      least $ \gamma_i pn|U_i|$ edges $e=uz$ such that $u\in U_i$,
      $z\in Z_i$ and $\chi(e)\neq \lambda_i(u)$.
    \end{enumerate}}
  For our base case $i=0$, we let $\lambda_0(u)=\star$ for all $u\in
  U_0$, and check that this choice works.  Note that
  conditions~\ref{item:Uisize}--\ref{item:bad edges} of
  the claim are not required for $i=0$, and hence 
  it is only~\ref{item:continue} that we need
  to check.
  By the assumed conclusion of Lemma~\ref{lem:RG -
    properties}~\ref{RG:edges between subsets}, we have that there are
  at least 
  $(1-\eps)p|U_0||Z_0|\geq \gamma_0 pn|U_0|$ edges $e=uz$ with $u\in
  U_0$ and $z\in Z_0$, using here that we have $(1-\eps)|Z_0|\geq
  \gamma_0 n$ as $|U_0|=\rho n\leq n/4$. Moreover, each such edge
  $e=uz$ certainly has $\chi(e)\neq \lambda_0(u)$ as
  $\lambda_0(u)=\star$. This concludes the base case.
		
  For the induction step, suppose that $1\leq i\leq k-1$ and we have
  $U_0,\ldots, U_{i-1},\gamma_0,\dots,\gamma_{i-1}$ already defined
  satisfying all the conditions 
  of Claim~\ref{clm:Ui} and $Z_{i-1}:=V(G)\setminus
  \left(\bigcup_{j=0}^{i-1}U_j\right)$ such that there is a collection
  $E'\subseteq E(G)$ of at least $\gamma_{i-1} pn|U_{i-1}|$ edges
  $e=uz$ with $u\in U_{i-1}$, $z\in Z_{i-1}$, and $\chi(e)\neq
  \lambda_{i-1}(u)$ (condition \ref{item:continue}).  Now let
  $G'\subseteq G $ be the bipartite subgraph (with parts $U_{i-1}$ and
  $Z_{i-1}$) of $G$ defined by the set of edges $E'$. Let $Z'\subseteq
  Z_{i-1}$ be the set of vertices $z\in Z_{i-1}$ such that
  $d_{G'}(z,U_{i-1})\geq \tfrac{\gamma_{i-1}}{4}p|U_{i-1}|$. We claim
  that $|Z'|\geq \tfrac{5\gamma_{i-1}}{8}n$. Indeed, if this was not
  the case, then we would
  have 
  \begin{multline*}
    \qquad
    |E'|\leq e_{G'}(U_{i-1},Z_{i-1}\setminus Z')+e_{G}(U_{i-1},
    Z')\\
    \leq n\cdot
    \frac{\gamma_{i-1}}{4}p|U_{i-1}|+(1+\eps)p|U_{i-1}|\cdot\frac{5\gamma_{i-1}}{8}n<\gamma_{i-1}
    pn|U_{i-1}|,\qquad
  \end{multline*}
  contradicting our lower bound on the number of edges in $G'$. Here,
  we used the assumed conclusion of Lemma~\ref{lem:RG -
    properties}~\ref{RG:edges between subsets} in~$G$ to upper bound 
  $e_G(U_{i-1},Z')$: if $|U_{i-1}||Z'|\geq6n/\epsilon p$, then we have
  $e_G(U_{i-1},Z')\leq(1+\epsilon)p|U_{i-1}||Z'|<(1+\epsilon)p|U_{i-1}|(5\gamma_{i-1}/8)n$, 
  but if not, we consider a set~$\widetilde Z$ disjoint from~$U_{i-1}$
  with $Z'\subset\widetilde Z$ and $|\widetilde Z|=5\gamma_{i-1}n/8$
  and use that $e_G(U_{i-1},Z')\leq e_G(U_{i-1},\widetilde Z)$. 
  So we indeed have that $|Z'|\geq
  \tfrac{5\gamma_{i-1}}{8}n$ and by the assumed conclusion of Lemma~\ref{lem:heavy}~\ref{item:heavy1}, we have that there is a subset
  $Z''\subset Z'$ such that $|Z''|\geq
  \left(\tfrac{5\gamma_{i-1}}{8}-\alpha\right)n\geq
  \tfrac{\gamma_{i-1}}{2}n$ and for all $c\in \bbn$ and every $z\in
  Z''$, we have $|N_G(z)\cap \lambda_{i-1}^{-1}(c)|\leq \alpha pn$.  Indeed
  this is immediate if $\lambda_{i-1}^{-1}(\star)=U_{i-1}$ (we can
  take $Z''=Z'$) and if this is not the case, then condition
  \ref{item:lambdai} for $i-1$ (which is assumed by induction) implies
  that $\lambda_{i-1}:U_{i-1}\rightarrow \bbn$ is an $\alpha$-heavy
  colour allocation and we can apply Lemma~\ref{lem:heavy}~\ref{item:heavy1}.  Now we aim to find $U_{i}\subseteq Z''$ and we
  remark that this will imply that condition~\ref{item:bad edges}
  is satisfied. Indeed, this follows from the fact that
  $\alpha<\tfrac{\theta}{4k}$ and our condition on vertices in $Z''$.
		
  Now we define $\lambda_i:Z''\rightarrow \bbn\cup \{\star\}$ as
  follows. For $z\in Z''$, if there is some colour $c \in \bbn$ such
  that there are $\theta p |U_{i-1}|$ edges in $G'$ incident to $z$
  that are coloured $c$ by $\chi$, we define $\lambda_{i}(z)=c$. If no
  such colour exists, we set $\lambda_{i}(z)=\star$.  Now if
  $|\lambda_i^{-1}(\star)|\geq \gamma_i n$, we fix $U_i\subseteq
  \lambda_i^{-1}(\star)\subseteq Z''$ with $|U_i|=\gamma_i n$ and
  claim that all the conditions on $U_i$ and $\lambda_i$ (restricted
  to $U_i$) in Claim \ref{clm:Ui} as well as condition
  \ref{item:continue} (if $i\leq k-2$) are satisfied. Indeed,
  conditions \ref{item:Uisize} and \ref{item:lambdai} are immediate
  and we have already established \ref{item:bad edges} as
  $U_i\subseteq Z''$. For \ref{item:good edges}, we use that
  $\theta<\tfrac{\gamma_{i-1}}{8k}$. Indeed, for $u\in U_i$ and any
  set $C\subseteq \bbn$ of size at most $2k-1$, we have that $u$ is
  incident to at most $(2k-1)\theta p |U_{i-1}|$ edges of $G'$ that
  are coloured with colours in~$C$. Hence, as $d_{G'}(u,U_{i-1})\geq
  \tfrac{\gamma_{i-1}}{4} p|U_{i-1}|$ (as $U_i\subseteq Z'$), we have
  that there are at least $\theta p|U_{i-1}|$ edges of $G'$ incident
  to $u$ which avoid the colours in~$C$. As each of these edges $uv\in
  E(G')$ with $v\in U_{i-1}$ has $\chi(e)\neq \lambda_{i-1}(v)$,
  condition \ref{item:good edges} of Claim \ref{clm:Ui} follows. If
  $i\leq k-2$, in order to establish \ref{item:continue}, we appeal to
  the assumed conclusion of Lemma~\ref{lem:RG - properties}~\ref{RG:edges between subsets} in $G$ which gives that
  $e_G(U_i,Z_i)\geq (1-\eps)p|U_i||Z_i|\geq \gamma_i pn|U_i|$ as
  $(1-\eps)|Z_i|\geq \gamma_i n$ because of
  condition~\ref{item:Uisize} on the sizes of $U_0,U_1,\ldots,U_{i}$
  which have  
  been established so far. As any edge $e=uz\in E(G)$ with $u\in U_i$ and
  $z\in Z_i$ has $\chi(e)\neq\lambda_i(u)=\star$, all the edges in~$G$
  between~$U_i$ and~$Z_i$ are valid for condition~\ref{item:continue}
  and we are
  done. 

  Thus it remains to treat the case when $|\lambda_i^{-1}(\star)|\leq
  \gamma_i n$. In this case, we let $U\subseteq Z''\setminus
  \lambda_i^{-1}(\star)$ such that $|U|=\gamma_{k-1} n$ if $i=k-1$
  and $|U|=8\gamma_{i} n$ if $i\leq k-2$, which is possible as
  $\gamma_i<\tfrac{\gamma_{i-1}}{18}$. 
  Now note that every vertex $u\in U$ has at least $\theta p
  |U_{i-1}|\geq2\alpha p n\geq\alpha d_G(u)$ incident edges in colour
  $\lambda_i(u)$ 
  and so~$\lambda_i$ (restricted to~$U$) is an $\alpha$-heavy colour
  allocation. Moreover, we certainly have that $|U|\geq \alpha n$ and
  so the assumed conclusion of Lemma~\ref{lem:heavy}~\ref{item:heavy3} gives some subset $W\subseteq U$ as in that
  lemma. Now if $i=k-1$, we fix $U_i=U$ and if $i\leq k-2$ we fix
  $U_i=W$. Let us check the conditions of Claim \ref{clm:Ui} (and
  condition \ref{item:continue}). By our choice of $U_i$, we certainly
  have that conditions \ref{item:Uisize} and \ref{item:lambdai} (with
  $\lambda_i$ restricted to $U_i$) both hold and we have already
  established condition \ref{item:bad edges} as $U_i\subseteq Z''$. To
  see condition \ref{item:good edges}, note that each $u\in U_i$ is
  incident to at least $\theta p|U_{i-1}|$ edges in $G'$ which are
  coloured $\lambda_i(u)$, by our definition of $\lambda_i$ and the
  fact that $\lambda^{-1}_i(\star)\cap U_i=\emptyset$. Moreover
  all the edges $e=uv\in E(G')$ with $u\in U_i$ are \emph{good} as
  defined just before the statement of Claim~\ref{clm:Ui}, 
  as by the definition of
  $G'$ they have $v\in U_{i-1}$ and $\chi(e)\neq \lambda_{i-1}(v)$.
		
  It remains to establish \ref{item:continue} in the case that $i\leq
  k-2$.
  We defined $U_i=W\subseteq U$ from our application of Lemma~\ref{lem:heavy}~\ref{item:heavy3} and so we have that
  $|U_i|=\tfrac{1}{8}|U|$ and each vertex $u\in U_i$ is incident to at
  least $p|U|/5\geq p|U_{i}|$ edges $e=uv$ with $v\in U\setminus U_i$
  and $\chi(e)\neq \lambda_i(u)$. As $U\setminus U_i\subseteq
  Z_i:=V(G)\setminus \left(\bigcup_{j=0}^iU_j\right)$, summing over all
  $u\in U_i$, we get at least $p|U_i|^2\geq \gamma_i pn|U_i|$ edges
  $e=uz$ with $u\in U_i$, $z\in Z_i$ and $\chi(e)\neq \lambda_i(u)$,
  establishing \ref{item:continue}, completing the induction step and
  concluding the proof of Claim \ref{clm:Ui}.
\end{proof}
	
	
	
	
	

   

	
	
\bibliographystyle{amsplain}
\bibliography{extracted-mrefed}
	

	%
	
	
	
	
\appendix
	
\section{Proof of Lemma~\ref{lem:orient}}
\label{sec:pfoforient}
	
	\def\vG{\vec G}
	\def\vR{\vec R}
	\def\vC{\vec C}
	\def\vK{\vec K}
	\def\vJ{\vec J}
	\def\ve{\vec e}
	
	\def\ora#1{\overrightarrow{#1}}
	\def\cP{\mathcal{P}}  
	\def\cW{\mathcal{W}}

	Let~$\vG$ be an oriented graph, and let~$A$ and~$B\subset
        V(\vG)$ be disjoint. We write~$\vG[A,B]$ for the oriented
        graph with vertex set~$A\cup B$ and arc set $\{\ora{ab}\in
        E(\vG) : a\in A,\,b\in B \}$. Set $e_{\vG}(A,B) :=
        e(\vG[A,B]))$.
        Define the \emph{$p$-density} of $(A,B)$ in $\vG$ as 
	\begin{linenomath}
          \begin{align*}
			d_{\vG,p}(A,B) := \frac{e_{\vG}(A,B)}{p|A||B|}.
          \end{align*}
        \end{linenomath}
	Let us say that
	$\vG[A,B]$ is \emph{$(\delta,p)$}-regular if
	\begin{linenomath} \begin{align*}
			|d_{\vG,p}(X,Y)-d_{\vG,p}(A,B)|\leq\delta
	\end{align*} \end{linenomath}
	for every $X\subset A$ and~$Y\subset B$ with $|X|\geq\delta|A|$
	and $|Y|\geq\delta|B|$.
	
	Given an $n$-vertex oriented graph $\vG$ and $\delta>0$, a
        partition $\{V_0,V_1,\ldots,V_k\}$ of the vertex set of $\vG$
        is called \emph{$(\delta,k)$-equitable} if $|V_0|\leq \delta
        n$ and $|V_1|=\ldots = |V_k|$. Moreover, we say that a
        $(\delta,k)$-equitable partition is
        \emph{$(\delta,\vG,p)$}-regular if for all, but at most
        $\delta k^2$, of the distinct pairs $(i,j)\in [k]^2$ the
        oriented graphs $\vG[V_i,V_j]$ are
        $(\delta,p)$-regular. Finally, say that an $n$-vertex graph
        $G$ with density $p$ is \emph{$(\xi,D,p)$-upper uniform} if we
        have
	\begin{linenomath}\begin{align*}
			e_{G}(X,Y) \leq D p |X| |Y|
	\end{align*} \end{linenomath}
      for all disjoint vertex subsets $X$ and $Y$ with $|X| \geq \xi n$ and $|Y| \geq \xi n$.
	
      Now, we state a well-known version of the Sparse Szemer\'edi
      Regularity Lemma (abbr. SRL) for oriented graphs. This version
      can be easily deduced from its non-oriented
      counterparts~\cite{K97,KR03}.
	
      \begin{theorem}[SRL version for oriented
        graphs]\label{thm:SRL-or}
        For every $\delta,D>0$ and every integer $m\geq 1$ there are
        $\xi>0$ and an integer $M\geq m$ such that every (sufficiently
        large) $(\xi,D,p)$-upper uniform graph $G$ has the following
        property. For any orientation $\vG$ of $G$, there exists a
        $(\delta,\vG,p)$-regular partition into $k+1$ classes with
        $m\leq k \leq M$.
	\end{theorem}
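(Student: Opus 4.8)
To prove Theorem~\ref{thm:SRL-or}, the plan is to deduce it from the ordinary sparse regularity lemma of~\cite{K97,KR03} by encoding the orientation as a $2$-edge-colouring relative to a fixed linear order on the vertices, and applying that lemma in its multicoloured form refining a prescribed partition into intervals; both the multicolour extension and the ``refining a given partition'' extension are routine modifications of~\cite{K97,KR03}, exactly the ``easy deduction'' the surrounding text advertises. Concretely, fix a linear order $v_1<\dots<v_n$ of $V(G)$ once and for all, and given $\delta,D,m$ set $\delta':=\delta/4$, $t:=\lceil 40/\delta\rceil$ and $m':=\max\{m,t,\lceil 80/\delta\rceil\}$ (we may assume $\delta<1$, as otherwise the conclusion is vacuous).

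First I would apply the prepartitioned two-coloured sparse regularity lemma with error $\delta'$, upper-uniformity constant $D$, lower bound $m'$ on the number of parts, and initial partition $\mathcal I=\{I_1,\dots,I_t\}$ into $t$ consecutive intervals of nearly equal size. This outputs $\xi>0$ and an integer $M\ge m'$, which (since $m'\ge m$) serve as the constants claimed by the theorem, and it guarantees that any sufficiently large $(\xi,D,p)$-upper uniform $G$ with any two-colouring $G_1,G_2$ of $E(G)$ admits a partition $\{V_0,V_1,\dots,V_k\}$ with $m'\le k\le M$, $|V_0|\le\delta'n$, $|V_1|=\dots=|V_k|$, each $V_i$ ($i\in[k]$) contained in a single interval, which we call $I_{a(i)}$, and such that all but at most $\delta'k^2$ of the unordered pairs $\{i,j\}\subseteq[k]$ are $(\delta',p)$-regular \emph{simultaneously} in $G_1$ and in $G_2$. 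Given an orientation $\vG$ of such a $G$, I would let $G_1$ (``red'') be the edges $\{v_i,v_j\}$ with $i<j$ and $\ora{v_iv_j}\in E(\vG)$, and $G_2$ (``blue'') the rest; both are $(\xi,D,p)$-upper uniform as subgraphs of $G$, so the lemma applies.

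The crucial observation is that interval-respecting parts convert colour-regularity into oriented regularity. Fix distinct $i,j\in[k]$ and suppose $a(i)<a(j)$, so every vertex of $V_i$ precedes every vertex of $V_j$. Then an arc $\ora{xy}$ with $x\in V_i$, $y\in V_j$ is exactly a red edge, and conversely every red edge between $V_i$ and $V_j$ points from $V_i$ to $V_j$; hence $e_{\vG}(X,Y)=e_{G_1}(X,Y)$ for \emph{all} $X\subseteq V_i$, $Y\subseteq V_j$, so $d_{\vG,p}(X,Y)$ equals the undirected $p$-density of $(X,Y)$ in $G_1$, and $(\delta',p)$-regularity of $(V_i,V_j)$ in $G_1$ is literally the same statement as $(\delta',p)$-regularity of $\vG[V_i,V_j]$, which in turn gives $(\delta,p)$-regularity. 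The case $a(i)>a(j)$ is identical with ``red'' replaced by ``blue'', since every arc from $V_i$ to $V_j$ now runs from a larger to a smaller vertex. Therefore $\vG[V_i,V_j]$ can fail to be $(\delta,p)$-regular only when $a(i)=a(j)$, or when $a(i)\ne a(j)$ but $\{i,j\}$ is not $(\delta',p)$-regular in both colours.

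It then remains to fit the exceptions under the budget of $\delta k^2$ ordered pairs. The colour-irregular unordered pairs number at most $\delta'k^2$, contributing at most $2\delta'k^2=\delta k^2/2$ ordered pairs. For the same-interval pairs: if $I_\alpha$ contains $c_\alpha$ of the parts $V_1,\dots,V_k$, then since $|I_\alpha|\le 2n/t$ and each $V_i$ has size at least $(1-\delta')n/k\ge n/(2k)$ we get $c_\alpha\le 4k/t$; as $\sum_\alpha c_\alpha=k$, the number of ordered same-interval pairs is at most $\sum_\alpha c_\alpha^2\le k\cdot\max_\alpha c_\alpha\le 4k^2/t\le \delta k^2/10$. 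Hence fewer than $\delta k^2$ ordered pairs are bad, the partition is $(\delta,k)$-equitable (as $|V_0|\le\delta'n\le\delta n$), and $m\le k\le M$, so it is $(\delta,\vG,p)$-regular, as required. I expect no genuine obstacle here; the only point needing care — the ``main obstacle'', such as it is — is the interplay between the linear-order colouring and the interval prepartition, namely ensuring that \emph{every} arc joining two parts in distinct intervals is monochromatic and correctly oriented (this is precisely what the interval refinement buys us), and tuning $t$ and $\delta'$ so that the same-interval pairs together with the colour-irregular pairs stay below $\delta k^2$.
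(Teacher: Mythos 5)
Your proposal is correct, but it is worth noting that the paper itself offers no argument for Theorem~\ref{thm:SRL-or} at all: it is stated as something that ``can be easily deduced'' from the undirected sparse regularity lemma of~\cite{K97,KR03}, so you are supplying a deduction where the paper only gestures at one. Your route --- encode the orientation as a two-colouring of $E(G)$ relative to a fixed linear order, and apply the multicolour, prepartitioned sparse regularity lemma with an initial partition into $t\approx 40/\delta$ intervals --- is sound, and the one point that genuinely needs the interval refinement is exactly the one you isolate: for parts $V_i\subseteq I_{a(i)}$, $V_j\subseteq I_{a(j)}$ with $a(i)\neq a(j)$, every edge between them has its direction determined by which interval comes first, so the arcs from $V_i$ to $V_j$ coincide with one colour class and colour-regularity literally becomes oriented regularity (this equivalence fails without the interval prepartition, since the set of ordered pairs respecting the linear order is not a product set inside $V_i\times V_j$). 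The bookkeeping also checks out: with $\delta'=\delta/4$ the colour-irregular pairs contribute at most $\delta k^2/2$ ordered pairs, the same-interval pairs at most $4k^2/t\leq \delta k^2/10$, and equitability and $m\leq k\leq M$ are inherited from the auxiliary lemma. The only external ingredients are the simultaneous (two-graph) and prepartitioned variants of the sparse regularity lemma for $(\xi,D,p)$-upper uniform graphs, and these are indeed routine modifications of~\cite{K97,KR03}, as you say; an equally standard alternative, which avoids the linear order and the interval trick altogether, is to rerun the energy-increment proof directly over ordered pairs of parts, but your reduction is perfectly adequate for the statement as used in Appendix~\ref{sec:pfoforient}.
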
 
	
	Analogous to the non-oriented version of the (sparse)
        regularity method, we define the `reduced' (dense) structure
        associated with a regular partition. Let $\cP =
        \{V_0,\ldots,V_k\}$ be a $(\delta,\vG ,p)$-regular partition
        and let $\theta>0$ be a parameter. The reduced digraph $\vR =
        \vR(\cP,\theta;\delta,\vG,p)$ associated with $\cP$ has vertex
        set $[k]$ and for every distinct pair $(i,j)\in [k]^2$ we draw
        the arc $\ora{ij}$ in $\vR$ if and only if $\vG[V_i,V_j]$ is
        $(\delta,p)$-regular and $d_{\vG,p}(V_i,V_j) \geq \theta$.
	
	Let us now define graphs of a certain type whose appearance
        in~$G$ implies the existence of many cycles in~$G$, which, in
        turn, can be made to imply the existence of many copies of the
        orientations~$\vec C_\ell$ of~$C_\ell$ that we are after in
        Lemma~\ref{lem:orient}.  Let~$J$ be a graph with a given
        vertex partition $V(J)=\bigcup_{0\leq i<\ell}U_i$, with
        $|U_i|=u$ for every $i$.
	In what follows, the indices of the sets~$U_i$ will be
        considered modulo~$\ell$.  We further assume that
        $E(J)=\bigcup_{0\leq i<\ell}E(U_{i-1},U_i)$, that is, the
        edges of~$J$ connect vertices in consecutive sets~$U_i$.
        Thus~$J$ has a `cyclic' structure.  Finally, assume that
        $J[U_{i-1},U_i]$ is $(\delta,p)$-regular\footnote{Here, we
          refer to the undirected sparse-regular notion.} for every
        $0\leq i<\ell$ and that $d_p(U_{i-1},U_i)\geq\theta$. We then
        say that~$J$ is a \emph{$(C_{\ell},\theta,\delta,u,p)$-regular
          system}.  A crucial fact is that `large'
        $(C_{\ell},\theta,\delta,u,p)$-regular systems that occur as
        subgraphs of random graphs contain many copies of~$C_\ell$, as
        long as the involved parameters are appropriate.  In order to
        state this more precisely, we denote by $\iota_{\ell}(J)$ the
        number of transversal\footnote{That is, contains one vertex
          from each vertex class of~$J$.} copies of $C_{\ell}$ in
        $J$. The following lemma is a straightforward consequence of
        Theorem 1.6
        in~\cite{CGSS14}.
	\begin{lemma}[K{\L}R conjecture for cycles in random graphs:
          counting version]
          \label{lem:cycles_in_J}
          Let $p=p(n)=\omega(n^{-1+1/(\ell-1)})$ where
          $3\leq\ell\in\bbn$.  For any~$\theta>0$ there are~$\zeta>0$
          and~$\delta>0$ such that for any $\mu >0$ a.a.s.\
          $\bfg\sim\bfg(n,p)$ has the following property. For every
          $u\geq \mu n$ and every
          $(C_{\ell},\theta,\delta,u,p)$-regular system $J \, \subset
          \, \bfg$ we have that $\iota_{\ell}(J) \geq \zeta \, (\mu p
          n)^{\ell} $.
	\end{lemma}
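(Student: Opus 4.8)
The plan is to obtain Lemma~\ref{lem:cycles_in_J} as an immediate consequence of the counting form of the K{\L}R conjecture, established in full generality by Conlon, Gowers, Samotij and Schacht~\cite{CGSS14}. The statement I would invoke, specialised to what is needed here, reads as follows: given a graph $H$ and a real $\beta>0$, there exist $\zeta_0,\delta_0>0$ and $C_0>0$ such that whenever $p=p(n)\geq C_0\, n^{-1/m_2(H)}$, with probability $1-o(1)$ the random graph $\bfg\sim\bfg(n,p)$ has the following property: for every subgraph $G\subseteq\bfg$ together with pairwise disjoint sets $(V_x)_{x\in V(H)}\subseteq V(\bfg)$ of a common size $m$ such that $E(G)$ joins only pairs $V_x,V_y$ with $xy\in E(H)$, and such that each $G[V_x,V_y]$ is $(\delta_0,p)$-regular with $d_p(V_x,V_y)\geq\beta$, the graph $G$ contains at least $\zeta_0\, p^{e(H)}\,m^{v(H)}$ labelled transversal copies of $H$. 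This property is uniform over all admissible values of $m$, so no union bound over part sizes is required; and since for $p$ bounded away from $0$ it is just the classical regularity/counting lemma, we may freely assume $p=o(1)$.

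First I would apply this with $H=C_\ell$ and $\beta:=\theta$. Every proper subgraph of $C_\ell$ is a disjoint union of paths, hence has $2$-density at most $1$, so the maximum in the definition of $m_2$ is attained by $C_\ell$ itself and $m_2(C_\ell)=(\ell-1)/(\ell-2)$; thus $n^{-1/m_2(C_\ell)}=n^{-1+1/(\ell-1)}$, and the hypothesis $p=\omega\big(n^{-1+1/(\ell-1)}\big)$ forces $p\geq C_0\,n^{-1/m_2(C_\ell)}$ for all large $n$, whatever $C_0$ is. I would set $\delta:=\delta_0$ and $\zeta:=\zeta_0$; these depend only on $\theta$ and $\ell$, and in particular not on $\mu$, as the statement demands.

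It then remains only to match the configurations. A $(C_\ell,\theta,\delta,u,p)$-regular system $J\subset\bfg$ is exactly an instance of the structure above: its classes $U_0,\dots,U_{\ell-1}$ are pairwise disjoint subsets of $V(\bfg)$ of common size $u$; its edges join only consecutive classes $U_{i-1},U_i$ (indices modulo $\ell$), which is precisely the edge set of $C_\ell$; and each $J[U_{i-1},U_i]$ is $(\delta,p)$-regular with $d_p(U_{i-1},U_i)\geq\theta=\beta$. (That the densities are also at most $1+o(1)$ after normalising by $p$ is automatic, since $J\subseteq\bfg$ and $u^2=\omega(n/p)$, so there is no conflict with any density band used in~\cite{CGSS14}.) Hence, on the a.a.s.\ event above and using $v(C_\ell)=e(C_\ell)=\ell$, every such $J$ satisfies $\iota_\ell(J)\geq\zeta\, p^{\ell}u^{\ell}$; and for any $\mu>0$ and $u\geq\mu n$ this gives $\iota_\ell(J)\geq\zeta\, p^\ell u^\ell\geq\zeta\,(\mu p n)^\ell$, which is the assertion of the lemma (and is a genuinely large quantity, as $\mu p n\to\infty$).

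The one point requiring care is purely bibliographic: identifying the precise counting statement inside~\cite{CGSS14} --- their Theorem~1.6 --- and checking that its hypotheses match the definition of a regular system used here (common part sizes, a one-sided lower bound on the pairwise densities, the sparse-regularity notion, and uniformity of the conclusion over all part sizes, which is what allows $\mu$ to be chosen after $\zeta$ and $\delta$). I do not expect any real obstacle; the content of the lemma is entirely contained in~\cite{CGSS14} and the above is just the translation.
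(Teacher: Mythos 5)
Your proposal is correct and coincides with the paper's treatment: the paper also obtains Lemma~\ref{lem:cycles_in_J} as a direct consequence of the counting version of the K{\L}R conjecture, namely Theorem~1.6 of~\cite{CGSS14}, via exactly the translation you describe ($H=C_\ell$, $m_2(C_\ell)=(\ell-1)/(\ell-2)$ so that $p=\omega\big(n^{-1+1/(\ell-1)}\big)$ meets the required threshold, $\zeta$ and $\delta$ depending only on $\theta$ and $\ell$, and $u\geq\mu n$ yielding $\iota_\ell(J)\geq\zeta\,p^\ell u^\ell\geq\zeta(\mu pn)^\ell$). The paper gives no further detail beyond this citation, so your write-up is, if anything, more explicit than the original.
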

	
	Now, we are able to prove Lemma~\ref{lem:orient}, which we
        remind here for convenience. Given $3\leq\ell\in \bbn$ and
        $\epsilon>0$, we want to show that the following holds for
        $p=\omega\big(n^{-1+1/(\ell-1)}\big)$: there is~$\eta>0$ such
        that for every~$\nu>0$ a.a.s.\ $\bfg\sim \bfg(n,p)$ has the
        property that for every $W\subseteq V(\bfg)$ with $|W|\geq \nu
        n$, every subgraph~$G\subseteq \bfg[W]$ with
        \begin{equation}
          \label{eq:1b}
          e(G)\geq\left(\pi(K_{2^{\ell-1}})+\eps\right)
          p\binom{|W|}2      
        \end{equation}
        has the $\big(C_\ell,\eta(\nu pn\big)^\ell)$-orientation property.
        
	\begin{proof}[Proof of Lemma~\ref{lem:orient}]
          Of course, we can assume that $0<\eps < 1$.
          Given the parameters $\ell \geq 3$, $\eps>0$ and $\nu>0$,
          and setting $\eps' := \frac{\eps}{72}$, we choose the
          parameters $\theta = \theta(\eps) := \eps'$ and $D = D(\eps)
          := 1 + \eps'$. Applying Lemma~\ref{lem:cycles_in_J} we set
          $\delta = \delta(\ell,\eps) :=
          \min\{\delta_{\ref{lem:cycles_in_J}}(\ell,\theta) , \eps'
          \}$ and $\zeta:=\zeta(\ell,\eps) :=
          \zeta_{\ref{lem:cycles_in_J}}(\ell,\theta)$. Applying
          Theorem~\ref{thm:SRL-or} for $m = m(\ell,\eps) := \max{\{
            N(\ell,\eps),1/{\eps}' \} }$\footnote{ Recalling the
            definition of $\pi(H)$ from~\eqref{eq:turan_density_def},
            we define $N(\ell,\eps)$ large enough such that every
            graph $H$ with $v(H) \geq N(\ell, \eps)$ vertices and
            $e(H) \geq ( \pi(K_{2^{\ell-1}}) + \eps/2 )
            \binom{v(H)}{2}$ contains a copy of $K_{2^{\ell-1}}$. },
          we set $\xi = \xi(\ell,\eps) :=
          \xi_{\ref{thm:SRL-or}}(\delta,D,m)$ and $M = M(\ell,\eps) :=
          M_{\ref{thm:SRL-or}}(\delta,D,m)$. Now, we define the
          parameters $\mu := \frac{1-\delta}{M} \, \nu$ and
          \begin{linenomath}
            \begin{align*} 
              \eta = \eta(\ell,\eps) \:= \zeta \, \left(\frac{1-\delta}{M} \right)^{\ell}.
            \end{align*}
          \end{linenomath}
          Of course, this $\eta$ was defined in order to be our
          desired parameter in Lemma~\ref{lem:orient}.
		
          Let $p = \omega(n^{-1+1/(\ell-1)})$. We fix any outcome of
          $\bfg$ which satisfies the following properties. Set $\mu'
          := \min\{ \xi, \mu
          \}$. 
          \begin{itemize}
          \item[(i)] For each $U \in \binom{n}{ \geq \mu' n }$ we have
            that
            \begin{linenomath} \begin{align*} e_{\bfg}(U) = (1 \pm
                \eps') p |U|^2/2.
            \end{align*} \end{linenomath}
        \item[(ii)] For every disjoint sets $U,W \in \binom{n}{ \geq
            \mu' n }$ we have that
          \begin{linenomath} \begin{align*} e_{\bfg}(U,W) = (1 \pm
              \eps') p |U||W|.
          \end{align*}\end{linenomath}
      \item[(iii)] For every $u\geq \mu n$ and every
        $(C_{\ell},\theta,\delta,u,p)$-regular system $J\subseteq
        \bfg$ we have that
        \begin{linenomath} \begin{align*} \iota_{C_{\ell}}(J) \geq
            \zeta \, (\mu p n)^{\ell}.
        \end{align*} \end{linenomath}
    \end{itemize}
    Note that by Lemma~\ref{lem:RG - properties} ($1$) and ($3$), and
    Lemma~\ref{lem:cycles_in_J} applied to these parameters (together
    with a simple union bound), we conclude that these properties
    ($i$), ($ii$) and $(iii)$ a.a.s.\ occur. In view of this, the rest
    of the proof is deterministic (and we take $n$ sufficiently large
    as necessary).
		
    Thus, fix $W \in \binom{V(\bfg)}{\geq \nu n}$ and $G\subseteq
    \bfg[W]$ with
    \begin{linenomath}
      \begin{align*}
        e(G) \geq (\pi(\ell) + \eps) p \binom{|W|}{2}.
      \end{align*}
    \end{linenomath}
    where $\pi(\ell) := \pi\left(K_{2^{\ell-1}}\right)$.
    Our aim is to show that $G$ has the $(C_{\ell}, \eta(\nu
    pn)^{\ell})$-property. Next, fix an arbitrary orientation of $\vG$
    and an acyclic orientation $\vC_{\ell}$ of $C_{\ell}$. We want to
    show that $\vG$ contains at least $\eta (\nu pn)^{\ell}$ copies of
    $\vC_{\ell}$. Note that $G$ is $(\xi,D,p)$-upper uniform, more
    precisely $G$ is $(\mu',D,p)$-upper uniform because $\bfg$
    satisfies such property by ($ii$) and $G\subseteq \bfg$. Then, we
    apply Theorem~\ref{thm:SRL-or} (with our choice of parameters) to
    conclude that there exists a $(\delta, \vG,p)$-regular partition
    $\cW = \{ W_0,W_1,\ldots,W_k \}$ of $W$ with $k\in [m,M]$. Now,
    let $\vR = \vR(\cW,\theta;\delta,\vG,p)$ be the reduced digraph
    associated with $\cW$. Note that $|W_i| \geq un$ for each $i\in
    [k]$, where $u := \frac{|W|-|W_0|}{k}$. Clearly, $u\geq
    \frac{(1-\delta)\nu n}{M} = \mu n$.
		
    The key observation is the following `transference' claim.
    \begin{claim}[Edge-density Inheritance]
      \label{claim:inh}
      Let $\vG$ and $\vR$ be as above. Then,
      \begin{linenomath} \begin{align}\label{eq:transfer-dense} e(R)
          \geq (\pi(\ell) + \eps/2) \binom{k}{2}.
      \end{align} \end{linenomath} 
    In particular, $R$ contains a copy of $K_{2^{\ell-1}}$. 	
  \end{claim}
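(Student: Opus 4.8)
The plan is to prove \eqref{eq:transfer-dense} by a transference/counting argument: bound $e(G)$ from above by grouping its edges according to which parts $W_i$ of the regular partition $\cW=\{W_0,\dots,W_k\}$ they join, and then isolate $e(R)$. Throughout write $u:=(|W|-|W_0|)/k$, so that $|W_i|=u\ge\mu n\ge\mu' n$ for every $i\in[k]$, and recall $k\ge m\ge\max\{N(\ell,\eps),1/\eps'\}$, $\delta\le\eps'$, $\theta=\eps'$, $D=1+\eps'$, and $|W_0|\le\delta|W|$.

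First I would discard the edges that do not lie in \emph{regular, high-$p$-density} pairs. \textbf{(a)} Edges meeting $W_0$: enlarging $W_0$ to a set $\hat W_0\subseteq W$ with $|\hat W_0|=\max\{|W_0|,\mu' n\}$, one checks $|\hat W_0|\le2\eps'|W|$ and $|W\setminus\hat W_0|\ge\mu' n$, so by properties (i)--(ii) there are at most $e_{\bfg}(\hat W_0)+e_{\bfg}(\hat W_0,W\setminus\hat W_0)\le3p|\hat W_0|\,|W|\le6\eps' p|W|^2$ of them. \textbf{(b)} Edges inside the parts $W_1,\dots,W_k$: at most $\sum_{i\in[k]}e_{\bfg}(W_i)\le(1+\eps')pku^2/2\le\eps' p|W|^2$, using $ku^2\le|W|^2/k\le\eps'|W|^2$. \textbf{(c)} Edges in unordered pairs $\{i,j\}\subseteq[k]$ for which $\vG[W_i,W_j]$ or $\vG[W_j,W_i]$ is not $(\delta,p)$-regular: there are at most $\delta k^2$ such pairs, each carrying at most $Dpu^2$ edges (since $G\subseteq\bfg$ is $(\mu',D,p)$-upper uniform by property (ii)), for a total of at most $\delta D p(ku)^2\le2\eps' p|W|^2$. \textbf{(d)} Edges in pairs $\{i,j\}$ not of type (c) that are \emph{not} edges of $R$: then both $\vG[W_i,W_j]$ and $\vG[W_j,W_i]$ are $(\delta,p)$-regular of $p$-density $<\theta$, so each such pair carries $<2\theta pu^2$ edges, for a total of at most $\theta p(ku)^2\le\eps' p|W|^2$. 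The remaining pairs $\{i,j\}$ are exactly the edges of $R$, and they contribute at most $e(R)\,Dpu^2$ altogether.

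Adding these bounds gives $(\pi(\ell)+\eps)p\binom{|W|}{2}\le e(G)\le e(R)(1+\eps')pu^2+c\,\eps' p|W|^2$ for an absolute constant $c$. Combining this with $p|W|^2\le3p\binom{|W|}{2}$ and the elementary two-sided estimate $(1-3\eps')\binom{|W|}{2}\le u^2\binom{k}{2}\le(1+2\eps')\binom{|W|}{2}$ (which follows from $|W_0|\le\eps'|W|$ and $k\ge1/\eps'$), one rearranges to
\[
  e(R)\ \ge\ \bigl(\pi(\ell)+\eps-O(\eps')\bigr)\binom{k}{2}\ \ge\ \Bigl(\pi(\ell)+\tfrac{\eps}{2}\Bigr)\binom{k}{2},
\]
the value $\eps'=\eps/72$ having been chosen precisely so that the accumulated slack still leaves the required $\eps/2$. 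This is \eqref{eq:transfer-dense}. For the final assertion, $R$ is then a graph on $k\ge m\ge N(\ell,\eps)$ vertices with $e(R)\ge\bigl(\pi(K_{2^{\ell-1}})+\eps/2\bigr)\binom{k}{2}$, so by the defining property of $N(\ell,\eps)$ (the footnoted Erd\H{o}s--Stone/supersaturation consequence) $R$ contains a copy of $K_{2^{\ell-1}}$.

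I expect the main obstacle to be purely organisational: the treatment of the exceptional set $W_0$, where the regularity structure gives no information and one must instead invoke the quasi-randomness of $\bfg$ encoded in properties (i)--(ii), together with keeping the bookkeeping between $\binom{|W|}{2}$ and $u^2\binom{k}{2}$ honest so that the constant $\eps'=\eps/72$ indeed suffices. There is no conceptual difficulty beyond the transference philosophy.
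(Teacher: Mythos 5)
Your proposal is correct and follows essentially the same route as the paper: the identical decomposition of $e(G)$ into edges meeting $W_0$, edges inside parts, edges in irregular pairs, edges in regular low-$p$-density pairs, and edges in $R$-pairs, bounded via the quasi-randomness properties (i)--(ii) and then compared with the lower bound on $e(G)$ (the paper phrases this as a contradiction and handles the $W_0$-edges by writing them as $e(\bfg[W])-e(\bfg[W\setminus W_0])$ rather than by enlarging $W_0$, but these are cosmetic differences). Your explicit constants are marginally loose (the accumulated loss comes to about $38\eps'$ against the available $36\eps'$), but tightening the trivial estimate $|\hat W_0|\le\eps'|W|$ in step (a) already restores the required margin, so this is bookkeeping rather than a gap.
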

  \begin{proof}
    First, note that the second part of the claim is a straightforward
    consequence of \eqref{eq:transfer-dense} together with the fact
    that $k$ is large enough, specifically $k \geq N(\ell,\eps)$.	
    Thus, in order to show \eqref{eq:transfer-dense}, we proceed by
    contradiction. Suppose that $e(R) < (\pi(\ell) + \eps/2)
    \binom{k}{2}$ and set $\tilde{n} := |W|$.
    Then, since $G$ is $(\mu,D,p)$-upper uniform and $\mu n \leq |W_i|
    \leq \tilde{n}/k $ for $i\in [k]$, we obtain that
    \begin{linenomath}
      \begin{align*}
        \sum_{ij\in R}{e_G(W_i,W_j)} & \leq  \sum_{ij\in R}{(1+\eps')p|W_i||W_j|} \\
                                     & \leq  (1+\eps')p \tilde{n}^2/k^2 \times e(R) \\
                                     & < (1+\eps')p \tilde{n}^2/k^2 \times (\pi(\ell) + \eps/2) k^2/2  \\
                                     & \leq (\pi(\ell) + \eps/2 + 2
                                       \eps') p\tilde{n}^2/2.
      \end{align*}
    \end{linenomath}
    On the other hand,
    \begin{linenomath}
      \begin{align}
         \sum_{ij\notin R}{e(G[W_i,W_j])} & = \sum_{ij\in \binom{k}{2}}{(e_{\vG}(W_i,W_j) + e_{\vG}(W_j,W_i)) \mathds{1}[{ij\notin R}}] \nonumber \\
        & \leq \sum_{ij\in \binom{k}{2}}{e_{\vG}(W_i,W_j) \mathds{1}[{\ora{ij} \notin \vR }] + e_{\vG}(W_j,W_i) \mathds{1}[{\ora{ji} \notin \vR } }] \nonumber \\
        & \leq  \sum_{ij\in \binom{k}{2}}{e_{\vG}(W_i,W_j) \mathds{1}[{d_{\vG,p}(W_i,W_j) < \theta }] + e_{\vG}(W_j,W_i) \mathds{1}[{d_{\vG,p}(W_j,W_i) < \theta}}]\nonumber \\
        & \quad + \sum_{ij\in \binom{k}{2}}{ e_{\vG}(W_i,W_j)
           \mathds{1}[{\vG[W_i,W_j] \mbox{ is not $(\delta,p)$-regular}
           }} ] \nonumber
        \\
        & \quad + \quad e_{\vG}(W_j,W_i) \mathds{1}[{ \vG[W_j,W_i] \mbox{ is not $(\delta,p)$-regular}  }] \nonumber \\
        & \leq k^2 \theta p \tilde{n}^2/k^2 \nonumber \\
        & \quad + 2 (1+\eps') p\tilde{n}^2/k^2 \sum_{ij\in
           \binom{k}{2}}{ \mathds{1}[{\vG[W_i,W_j] \mbox{ or }  \vG[W_j,W_i] \mbox{ is not $(\delta,p)$-regular} } }  ] \nonumber \\
        & \leq \theta p\tilde{n}^2 + 2 (1+\eps') \delta  p\tilde{n}^2 
           \leq (\theta + 4\delta) p\tilde{n}^2\nonumber \\
        & \leq 10 \eps' \frac{p\tilde{n}^2}{2}.
      \end{align} \end{linenomath} 
    So, summing both estimations, we are showed that $e(G[\cW \setminus \{W_0\}]) \leq (\pi(\ell) + \eps/2 + 12 \eps') p\tilde{n}^2/2$. 
    Finally, due to property ($i$) of $\bfg$, we conclude that
    \begin{linenomath}
      \begin{align*}
      e(G) & = e(G[\cW\setminus\{W_0\}]) + \sum_{i=1}^{k}{e(G[W_i])} +
             (  e(G[W_0]) + e(G[W_0,W\setminus W_0]) ) \\
           & \leq e(G[\cW\setminus\{W_0\}]) +
             \sum_{i=1}^{k}{e(\bfg[W_i])} + (  e(\bfg[W_0]) + e(\bfg[W_0,W\setminus W_0]) ) \\
           &  =  e(G[\cW\setminus\{W_0\}]) +
             \sum_{i=1}^{k}{e(\bfg[W_i])} + (e(\bfg[W]) - e(\bfg[W-W_0]) )\\
           & \leq (\pi(\ell) + \eps/2 + 12 \eps') \frac{p\tilde{n}^2}{2} + k (1+ \eps') \frac{ p \tilde{n}^2}{2 k^2} + \left((1+\eps') \frac{p \tilde{n}^2}{2} - (1-\eps') (1-\delta)^2 \frac{ p \tilde{n}^2}{2} \right) \\
           & \leq  (\pi(\ell) + \eps/2 + 2/m + 16 \eps') \frac{p\tilde{n}^2}{2} \\
           & \leq (\pi(\ell) + \eps/2 + 18 \eps' ) \frac{p\tilde{n}^2}{2} \\
           & \leq (\pi(\ell) + 3 \eps/4) \frac{p\tilde{n}^2}{2},		
      \end{align*}
    \end{linenomath}
    which is the desired contraction.
  \end{proof}
		
  By Claim~\ref{claim:inh}, we conclude that $\vR$ contains an
  orientation $\vK_{2^{\ell-1}}$ of $K_{2^{\ell-1}}$.
  Now we show that we can find a copy of any acyclic orientation of
  $C_{\ell}$ in $\vK_{2^{\ell-1}}$. To see this, it is enough to show
  that any orientation of $K_{2^{\ell-1}}$ contains a transitive
  tournament with $\ell$ vertices (denoted as $TK_{\ell}$), since all
  acyclic orientations of $C_{\ell}$ lie inside $TK_{\ell}$. Indeed,
  we proceed by induction over $\ell\geq 1$. The base case $\ell=1$ is
  trivial. Suppose that it holds for $\ell-1$, and pick a vertex in
  $V(\vK_{2^{\ell-1}})$, say $v$. By the pigeonhole principle we can
  assume (w.l.o.g.) that $|N^{\rm in}(v)| \geq 2^{\ell-2}$. Thus, by
  the induction hypothesis, one can find a copy of $TK_{\ell-1}$
  within $N^{\rm in}(v)$. Then, by connecting this copy with the
  vertex $v$, we build the desired copy of $TK_{\ell}$.
		       
  Note that such $\vC_{\ell}$ in $\vR$ define (by lifting) a
  $(C_{\ell}, \theta, \delta,u,p)$-regular system $J \subseteq \bfg$
  with a natural orientation $\vJ \subseteq \vG$ of $J$ which obeys
  the behaviour of $\vC_{\ell}$.
		
  Finally, by property ($iii$), we conclude that
  \begin{linenomath}
    \begin{align*}
      \iota_{C_{\ell}}(J) \geq \zeta (\mu p n)^{\ell} =
      \frac{\zeta (1-\delta)^\ell \nu^{\ell}}{M^\ell} (pn)^{\ell} =
      \eta (\nu pn)^{\ell}.
    \end{align*}
  \end{linenomath}     	  
  Therefore, since each transversal $\ell$-cycle in $J$ induces a copy
  of $\vC_{\ell}$ in $\vJ \subseteq \vG$, the proof is complete.
\end{proof}

\end{document}